\theoremstyle{plain}
\newtheorem{theorem}{Theorem}[section]
\newtheorem{corollary}[theorem]{Corollary}
\newtheorem{lemma}[theorem]{Lemma}
\newtheorem{proposition}[theorem]{Proposition}
\theoremstyle{definition}
\newtheorem{assumption}[theorem]{Assumption}
\newtheorem{definition}[theorem]{Definition}
\theoremstyle{remark}
\newtheorem{remark}{Remark}
\def\bs{}
\newcommand\R{\mathbb{R}}
\newcommand\N{\mathbb{N}}
\newcommand\Ra{R}
\renewcommand\boldsymbol{}
\newcommand\ya{y}
\newcommand\Aa{A}
\newcommand\ua{u}
\newcommand\calW{\mathcal{W}}
\newcommand{\SO}[1]{\operatorname{SO}(#1)}
\newcommand\id{I}
\newcommand\sym{\operatorname{sym}}
\newcommand\skw{\operatorname{skw}}
\newcommand\transpose{\textrm{t}}
\newcommand\meas{\operatorname{meas}}
\newcommand\esssup{\mathop{\operatorname{ess\,sup}}}
\newcommand\ud{\,\mathrm{d}}
 \newcommand\dist{\operatorname{dist}}
\newcommand{\e}{\varepsilon}
\newcommand{\eh}{{\varepsilon{\scriptstyle(h)}}}
\newcommand\eps{\varepsilon}
\newcommand\diver{\mathop{\operatorname{div}}}
\DeclareMathOperator{\inte}{int}
\newcommand{\step}[1]{\noindent \textbf{Step #1.}}
\newcommand\loc{\textrm{loc}}
\newcommand\ee{e}
\title{On the general homogenization of von K\'arm\'an plate equations from $3D$ nonlinear elasticity}
\date{\today}
\author{Igor Vel\v{c}i\'c\\
University of Zagreb, Faculty of Electrical Engineering and Computing,\\
  Unska 3, 10000 Zagreb, Croatia,\\
igor.velcic@fer.hr}
\begin{document}

\maketitle

\begin{abstract}
Starting from $3D$ elasticity equations we derive the model of the homogenized von K\'arm\'an plate by means of $\Gamma$-convergence. This generalizes the recent results, where the material oscillations were assumed to be periodic.
\vspace{10pt}

 \noindent {\bf Keywords:}
 elasticity, dimension reduction, homogenization, von K\'arm\'an plate model.

\end{abstract}

\tableofcontents[1]

\section{Introduction} \label{sectionprvi}
This paper is about derivation of homogenized von K\'arm\'an plate equations, starting from $3D$ elasticity by means of $\Gamma$-convergence. We do not presuppose any kind of periodicity, but work in a general framework.
There is  vast literature on deriving plate equations from $3D$ elasticity. For the approach using formal asymptotic expansion see \cite{Ciarletplates} and the references therein.
The first  work on deriving the plate models by means of $\Gamma$-convergence was \cite{LDRa95} where the authors derived the membrane plate model. It was well known that the obtained models depend on the assumption what is the relation of the external loads (i.e., the energy) with respect to the thickness of the body $h$. Higher ordered models (such as bending and von K\'arm\'an plate models) are also derived by means of $\Gamma$-convergence (see \cite{FJM-02,FJM-06}). The key mathematical ingredient in these cases was the theorem on geometric rigidity.

In \cite{Braides-Fonseca-Francfort-00} (see also \cite{Babadjian-Baia-06}) the influence of the different inhomogeneities in the combination with dimensional reduction on the limit model was analyzed. These models are obtained in the membrane regime. Recently, the techniques from \cite{FJM-02,FJM-06} were combined together with two-scale convergence to obtain the models of homogenized rod in the bending regime (see \cite{Neukamm-11}), homogenized von  K\'arm\'an plate (see \cite{Velcic-12,NeuVel-12}), homogenized von K\'arm\'an shell (see \cite{Hornungvel12}) and homogenized bending plate (see \cite{Horneuvel12,Vel13}). These models were derived under the assumption of periodic oscillations where it was assumed that the material oscillates on the scale $\eps(h)$, while the thickness of the body is $h$. The obtained models depend on the parameter $\gamma=\lim_{h \to 0} \tfrac{h}{\eh}$. In the case of von K\'arm\'an plate the situation $\gamma=0$ corresponds to the case when dimensional reduction dominates and the obtained model is the model of homogenized von K\'arm\'an plate and can be obtained as the limit case when $\gamma \to 0$. Analogously, the situation when $\gamma=\infty$ corresponds to the case when homogenization dominates and can again be obtained as the limit when $\gamma  \to \infty$; this is the model of von K\'arm\'an plate obtained starting from homogenized energy. In the case of von K\'arm\'an shell and bending plate the situation $\gamma=0$ was more subtle and leaded that the models depend on the further assumption of the relation between $\eps(h)$ and $h$. We obtained different models for the case $\eh^2 \ll h \ll \eh$ and $h \sim \eh^2$.

Here we analyze the case of the simultaneous homogenization and dimensional reduction in the von K\'arm\'an regime in the general framework, without any assumption on the periodicity. Moreover, we do not suppose that the oscillations of the material are only in the in-plane directions (as opposed to periodic oscillations in \cite{NeuVel-12}), but can happen in any direction (even cross sectional). We obtain kind of stability result for the equations, i.e.,  in the limit, we always obtain the equations of von K\'arm\'an type model.
Simultaneous homogenization and dimensional reduction, without any assumption on periodicity, were also considered in a non-variational framework (see \cite{Courilleau04} for monotone nonlinear elliptic systems and \cite{gustafsson06} for linear elasticity system). In these papers compensated compactness arguments were used and the notion of $H$-convergence (introduced by Murat and Tartar, see \cite{TarMur97}) was adapted to the dimensional reduction.

This paper is the first treatment of simultaneous homogenization and dimensional reduction without periodicity assumption by variational techniques in the context of higher order models in elasticity, at least to the author's knowledge (membrane case is already analyzed in \cite{Braides-Fonseca-Francfort-00}).
The case of bending rod model is also recently analyzed by using the approach developed here (see \cite{MarVel-14}), which is the generalization of the periodic case analyzed in \cite{Neukamm-11}.
Here we restrict ourselves to von K\'arm\'an plate model where the linearization is already dominated, although the system itself is nonlinear.

We prove the validity of the following asymptotic formulae for the energy density
\begin{equation*}
Q(x'_0,M_1,M_2)=\lim_{r \to 0} \tfrac{1}{|B(x'_0,r)|} K\left(M_1+x_3M_2,B(x'_0,r)\right), \forall M_1,M_2 \in \R^{2 \times 2}_{\sym} \text{ and a.e. } x'_0 \in \omega,
\end{equation*}
where
\begin{eqnarray*}
K\left(M_1+x_3M_2,B(x'_0,r)\right) &=&\inf \Big\{\liminf_{h \to 0}  \int_{B(x'_0,r) \times I} Q^{h}\left(x,\iota(M_1+x_3M_2)+\nabla_{h} \psi^{h}\right) \, dx: \\ \nonumber & &\hspace{5ex}  (\psi_1^{h},\psi_2^{h},h\psi_3^{h}) \to 0 \textrm{ strongly in } L^2\left(B(x'_0,r) \times I,\R^3\right) \Big\},
\end{eqnarray*}
$B(x'_0,r)$ is a ball of radius $r$ and center $x'_0$ in $\R^2$, $\omega \subset \R^2$ is a Lipschitz domain, representing the plate, $I=[-\tfrac{1}{2},\tfrac{1}{2}]$, $x=(x',x_3)$, $\iota$ is the natural injection from $\R^{2 \times 2}$ to $\R^{3 \times 3}$ (see below) and $(Q^h)_{h>0}$ are quadratic functionals of $h$ problem (see Section \ref{sectiondrugi}).  This  formulae is  actually valid on a subsequence and  unifies all three regimes obtained in \cite{NeuVel-12}.

 We emphasize the fact that in the construction of recovery sequence for $\Gamma$-limit we do not use any kind of additional regularity of minimizers (or their higher integrability, see \cite{SW94}), that is usually done, but only equi-integrability of the minimizing sequence.
Besides equi-integrability of the minimizing sequence (i.e., the possibility to replace the gradients and scaled gradients by equi-integrable ones) there are two more key points of the proofs.
 One is the characterization of the displacements that have bounded symmetric gradients on thin domains (we adapt the result proved in \cite[Theorem 2.3]{griso05}) and the other is the  characterization of the displacements that have the energy of order $h^4$ (see \cite[Proposition 3.1]{NeuVel-12}).
The proof of this proposition relied on the theorem on geometric rigidity and similar observations in \cite{FJM-06}. The new essential part was to correct the vertical displacements in order  to obtain a sequence which is bounded in $H^2$. This was not done in \cite{FJM-06}, since the authors did not need more information on the corrector to obtain the lower bound.

The main result of this paper are Theorem \ref{thm:1} ("lower bound") and Theorem \ref{thm:up1} ("upper bound", see Remark \ref{nappprep}). Together with Lemma \ref{podniz} (which claims that every sequence has a subsequence that satisfies Assumption \ref{ass:main}) they are analogue for standard $\Gamma$-compactness result. We show that, under von K\'arm\'an scaling assumption, we always obtain, on a subsequence, von K\'arm\'an type energy where the energy density is given by the above asymptotic formulae.

The novelty of this paper  consists in recognizing the above asymptotic formulae, but also in the
adaptation of the general $\Gamma$-convergence techniques to the case of  higher ordered  thin models in elasticity.
Although the basic approach is not difficult (see the end of Section \ref{sectiondrugi}, where the strategy of the proofs is explained), it has its own peculiarities.

This paper is organized as follows: in Section \ref{sectiondrugi} we give general framework and main result, we also explain the strategy of the proofs, in Section \ref{sectiondokazi} we give the proofs of the main statements given in Section \ref{sectiondrugi} and  in the Appendix we prove some auxiliary claims.

\subsection{Notation}
If $x\in \R^3$ by $x'$ we denote $x'=(x_1,x_2)$. By $\nabla'$ we denote the operator
$\nabla' u=(\partial_1 u, \partial_2 u)$.
By $\bar{\R}$ we denote $\R \cup \{-\infty,+\infty\}$.
By $B(x,r)$  we denote the ball of radius $r$ with the center $x$ in the quadratic norm.
If $A \subset \R^n$, by $|A|$ we denote the Lebesgue measure of $A$.
If $A$ and $B$ are subsets of $\R^n$, by $A \ll B$ we mean
that  the closure $\bar{A}$ is contained in the interior $\inte(B)$ of $B$.

$\iota$ denotes the natural injection of $\R^{2\times 2}$ into $\R^{3\times 3}$.
Denoting the standard basis of $\R^3$ by $(e_1, e_2, e_3)$ it is given by
\begin{equation*}
\iota(A):= \sum_{\alpha,\beta=1}^2 A_{\alpha\beta}(e_\alpha\otimes e_\beta).
\end{equation*}
For $a,b \in \R^3$ by $a\wedge b$ we denote the wedge product of the vectors $a$ and $b$. By $\R^{n \times n}_{\sym}$ we denote the space of symmetric matrices of order $n$ while by $\R^{n \times n}_{\skw}$ we denote the space of antisymmetric matrices of order n. When we want to emphasize the dependence of  a sequence on variable $h$ we put sometimes   superscript $h$, sometimes superscript $h_n$. In the second case we want to emphasize the importance of depending on some predefined sequence (see ,e.g., Theorem \ref{thm:1}, Theorem \ref{thm:up1}), while in the first case such dependence is not so important and claims are satisfied for any sequence or even every $h$ (see, e.g., Theorem \ref{grisotm}, Lemma \ref{igor1}).

\section{General framework and main results} \label{sectiondrugi}
\paragraph{\bf The three-dimensional model.}
Throughout the paper $\Omega^h:=\omega\times(hI)$ denotes the
reference configuration of a thin plate with mid-surface
$\omega\subset\R^2$ and (rescaled) cross-section
$I:=(-\tfrac{1}{2},\tfrac{1}{2})$. We suppose that $\omega$ is Lipschitz domain, i.e., open, bounded and connected set with Lipschitz boundary.
We denote by $\Gamma=\partial \omega \times I$.
For simplicity we assume that
$\omega$ is centered, that is
\begin{equation}\label{ass:centered}
\int_\omega
  {\left(
      \begin{array}{c}
        x_1\\ x_2
      \end{array}\right)}\ud x_1\ud x_2=0.
\end{equation}
\begin{definition}[nonlinear material law]\label{def:materialclass}
  Let $0<\alpha\leq\beta$ and $\rho>0$. The class
  $\calW(\alpha,\beta,\rho)$ consists of all measurable functions
  $W\,:\,\R^{3 \times 3}\to[0,+\infty]$ that satisfy the following properties:
  \begin{align}
    \tag{W1}\label{ass:frame-indifference}
    &W\text{ is frame indifferent, i.e.}\\
    &\notag\qquad W(\Ra\boldsymbol F)=W(\boldsymbol F)\quad\text{ for
      all $\boldsymbol F\in\R^{3\times 3}$, $\boldsymbol R\in\SO
      3$;}\\
    \tag{W2}\label{ass:non-degenerate}
    &W\text{ is non degenerate, i.e.}\\
    &\notag\qquad W(\boldsymbol F)\geq \alpha\dist^2(\boldsymbol F,\SO 3)\quad\text{ for all
      $\boldsymbol F\in\R^{3\times 3}$;}\\
    &\notag\qquad W(\boldsymbol F)\leq \beta\dist^2(\boldsymbol F,\SO 3)\quad\text{ for all
      $\boldsymbol F\in\R^{3\times 3}$ with $\dist^2(\boldsymbol F,\SO 3)\leq\rho$;}\\
    \tag{W3}\label{ass:stressfree}
    &W\text{ is minimal at $\id$, i.e.}\\
    &\notag\qquad W(\id)=0;\\
    \tag{W4}\label{ass:expansion}
    &W\text{ admits a quadratic expansion at $\id$, i.e.}\\
    &\notag\qquad W(\id+\boldsymbol G)=Q(\boldsymbol G)+o(|\boldsymbol G|^2)\qquad\text{for all }\boldsymbol G\in\R^{3\times 3},\\
    &\notag\text{where $Q\,:\,\R^{3\times 3} \to\R$ is a quadratic form.}
  \end{align}
\end{definition}
In the following definition we state our assumptions on the family $(W^h)_{h>0}$
\begin{definition}[admissible composite material]\label{def:composite}
  Let $0<\alpha\leq\beta$ and $\rho>0$. We say that a family $(W^h)_{h>0}$
  \begin{equation*}
    W^h:\Omega\times \R^{3\times 3} \to \R^+\cup\{+\infty\}
  \end{equation*}
  describes an admissible composite material of class $\mathcal W(\alpha,\beta,\rho)$ if
  \begin{enumerate}[(i)]
  \item  For each $h>0$, $W^h$ is almost surely equal to a Borel function on $\Omega\times\R^{3\times 3}$,
  \item $W^h(x,\cdot)\in\mathcal W(\alpha,\beta,\rho)$ for  every $h>0$ and almost every $x\in\Omega$.
  \item there exists a monotone function $r:\R^+\to\R^+\cup\{+\infty\}$, such that $r(\delta)\to 0$ as
  $\delta\to 0$ and
  \begin{equation}\label{eq:94}
    \forall \boldsymbol G\in\R^{3\times 3}\,:\,\sup_{h > 0} \esssup_{x \in \Omega} |W^h(x,\id+\boldsymbol G)-Q^h(x,\boldsymbol G)|\leq|\boldsymbol G|^2r(|\boldsymbol G|),
  \end{equation}
  where $Q^h(x,\cdot)$ is a quadratic form given in Definition \ref{def:materialclass}.
  \end{enumerate}
\end{definition}
Notice that $Q^h$ can be written as the pointwise limit
\begin{equation} \label{defQ} (x,G) \to Q^h(x,G) := \lim_{\e \to 0}
\tfrac{1}{\e^2} W^h(x, Id+\e G),
\end{equation}
and therefore inherits the measurability properties of $W^h$.

\begin{lemma}
  \label{lem:111}
  Let $(W^h)_{h>0}$ be as in Definition~\ref{def:composite} and let $(Q^h)_{h>0}$ be the quadratic form
  associated to $W^h$ through the expansion \eqref{ass:expansion}. Then
  \begin{enumerate}[(i)]
  \item[(Q1)] for all $h>0$ and almost all $x\in\Omega$ the map $Q^h(x,\cdot)$ is quadratic and
      satisfies
    \begin{equation*}
      \alpha|\sym \boldsymbol G|^2\leq Q^h (x,\boldsymbol G)=Q^h(x,\sym \boldsymbol G)\leq \beta|\sym \boldsymbol G|^2\qquad\text{for all $ \boldsymbol G\in\R^{3\times 3}$.}
    \end{equation*}
  \end{enumerate}
\end{lemma}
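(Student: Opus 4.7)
The quadraticity of $Q^h(x,\cdot)$ is already built into the definition, since by \eqref{ass:expansion} the function $Q^h(x,\cdot)$ is a quadratic form. So the real content is the identity $Q^h(x,G)=Q^h(x,\sym G)$ and the two-sided bound by $|\sym G|^2$. My plan is to derive the symmetry from frame indifference (W1) and the bounds from non-degeneracy (W2), in each case using the quadratic expansion (W4) applied along the curve $\e\mapsto \id+\e G$ and letting $\e\to 0$.

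For the symmetry, I would fix $G\in\R^{3\times 3}$ and, for small $\e>0$, pick the rotation $R_\e:=\exp(-\e\,\skw G)\in\SO 3$. A direct expansion gives $R_\e=\id-\e\,\skw G+O(\e^2)$, whence
\begin{equation*}
R_\e(\id+\e G)=\id+\e(G-\skw G)+O(\e^2)=\id+\e\,\sym G+\e^2 C_\e,
\end{equation*}
with $\|C_\e\|$ uniformly bounded. By (W1) and (W4) applied to both $\id+\e G$ and $\id+\e\,\sym G+\e^2 C_\e$,
\begin{equation*}
\e^2 Q^h(x,G)+o(\e^2)=W^h(x,\id+\e G)=W^h\bigl(x,\id+\e\,\sym G+\e^2 C_\e\bigr)=\e^2 Q^h(x,\sym G)+o(\e^2),
\end{equation*}
where in the last step I use that $Q^h(x,\cdot)$ is a continuous quadratic form, so $Q^h(x,\sym G+\e C_\e)\to Q^h(x,\sym G)$. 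Dividing by $\e^2$ and sending $\e\to 0$ yields $Q^h(x,G)=Q^h(x,\sym G)$.

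For the bounds, the geometric input I need is
\begin{equation*}
\dist^2(\id+\e G,\SO 3)=\e^2|\sym G|^2+o(\e^2)\qquad\text{as }\e\to 0,
\end{equation*}
which follows because the rotation $R_\e$ above is optimal at first order: the projection onto $\SO 3$ is smooth near $\id$, and the above identity is the second-order expansion of its squared distance. Once this is in hand, for small $\e$ one has $\dist^2(\id+\e G,\SO 3)\le\rho$, so (W2) gives both inequalities
\begin{equation*}
\alpha\,\dist^2(\id+\e G,\SO 3)\le W^h(x,\id+\e G)\le \beta\,\dist^2(\id+\e G,\SO 3).
\end{equation*}
Inserting the expansions of $W^h(x,\id+\e G)$ from (W4) and of the squared distance, dividing by $\e^2$, and letting $\e\to 0$ yields $\alpha|\sym G|^2\le Q^h(x,G)\le \beta|\sym G|^2$.

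The only step requiring a tiny bit of care is the squared-distance expansion and the handling of the $o(\e^2)$ errors: one must make sure the error in (W4) (which is $o(|\boldsymbol G|^2)$ in $\boldsymbol G$) produces a true $o(\e^2)$ when $\boldsymbol G=\e\,\sym G+\e^2C_\e$ with $C_\e$ bounded, so that it can be absorbed when dividing by $\e^2$. This is the main (and only) technical obstacle; everything else is bookkeeping.
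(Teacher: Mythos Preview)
Your argument is correct. The paper's own proof is a one-liner, ``(Q1) follows from (W2)'', so you have filled in considerably more detail than the author does.

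One small difference worth noting: you invoke frame indifference (W1) to obtain the identity $Q^h(x,G)=Q^h(x,\sym G)$, whereas the paper attributes everything to (W2). In fact (W2) alone suffices. Once the two-sided bound $\alpha|\sym G|^2\le Q^h(x,G)\le\beta|\sym G|^2$ is established from (W2) and (W4) exactly as you do, it follows that $Q^h(x,A)=0$ for every skew-symmetric $A$. Since $Q^h(x,\cdot)$ is a nonnegative quadratic form, any $A$ with $Q^h(x,A)=0$ lies in its kernel, so the associated bilinear form satisfies $Q^h(x,A,B)=0$ for all $B$. Expanding $Q^h(x,\sym G+\skw G)$ then gives $Q^h(x,G)=Q^h(x,\sym G)$ with no appeal to (W1). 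Your route via (W1) is equally valid and perhaps conceptually cleaner, since it isolates the geometric reason for the symmetry; the (W2)-only route is slightly more economical in hypotheses.
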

\begin{proof}
(Q1) follows from (W2).
\end{proof}
\begin{remark}\label{ocjenkv}
From (Q1) it follows
\begin{eqnarray} \label{ocjenakv1} && \\ \nonumber
|Q^h(x,G_1)-Q^h(x,G_2)| &\leq& \beta |\sym G_1-\sym G_2|\cdot |\sym G_1+\sym G_2|,\\ & & \nonumber \hspace{10ex} \forall h>0, G_1,G_2 \in \R^{3 \times 3}.
\end{eqnarray}
\end{remark}
In the von K\'arm\'an regime we look for the energy functionals
\begin{equation*}
  I^{h}(\ya):=
      \frac{1}{h^4}\int_\Omega W^h(x,\nabla_h\ya(x))\ud
    x,
\end{equation*}
imposing their finiteness.
Denote by $e_h(y)$
  \begin{equation}
  e_h(y)=\tfrac{1}{h^4} \int_\Omega \dist^2(\nabla_h y,\SO 3).
  \end{equation}
With von K\'arman model we associate the triple (the  limit deformation is rigid and the energy depends on the horizontal in-plane displacement $u$ and vertical displacement $v$, see Theorem \ref{thm:1})
\begin{equation*}
  (\bar{\bs R},\ua,v)\in \SO 3\times\mathcal A(\omega),\qquad
  \mathcal A(\omega):=\left\{\,  (\bs
  u,v)\,:\,\bs u\in H^1(\omega,\R^2),\,v\in H^2(\omega)\,\right\}.
\end{equation*}

The following definition and lemma can be found in \cite{NeuVel-12}. The definition is changed in the way that we require less, i.e.,  strong convergence in $L^2$ instead of weak convergence in $H^1$.
The role of the definition is to introduce the standard limit deformations in von K\'arm\'an regime.
We will give the proof of the uniqueness for the sake of completeness.
\begin{definition}
  \label{def:2}
  We say that a sequence  $(\ya^{h})_{h>0}\subset L^2(\Omega,\R^3)$ converges to a triple $(\bar{\bs R},\bs
  u,v)\in\SO3\times L^2(\omega,\R^2)\times L^2(\omega)$, and write  $\ya^{h}\to(\bar{\bs R},\bs u,v)$, if there exist rotations
  $(\bar{\bs R}^{h})_{h>0}$ and functions $(\bs u^{h})_{h>0} \subset  L^2(\omega,\R^2)$, $(v^{h})_{h>0} \subset  L^2(\omega)$ such that
  \begin{align} \label{eq:ig000}
    &(\bar{\bs R}^{h})^T\left(\int_I \ya^{h}(x',x_3)\,dx_3-\fint_\Omega\ya^{h}\,dx\right)=
    \left(\begin{array}{c}
        x'+h^2{\bs u}^{h}(x')\\
        h v^{h}(x')
      \end{array}\right),\\ \label{eq:ig00}
    &\ua^{h}\to\ua\text{ in }L^2(\omega,\R^2),\qquad v^{h}\to v\text{ in }L^2(\omega) \qquad\text{and}\qquad\bar\Ra^{h}\to\bar\Ra.
  \end{align}
\end{definition}

A limit in the sense of Definition~\ref{def:2} is not unique as it
stands. However, uniqueness is obtained modulo the following
equivalence relation on $L^2(\omega,\R^2)\times L^2(\omega)$:
\begin{multline*}
  (\bs u_1,v_1)\sim   (\bs u_2,v_2)\qquad:\Leftrightarrow\\
  \left\{\begin{aligned}
    &\bs u_2(x')=\bs u_1(x')+(\bs A-\tfrac{1}{2}\bs
    a\otimes\bs a)x'-v_1(x')\bs a\\
    &v_2(x')=v_1(\hat
    x)+\bs a\cdot x'
  \end{aligned}\right.\qquad\text{ for some }\bs a\in\R^2,\bs
  A\in \R^{2 \times 2}_{\skw}.
\end{multline*}

The proof of the following lemma is given in the Section \ref{sectiondokazi}.
\begin{lemma}[uniqueness]
  \label{L:1.2}
  Let $(\bar\Ra,\ua,v)$, $(\widetilde\Ra,\tilde\ua,\tilde
  v)\in \SO 3\times L^2(\omega,\R^2)\times L^2(\omega)$ and consider a
  sequence $(\ya^{h})_{h>0}$ that converges to $(\bar\Ra,\ua,v)$. Then
  \begin{equation*}
    \ya^{h}\to(\widetilde\Ra,\tilde\ua,\tilde
    v)\qquad\Leftrightarrow\qquad \widetilde\Ra=\bar\Ra\text{ and
    }(\ua,v)\sim(\tilde\ua,\tilde v).
  \end{equation*}
\end{lemma}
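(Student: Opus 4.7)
Set $Y^h:=\int_I y^h(\cdot,x_3)\,dx_3-\fint_\Omega y^h$; both convergences give $Y^h=\bar R^h X^h=\tilde R^h\tilde X^h$, where $X^h:=(x'+h^2u^h,\,hv^h)^{\transpose}$ and $\tilde X^h$ is defined analogously. Setting $S^h:=(\bar R^h)^{\transpose}\tilde R^h\in\SO 3$ yields $X^h=S^h\tilde X^h$ a.e.\ on $\omega$. I first identify $\bar R=\tilde R$: since $u^h,\tilde u^h,v^h,\tilde v^h$ converge strongly in $L^2$ and $S^h\to S:=\bar R^{\transpose}\tilde R\in\SO 3$, passage to the $L^2$ limit collapses this identity to $(x',0)^{\transpose}=S(x',0)^{\transpose}$ a.e.\ on $\omega$. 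Because $\omega$ contains two linearly independent points, $S$ fixes the $x_1x_2$-plane pointwise; combined with $S\in\SO 3$ this forces $S=I$, hence $\bar R=\tilde R$.

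Next I extract the equivalence $(u,v)\sim(\tilde u,\tilde v)$. Decompose $S^h-I=\tilde A^h+B^h$ with $\tilde A^h:=\skw(S^h-I)$ and $B^h:=\sym(S^h-I)$; the orthogonality $(S^h)^{\transpose}S^h=I$ forces $B^h=\tfrac12(\tilde A^h)^2+O(|\tilde A^h|^3)$. Parametrise the antisymmetric $\tilde A^h$ by a scalar $c^h\in\R$ (its in-plane part) and a vector $b^h\in\R^2$ encoding the tilt of the third axis, so that $\tilde A^h(x',0)^{\transpose}=(c^h\bigl(\begin{smallmatrix}0&-1\\1&0\end{smallmatrix}\bigr)x',\,-b^h\cdot x')^{\transpose}$. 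Taking $L^2(\omega)$ norms of the third and of the first two components of $X^h=S^h\tilde X^h$ and noting that these norms are comparable to $|b^h|$ and $|c^h|$ respectively (the domain $\omega$ is non-degenerate), a short bootstrap yields $|b^h|=O(h)$, $|c^h|=O(h^2)$, and hence $|\tilde A^h|=O(h)$.

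Extracting a subsequence with $b^h/h\to\bar b\in\R^2$ and $c^h/h^2\to\bar c\in\R$, division of the third component by $h$ produces $v-\tilde v=-\bar b\cdot x'$ in $L^2(\omega)$; division of the first two components by $h^2$, retaining the symmetric contribution $\tfrac12(\tilde A^h)^2(x',0)^{\transpose}/h^2\to -\tfrac12(\bar b\otimes\bar b)x'$ and using $\tilde v^h\to\tilde v$, gives
\begin{equation*}
u-\tilde u=\bar c\begin{pmatrix}0&-1\\1&0\end{pmatrix}x'+\bar b\,\tilde v-\tfrac12(\bar b\otimes\bar b)x'\qquad\text{in }L^2(\omega).
\end{equation*}
Substituting $\tilde v=v+\bar b\cdot x'$ and setting $a:=\bar b$ and $A:=-\bar c\bigl(\begin{smallmatrix}0&-1\\1&0\end{smallmatrix}\bigr)\in\R^{2\times 2}_{\skw}$ rewrites this pair of identities as the stated equivalence.

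For the converse, given $a,A$ realizing $(u,v)\sim(\tilde u,\tilde v)$, I define $\tilde R^h:=\bar R^h\exp(h\hat a+h^2\hat A)$, where $\hat a,\hat A\in\R^{3\times 3}_{\skw}$ are chosen so that $\hat a$ encodes the tilt $a$ in its $(1,3),(2,3)$-entries and $\hat A$ has upper-left $2\times 2$ block $-A$. Expanding the exponential to order $h^3$ and reading off $\tilde u^h,\tilde v^h$ from $(\tilde R^h)^{\transpose}Y^h=(x'+h^2\tilde u^h,\,h\tilde v^h)^{\transpose}$ yields $\tilde v^h=v^h+a\cdot x'+O(h^2)$ and $\tilde u^h=u^h-v^h a+(A-\tfrac12 a\otimes a)x'+O(h)$ in $L^2(\omega)$, so $\tilde u^h\to\tilde u$ and $\tilde v^h\to\tilde v$. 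The delicate part is the forward direction: the symmetric correction $\sym(S^h-I)=\tfrac12(\tilde A^h)^2+O(h^3)$ is exactly of order $h^2$ and must be retained to produce the nonlinear coupling $-\tfrac12 a\otimes a$ in the equivalence; controlling the $O(|\tilde A^h|^3)$ remainder after division by $h^2$ relies on the sharp bound $|\tilde A^h|=O(h)$ and on $L^2$-boundedness of $u^h,\tilde u^h,v^h,\tilde v^h$, both guaranteed by the hypothesis.
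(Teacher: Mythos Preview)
Your argument is correct and follows essentially the same route as the paper's proof: both compare the two representations of $\int_I y^h\,dx_3-\fint_\Omega y^h$, identify $\bar R=\tilde R$ at leading order, expand the intermediate rotation $S^h$ (respectively $\hat R^h$) to second order in $h$ to extract the equivalence relations, and construct the converse via an explicit exponential. The only stylistic difference is that the paper obtains full convergence of $(\hat R^h-I)/h$ directly via the wedge identity $\hat R^h e_3=\hat R^h e_1\wedge\hat R^h e_2$, whereas you bound $|\tilde A^h|=O(h)$ and pass to a subsequence---which suffices since the limiting relations determine $\bar b,\bar c$ uniquely.
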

\subsection{Identification of limit equations}
The following two definitions are analogous for $\Gamma$-$\liminf$ and $\Gamma$-$\limsup$. They are used to obtain that the Assumption \ref{ass:main} is satisfied on a subsequence (see Lemma \ref{podniz}).
We define for the sequence $(h_n)_{n \in \N}$ which monotonly decreases to zero  and  arbitrary $A \subset \omega$ open and $M \in L^2(\Omega,\R^{2 \times 2}_{\sym})$,
\begin{eqnarray}
\label{gli} & &\\ \nonumber K^{-}_{(h_n)_{n \in \N}}(M,A)&=&\inf \Big\{\liminf_{n \to \infty}  \int_{A \times I} Q^{h_n}\left(x,\iota(M)+\nabla_{h_n} \psi^{h_n}\right) \, dx: \\ \nonumber & & \hspace{20ex} (\psi_1^{h_n},\psi_2^{h_n},h_n\psi_3^{h_n}) \to 0 \textrm{ strongly in } L^2(A \times I,\R^3) \Big\} \\ \nonumber
&=& \sup_{\mathcal{U} \subset \mathcal{N}(0)} \liminf_{n \to \infty}\inf_{\psi \in H^1(A \times I,\R^3) \atop (\psi_1,\psi_2,h_n\psi_3) \in \mathcal{U}}
 \int_{A \times I} Q^{h_n}\left(x,\iota(M)+\nabla_{h_n} \psi\right) \, dx, \\
\label{gls} & &\\ \nonumber K^{+}_{(h_n)_{n \in \N}}(M,A)&=&\inf \Big\{\limsup_{n \to \infty} \int_{A \times I} Q^{h_n}\left(x,\iota(M)+\nabla_{h_n} \psi^{h_n}\right) \, dx:\\ \nonumber & & \hspace{20ex} (\psi_1^{h_n},\psi_2^{h_n},h_n \psi_3^{h_n}) \to 0 \textrm{ strongly in } L^2(A \times I,\R^3) \Big\} \\ \nonumber
&=& \sup_{\mathcal{U} \subset \mathcal{N}(0)} \limsup_{n \to \infty} \inf_{\psi \in H^1(A \times I,\R^3) \atop (\psi_1,\psi_2,h_n \psi_3) \in  \mathcal{U}} \int_{A \times I} Q^{h_n}\left(x,\iota(M)+\nabla_{h_n} \psi\right) \, dx.
\end{eqnarray}
By $\mathcal{N}(0)$ we have denoted the family of all neighborhoods of zero in the strong $L^2$ topology.
\begin{remark}\label{minat1}
Since the above expressions are monotonly decreasing in  $\mathcal{N}(0)$ it is enough to take the supremum on the monotone sequence of neighborhoods that shrinks to $\{0\}$ e.g. the sequence of (open or closed) balls of radius $r$, when $r \to 0$.
\end{remark}
\begin{remark}\label{barbara1000}
Notice that instead of $h_n \psi^{h_n}_3$ we could introduce the variable $\tilde{\psi}^{h_n}_3$ and then we could look for $\Gamma$-limit in zero of the changed functional in strong $L^2$ topology. However the obtained functional is not coercive in gradient and thus we can not use standard abstract theory developed for these kind of functionals (see \cite{DM93}).
\end{remark}
\begin{remark}\label{minat2}
By using standard diagonalization argument  it can be shown that for any $(h_n)_{n\in \N}$ monotonly decreasing to zero and any $A \subset \omega$ open and $M \in L^2(\Omega,\R^{2 \times 2}_{\sym})$ it holds
\begin{eqnarray*}
 \nonumber K^{-}_{(h_n)_{n \in \N}}(M,A)&=&\min \Big\{\liminf_{n \to \infty}  \int_{A \times I} Q^{h_n}\left(x,\iota(M)+\nabla_{h_n} \psi^{h_n}\right) \, dx: \\ \nonumber & & \hspace{15ex} (\psi_1^{h_n},\psi_2^{h_n},h_n\psi_3^{h_n}) \to 0 \textrm{ strongly in } L^2(A \times I,\R^3) \Big\}, \\ \nonumber K^{+}_{(h_n)_{n \in \N}}(M,A)&=&\min \Big\{\limsup_{n \to \infty} \int_{A \times I} Q^{h_n}\left(x,\iota(M)+\nabla_{h_n} \psi^{h_n}\right) \, dx:\\ \nonumber & & \hspace{15ex} (\psi_1^{h_n},\psi_2^{h_n},h_n \psi_3^{h_n}) \to 0 \textrm{ strongly in } L^2(A \times I,\R^3) \Big\}.
\end{eqnarray*}

\end{remark}

Let $\mathcal{D}$ denote the countable family of open subsets of $\omega$ which is dense (see Definition \ref{defap00}) and such that every $D \in \mathcal{D}$ is of class $C^{1,1}$.
The following lemma uses the standard diagonalization argument and Lemma \ref{lem:ocjena}. Since the proof is easy we will give it immediately here, although we are in this way violating a bit the structure of the paper.
\begin{lemma} \label{podniz}
For every sequence $(h_n)_{n \in \N}$ monotonly decreasing to zero there exists a subsequence, still denoted by $(h_n)_{n \in \N}$, such that
$$ K^+_{(h_n)_{n \in \N}}(M, D)=K^{-}_{(h_n)_{n \in \N}} (M,D),\quad \forall M \in L^2(\Omega,\R^{2 \times 2}_{\sym}), \ \forall D \in \mathcal{D}.$$
\end{lemma}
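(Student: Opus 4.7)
The plan is a Cantor diagonalization over a countable index set, followed by a density argument in the variable $M$.

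By Remark \ref{minat1} one may replace the supremum over $\mathcal{N}(0)$ in the definitions of $K^\pm_{(h_n)_{n\in\N}}$ by a supremum over a fixed countable monotone sequence of open $L^2$-balls $\mathcal{U}_m$ shrinking to $\{0\}$. Since $\mathcal{D}$ is countable and $L^2(\Omega,\R^{2\times 2}_{\sym})$ is separable, fix also a countable dense subset $\mathcal{M}=\{M_i\}_{i\in\N}\subset L^2(\Omega,\R^{2\times 2}_{\sym})$. For each triple $(i,j,m)\in\N^3$ set
\[
J^n_{i,j,m}:=\inf\Bigl\{\int_{D_j\times I}Q^{h_n}(x,\iota(M_i)+\nabla_{h_n}\psi)\,dx:\psi\in H^1(D_j\times I,\R^3),\ (\psi_1,\psi_2,h_n\psi_3)\in\mathcal{U}_m\Bigr\}.
\]
Testing with $\psi\equiv 0$ and using Lemma \ref{lem:111} yields $0\leq J^n_{i,j,m}\leq \beta\|M_i\|_{L^2(D_j\times I)}^2$. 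A standard Cantor diagonal extraction over the countable index set $\N^3$ produces a subsequence, still denoted $(h_n)$, along which $J^n_{i,j,m}$ converges for every $(i,j,m)$. Hence $\liminf_n J^n_{i,j,m}=\limsup_n J^n_{i,j,m}$ for every $(i,j,m)$, and taking the supremum in $m$ gives $K^-_{(h_n)_{n\in\N}}(M_i,D_j)=K^+_{(h_n)_{n\in\N}}(M_i,D_j)$ for all $i,j\in\N$.

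It then remains to promote this equality from $\mathcal{M}$ to all of $L^2(\Omega,\R^{2\times 2}_{\sym})$. Applying Remark \ref{ocjenkv} to an admissible test sequence $\psi^{h_n}$ gives, for any $M,\tilde M$,
\[
\Bigl|\int_{D\times I}\!\bigl(Q^{h_n}(x,\iota(M)+\nabla_{h_n}\psi^{h_n})-Q^{h_n}(x,\iota(\tilde M)+\nabla_{h_n}\psi^{h_n})\bigr)dx\Bigr|\leq \beta\|M-\tilde M\|_{L^2}\,\|\iota(M+\tilde M)+2\sym\nabla_{h_n}\psi^{h_n}\|_{L^2}.
\]
For a near-optimal $\psi^{h_n}$ the coercivity (Q1) bounds $\|\sym\nabla_{h_n}\psi^{h_n}\|_{L^2}$ by $C(1+\|M\|_{L^2})$, producing the Lipschitz-type estimate
\[
|K^\pm_{(h_n)_{n\in\N}}(M,D)-K^\pm_{(h_n)_{n\in\N}}(\tilde M,D)|\leq C\|M-\tilde M\|_{L^2}\bigl(1+\|M\|_{L^2}+\|\tilde M\|_{L^2}\bigr),
\]
with $C$ independent of $(h_n)$ and $D\in\mathcal{D}$. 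This is the content of Lemma \ref{lem:ocjena}. Density of $\mathcal{M}$ in $L^2$ then extends the identity $K^-=K^+$ to every $M\in L^2(\Omega,\R^{2\times 2}_{\sym})$.

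The only delicate point is the continuity estimate of the last step: one must verify that near-optimal test functions for $K^\pm(M,D)$ have symmetric scaled gradients bounded uniformly in terms of $\|M\|_{L^2}$ alone, so that the Lipschitz bound on $K^\pm(\cdot,D)$ is uniform in the chosen subsequence. Once this is supplied by Lemma \ref{lem:ocjena}, the Cantor diagonalization over $\mathcal{M}\times\mathcal{D}\times\{\mathcal{U}_m\}_{m\in\N}$ completes the proof.
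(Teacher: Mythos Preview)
Your proof is correct and follows essentially the same approach as the paper: Cantor diagonalization over a countable dense subset $\mathcal{M}\subset L^2(\Omega,\R^{2\times 2}_{\sym})$ and the countable family $\mathcal{D}$, followed by the continuity estimate of Lemma~\ref{lem:ocjena} to pass from $\mathcal{M}$ to all of $L^2$. The only difference is that you spell out the diagonalization explicitly by introducing the triple-indexed quantities $J^n_{i,j,m}$ (using Remark~\ref{minat1} to reduce to countably many neighborhoods), whereas the paper simply asserts that ``by a diagonalizing argument it is not difficult'' to achieve $K^+=K^-$ on $\mathcal{M}\times\mathcal{D}$ and then invokes Lemma~\ref{lem:ocjena}.
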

\begin{proof}
Take a countable family $\{M_j\}_{j \in N}\subset  L^2 (\Omega,\R^{2 \times 2}_{\sym})$
which is dense in $L^2 (\Omega,\R^{2 \times 2}_{\sym})$.
By a diagonalizing argument it is not difficult to construct a subsequence $(h_n)_{n \in \N}$ monotonly decreasing to zero such that
$$K^+_{(h_n)_{n \in \N}}(M_j, D)=K^{-}_{(h_n)_{n \in \N}} (M_j,D),\quad \forall j \in \N, \ \forall D \in \mathcal{D}.$$
From Lemma \ref{lem:ocjena} and density we have the claim.
\end{proof}
We have proved that for fixed sequence the following assumption is always valid on a subsequence.
\begin{assumption}
  \label{ass:main}
   For a sequence $(h_n)_{n \in \N}$ monotonly decreasing to zero we suppose that for every $D \in \mathcal{D}$  and every $M \in  L^2 (\Omega,\R^{2 \times 2}_{\sym})$  there exists $K(M,D)$ such that we have
  $$K^{+}_{(h_n)_{n \in \N}}(M,D)=K^{-}_{(h_n)_{n \in \N}}(M,D)=:K(M,D). $$
\end{assumption}
\begin{remark} \label{stefan100}
The Assumption \ref{ass:main} is the assumption needed for Theorem \ref{thm:1} and Theorem \ref{thm:up1} to be valid. Lemma \ref{podniz} states that the Assumption (and thus the mentioned theorems) are satisfied on a subsequence.
Theorem \ref{thm:1} and Theorem \ref{thm:up1} are analogous for standard $\Gamma$-compactness result (see \cite{Braidesgamma02,DM93}).
\end{remark}
The following lemma is easy to prove by a contradiction (see the proof of \cite[Proposition 1.44]{Braidesgamma02}).
\begin{lemma}\label{podniz1}
  Suppose that for a given sequence $(h_n)_{n \in \N}$ monotonly decreasing to zero the following is valid:
  for every $M \in L^2(\Omega,\R^{2 \times 2}_{\sym})$ and every $D \in \mathcal{D}$ there exists $K(M,D)$ such that every subsequence  $(h_{n(k)})_{k \in \N}$ monotonly decreasing to zero has a subsequence, still denoted by $(h_{n(k)})_{k \in \N}$, which satisfies
$$K(M,D)=K^{-}_{(h_{n(k)})_{k \in \N}}(M,D).   $$
Then the Assumption \ref{ass:main} is satisfied.
\end{lemma}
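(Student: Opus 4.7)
The plan is a Urysohn-type argument in the spirit of \cite[Proposition 1.44]{Braidesgamma02}. I will split the proof into showing $K^-_{(h_n)_{n \in \N}}(M,D)=K(M,D)$ and $K^+_{(h_n)_{n \in \N}}(M,D)=K(M,D)$ separately. A preliminary monotonicity observation that I would record at the outset: for any subsequence $(h_{n(k)})_{k \in \N}$ of $(h_n)_{n \in \N}$, one has $K^-_{(h_n)_{n \in \N}}(M,D)\le K^-_{(h_{n(k)})_{k \in \N}}(M,D)$. This is because any admissible test sequence on the subsequence can be extended by $0$ on the complementary indices (preserving strong $L^2$-convergence to zero) and the resulting $\liminf$ along the full sequence is at most the $\liminf$ along the subsequence.

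Applying the hypothesis to $(h_n)_{n \in \N}$ itself produces a subsequence with $K^-_{(h_{n(k)})_{k \in \N}}(M,D)=K(M,D)$, so monotonicity immediately gives $K^-_{(h_n)_{n \in \N}}(M,D)\le K(M,D)$. For the reverse inequality, suppose by contradiction that $K^-_{(h_n)_{n \in \N}}(M,D)<K(M,D)-\delta$ for some $\delta>0$. Then I can pick an admissible $(\psi^{h_n})_n$ with $\liminf_n\int_{D\times I} Q^{h_n}(x,\iota(M)+\nabla_{h_n}\psi^{h_n})\,dx<K-\delta$, extract a subsequence along which these integrals converge to some $L<K-\delta$, and apply the hypothesis to extract a further subsequence $(h_{n(k_j)})_{j \in \N}$ with $K^-_{(h_{n(k_j)})_{j \in \N}}(M,D)=K$. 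The restriction of $(\psi^{h_n})$ is still admissible on the sub-subsequence with $\liminf_j=L$, whence $K^-_{(h_{n(k_j)})_{j \in \N}}(M,D)\le L<K$, a contradiction. Thus $K^-_{(h_n)_{n \in \N}}(M,D)=K(M,D)$, and from $K^-\le K^+$ one obtains $K^+_{(h_n)_{n \in \N}}(M,D)\ge K(M,D)$ for free.

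The delicate remaining step is $K^+_{(h_n)_{n \in \N}}(M,D)\le K(M,D)$. The extension-by-zero trick fails here because the filler integrals $\int_{D\times I} Q^{h_n}(x,\iota(M))\,dx$ control $\liminf$ but not $\limsup$. Instead I would work with the sup-limsup-inf representation in (\ref{gls}). Assuming $K^+_{(h_n)_{n \in \N}}(M,D)>K(M,D)$, there is a neighborhood $\mathcal{U}$ of $0$ in $L^2$ with $\limsup_n F_n^\mathcal{U}>K(M,D)$, where
$$F_n^\mathcal{U}:=\inf_{(\psi_1,\psi_2,h_n\psi_3)\in\mathcal{U}}\int_{D\times I} Q^{h_n}(x,\iota(M)+\nabla_{h_n}\psi)\,dx.$$
Extract a subsequence with $F_{n(k)}^\mathcal{U}\to L>K$, apply the hypothesis to obtain a further subsequence $(h_{n(k_j)})_{j \in \N}$ with $K^-_{(h_{n(k_j)})_{j \in \N}}(M,D)=K$, and use the sup-liminf representation in (\ref{gli}) with the same $\mathcal{U}$ to deduce $\liminf_j F_{n(k_j)}^\mathcal{U}\le K$. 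This contradicts $F_{n(k_j)}^\mathcal{U}\to L>K$.

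The main obstacle is this final $K^+\le K$ step: because $\limsup$ of the integrals is not controllable by extension from subsequences, one cannot stay at the level of test sequences and must exploit the inf-sup duality built into the very definition of $K^\pm$. The key bookkeeping is to fix one common neighborhood $\mathcal{U}$ that simultaneously witnesses the violation of $K^+\le K$ on the subsequence and feeds the sup-liminf representation on the sub-subsequence.
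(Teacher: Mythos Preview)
Your proof is correct and follows exactly the Urysohn-type contradiction scheme the paper points to (\cite[Proposition~1.44]{Braidesgamma02}); the paper itself gives no details beyond that reference. Your observation that the step $K^{+}\le K$ must be handled through the $\sup$--$\limsup$--$\inf$ representation \eqref{gls} (rather than by extending test sequences) is precisely the right adaptation, and the bookkeeping with the fixed neighborhood $\mathcal U$ is clean.
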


We introduce the space $\mathcal{S}_{vK}(\omega) \subset L^2(\Omega,\R^{2 \times 2}_{\sym})$ of matrix fields which appear as limit strains in von K\'arm\'an model
$$\mathcal{S}_{vK}(\omega)=\{M_1+x_3 M_2:\, M_1, M_2 \in L^2(\omega,\R^{2 \times 2}_{\sym}) \}. $$
\begin{remark} \label{razjasnjenje}
Starting from Lemma \ref{podniz} and Assumption \ref{ass:main} we could state the results, only restricting ourselves on the space $\mathcal{S}_{vK}(\omega)$ instead of $L^2(\Omega,\R^{2 \times 2}_{\sym})$. We refrained ourselves from doing so for the sake of generality, when it was meaningful.
\end{remark}

The proofs of the following claims are put in the Section \ref{sectiondokazi}. The next proposition is one of the key claims of the paper.
It corresponds to standard claim of integral representation of $\Gamma$-limit. Here we are able to specify that $Q$ is quadratic with some additional properties.
\begin{proposition}\label{identi}
Let  $(h_n)_{n\in \N}$ be a sequence for which the Assumption~\ref{ass:main} is satisfied.
Then there exists a function $Q:\omega \times \R^{2\times 2}_{\sym}\times \R^{2\times 2}_{\sym} \to \R$ (dependent on this sequence) such that for every
$A \subset \omega$ open and every $M \in \mathcal{S}_{vK}(\omega)$ of the form
$$ M=M_1+x_3 M_2, \text{ for some } M_1, \ M_2 \in L^2(\omega,\R^{2 \times 2}_{\sym}),$$
 we have
\begin{equation} \label{integralineq}
 K(M,A)=\int_{A} Q(x',M_1(x'), M_2(x')) \, dx'.
\end{equation}
Moreover $Q$ satisfies the following property
  \item[(Q'1)] for  almost all $x'\in\omega$ the map $Q(x',\cdot,\cdot)$ is a quadratic form and
      satisfies
    \begin{equation*}
      \tfrac{\alpha}{12}\left(|\boldsymbol G_1|^2+|\boldsymbol G_2|^2\right)\leq Q (x',\boldsymbol G_1,G_2)\leq \beta\left(| \boldsymbol G_1|^2+|G_2|^2\right)\qquad\text{for all $ \boldsymbol G_1,\ G_2\in\R^{2 \times 2}_{\sym}$.}
    \end{equation*}
\end{proposition}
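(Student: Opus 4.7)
The plan is to realize $K(M,\cdot)$ as the trace on open sets of a Borel measure on $\omega$ for each fixed $M\in\mathcal{S}_{vK}(\omega)$, verify its absolute continuity with respect to Lebesgue measure, and identify the resulting Radon-Nikodym density with the desired quadratic form $Q$.

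First I would verify that $A\mapsto K(M,A)$ satisfies the hypotheses of the De Giorgi-Letta extension criterion: monotonicity, inner regularity along $\mathcal{D}$, and both superadditivity and subadditivity on pairs of open sets. Monotonicity is immediate, since any admissible test on a larger open set restricts to an admissible test on a smaller one and the integrand is nonnegative. Superadditivity on disjoint opens follows by restricting almost-optimal sequences on $A\cup B$ to each component. The technical heart is subadditivity, for which I would use a fundamental estimate: given almost-optimal sequences $\psi^{h_n}_A$, $\psi^{h_n}_B$ on open neighborhoods of $A$ and $B$, one splices them by means of smooth cut-offs depending only on $x'$ and absorbs the transition energy using the strong $L^2$-convergence of $(\psi^{h_n}_1,\psi^{h_n}_2,h_n\psi^{h_n}_3)$ to zero, together with the continuity inequality of Remark \ref{ocjenkv}. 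Assumption \ref{ass:main} is essential here: the identity $K^+=K^-$ is precisely what allows diagonal extraction from the two spliced sequences to produce consistent limits. De Giorgi-Letta then extends $K(M,\cdot)$ to a Borel measure $\mu_M$ on $\omega$.

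Plugging the trivial test $\psi\equiv 0$ into the definition of $K$ together with property (Q1) gives, for constant $M_1,M_2$,
$$K(M,A)\leq\beta\int_{A\times I}|\iota(M_1+x_3M_2)|^2\,dx=\beta|A|\Bigl(|M_1|^2+\tfrac{1}{12}|M_2|^2\Bigr),$$
so $\mu_M$ is absolutely continuous with respect to Lebesgue measure and the Lebesgue-Besicovitch differentiation theorem supplies a density. Approximating general $L^2$ matrix fields $M_1,M_2$ by piecewise constant ones on balls $B(x'_0,r)\in\mathcal{D}$, and using Remark \ref{ocjenkv} with Lemma \ref{lem:ocjena} to propagate the approximation through $K$, one shows that this density depends only on the pointwise values $M_1(x'_0)$, $M_2(x'_0)$, producing the function $Q(x',M_1,M_2)$ for which \eqref{integralineq} holds.

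The quadratic form property of $Q(x'_0,\cdot,\cdot)$ is then inherited from that of $Q^{h_n}$ via the parallelogram identity: taking recovery sequences $\psi^{h_n}_{(i)}$ attaining the minima in Remark \ref{minat2} for two strain pairs $(M_1^{(i)},M_2^{(i)})$, the linearity of the admissibility condition implies that $\psi^{h_n}_{(1)}\pm\psi^{h_n}_{(2)}$ is admissible for the sum/difference of the strains, and the pointwise parallelogram identity for $Q^{h_n}(x,\cdot)$ propagates through integration and, after differentiation with respect to $A$, to $Q(x'_0,\cdot,\cdot)$. Together with homogeneity of degree two (by the rescaling $\psi\mapsto\lambda\psi$) this yields the quadratic form structure. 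The upper bound in (Q'1) is already the estimate displayed above (with $\tfrac{1}{12}|M_2|^2\leq|M_2|^2$); the lower bound is more delicate, and I would derive it by using $Q^{h_n}\geq\alpha|\sym\cdot|^2$, expanding $|\sym(\iota(M)+\nabla_{h_n}\psi^{h_n})|^2$, and invoking a Korn-Poincar\'e type analysis on the thin slab together with the fact that the admissibility constraint forbids the trivial affine zero-energy corrector. The main obstacle I expect is precisely the fundamental estimate in the second paragraph: because $\nabla_{h_n}$ renders the vertical component of $\psi^{h_n}$ an order of magnitude larger than the horizontal ones, controlling the transition energy uniformly in $h_n$ forces one to choose the cut-off depending only on $x'$ and to carefully exploit the weighted convergence $h_n\psi^{h_n}_3\to 0$.
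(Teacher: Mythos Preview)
Your overall architecture---De Giorgi--Letta measure extension, Radon--Nikodym density, then parallelogram identity---is exactly what the paper does. But the subadditivity step, which you correctly flag as the crux, has a real gap that your proposed fix does not close.

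With a cut-off $\eta=\eta(x')$ one computes
\[
\sym\nabla_{h_n}(\eta\psi^{h_n})=\eta\,\sym\nabla_{h_n}\psi^{h_n}+\sym\big(\psi^{h_n}\otimes(\nabla'\eta,0)\big),
\]
and the $(3,\alpha)$ entries of the correction are $\tfrac12\,\psi^{h_n}_3\,\partial_\alpha\eta$. The admissibility condition gives only $h_n\psi^{h_n}_3\to 0$ in $L^2$, not boundedness of $\psi^{h_n}_3$ itself; indeed the Griso decomposition shows $\psi^{h_n}_3\approx \varphi^{h_n}/h_n$ with $\varphi^{h_n}$ merely bounded in $H^2$, so $\psi^{h_n}_3$ is generically of order $1/h_n$. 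Hence the transition energy does not vanish, and no layer-averaging trick repairs this because the blow-up is uniform across the transition zone. The paper is explicit that it \emph{avoids} the De Giorgi slicing argument for this reason.

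The paper's route is different: it first establishes (Lemma~\ref{lem:glavnazamjena}) that on any $D\in\mathcal D$ the minimizing sequence for $K(M,D)$ can be replaced by one with equi-integrable $\sym\nabla_{h_n}\vartheta_n$ \emph{and} $\vartheta_n=0$ on $\partial D\times I$. This uses the Griso decomposition (Proposition~\ref{igor2}) to rewrite $\sym\nabla_{h_n}\psi^{h_n}=-x_3\iota(\nabla'^2\varphi^{h_n})+\sym\nabla_{h_n}\tilde\psi^{h_n}+o^{h_n}$, applies the Fonseca--M\"uller--Pedregal and Bocea--Fonseca equi-integrability lemmas separately to the two pieces, and then truncates near $\partial D$ (Lemma~\ref{nulizacija}). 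Once correctors vanish on the boundary, additivity on disjoint $D_1,D_2\in\mathcal D$ is immediate by zero-extension, and subadditivity (property (l) of Lemma~\ref{lem:svojstva}) follows from additivity plus the continuity of $K$ in $M$ (Lemma~\ref{lem:ocjena}), with no splicing at all.

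Two smaller remarks. For the lower bound in (Q'1) the paper does not use Korn--Poincar\'e: for $M\in C^1_c$ it simply drops to the $2\times 2$ block, bounds $Q^{h_n}\geq\alpha|M+\sym\nabla'(\chi_{n,1},\chi_{n,2})|^2$, and integrates the cross term by parts so that it pairs $\diver M$ against $(\chi_{n,1},\chi_{n,2})\to 0$; density then gives the general case. And for the parallelogram law, the paper records only the inequality $K(M_1+M_2,\cdot)+K(M_1-M_2,\cdot)\leq 2K(M_1,\cdot)+2K(M_2,\cdot)$ together with degree-two homogeneity, which via Proposition~\ref{kvforme} already forces quadraticity of the density.
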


The following lemma tells us what can be alternative to the Assumption \ref{ass:main}.
\begin{lemma} \label{lem:pojasnjenje}
Let $(h_n)_{n \in \N}$ be a sequence monotonly decreasing to zero.
Assume that for almost every $x' \in \omega$ there exists a sequence $(r_m^{x'})_{m\in \N}$ converging to zero
and numbers $K\left(M,B(x',r_m^{x'})\right)$ such that
\begin{eqnarray*}
&&K\left(M,B(x',r_m^{x'})\right):=K^{-}_{(h_n)_{n \in \N}}\left(M,B(x',r_m^{x'})\right)=K^{+}_{(h_n)_{n \in \N}}\left(M,B(x',r_m^{x'})\right), \\ && \hspace{10ex}\ \forall m \in \N, M=M_1+x_3M_2; M_1,M_2 \in \R^{2 \times 2}_{\sym}.
\end{eqnarray*}
Then for every $M \in \mathcal{S}_{vK}(\omega)$, $A \subset \omega$ open there exists $K(M,A)$ such that  the following property holds
$$K(M,A)=K^{-}_{(h_n)_{n \in \N}}(M,A)=K^{+}_{(h_n)_{n \in \N}}(M,A).$$
\end{lemma}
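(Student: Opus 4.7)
The plan is to upgrade the hypothesis (equality $K^-=K^+$ on Vitali-type balls $B(x',r^{x'}_m)$ for constant symmetric matrices $M=M_1+x_3M_2$) to equality on every open $A\subset\omega$ and every $M\in\mathcal S_{vK}(\omega)$. The proof combines a Vitali covering argument with the stability/continuity estimates supplied by Lemma~\ref{lem:ocjena}.

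\textbf{Set-function properties of $K^\pm$.} For fixed $M$, I would first record basic properties of the map $A\mapsto K^\pm(M,A)$: monotonicity under inclusion; super-additivity of $K^-$ on disjoint open sets (an admissible sequence on a disjoint union restricts to admissible sequences on each part, and the $\liminf$ of a sum dominates the sum of $\liminf$'s); sub-additivity of $K^+$ on disjoint open sets (concatenate near-optimal sequences supported in the two pieces, using that $\limsup$ of a sum is bounded by the sum of $\limsup$'s); and the a priori bound $K^\pm(M,A)\le\beta\|\iota M\|_{L^2(A\times I)}^2$, obtained by testing with $\psi\equiv 0$ and invoking (Q1). As a consequence, on any \emph{finite} disjoint union $V=\bigsqcup_j B_j$ of balls for which the hypothesis yields equality, $K^-(M,V)=K^+(M,V)=\sum_j K(M,B_j)$.

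\textbf{Vitali covering and passing to open $A$.} Fix constant $M=M_1+x_3M_2$ and open $A\subset\omega$. By the hypothesis, the family $\mathcal F_A$ of balls $B(x',r^{x'}_m)\subset A$ (with $x'\in A$ admissible, $m\in\N$) is a Vitali cover of $A$ up to a null set; extract a countable disjoint subfamily $\{B_j\}_{j\in\N}\subset\mathcal F_A$ with $|A\setminus\bigcup_j B_j|=0$. Setting $V_N=\bigsqcup_{j=1}^N B_j$, monotonicity and the finite-union case yield
\[
K^-(M,A)\ge K^-(M,V_N)=\sum_{j=1}^N K(M,B_j).
\]
For the matching upper bound, decompose $A=V_N\sqcup(A\setminus\overline{V_N})$ (modulo the null set $\partial V_N\cap A$), apply sub-additivity of $K^+$, and use the trivial sequence $\psi\equiv 0$ on $A\setminus\overline{V_N}$:
\[
K^+(M,A)\le K^+(M,V_N)+K^+(M,A\setminus\overline{V_N})\le \sum_{j=1}^N K(M,B_j)+\beta|\iota M|^2\,|A\setminus V_N|.
\]
Letting $N\to\infty$ makes the remainder vanish, yielding $K^+(M,A)\le \sum_j K(M,B_j)\le K^-(M,A)$, hence the desired equality $K^-(M,A)=K^+(M,A)$.

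\textbf{From constants to $M\in\mathcal S_{vK}(\omega)$.} For a general $M=M_1+x_3M_2$ with $M_1,M_2\in L^2(\omega,\R^{2\times 2}_{\sym})$, approximate $M_1,M_2$ in $L^2$ by fields $M_1^\eps,M_2^\eps$ that are piecewise constant on the balls of a Vitali subfamily of $\mathcal F_A$. Applying the previous step ball-by-ball (the hypothesis holds on each Vitali ball where $M^\eps$ is constant) and summing via the set-function properties, one obtains $K^-(M^\eps,A)=K^+(M^\eps,A)$. Lemma~\ref{lem:ocjena} provides continuity in the matrix argument, $|K^\pm(M,A)-K^\pm(M^\eps,A)|\to 0$ as $\eps\to 0$, and thus the equality passes to the limit.

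\textbf{Main obstacle.} The delicate point is the sub-additivity of $K^+$ on disjoint open pieces of $A$. Admissible test sequences satisfy only $(\psi_1,\psi_2,h_n\psi_3)\to 0$ in $L^2$, so $\psi_3^n$ may grow like $h_n^{-1}$; a naive $x'$-dependent cutoff of a near-optimal sequence on $B_j$ then produces a term $\psi_3^n\nabla'\phi$ whose $L^2$ norm does not vanish, preventing us from gluing test sequences into a global $H^1(A\times I)$ admissible sequence. Handling this requires either invoking Remark~\ref{minat2} (existence of actual minimizers) together with a careful choice of $r^{x'}_m$ from the countable family $\{r^{x'}_m\}_m$ to arrange boundary decay of the minimizers, or inserting a thin buffer annulus near $\partial B_j$ whose energetic cost is absorbed using the bound from Lemma~\ref{lem:ocjena}. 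This interface control is the technical heart of the argument.
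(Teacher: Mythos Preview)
Your direct Vitali-covering strategy is genuinely different from the paper's argument, and you have correctly located the real difficulty: the sub-additivity $K^+(M,A)\le K^+(M,V_N)+K^+(M,A\setminus\overline{V_N})$ requires gluing near-optimal test sequences across $\partial V_N$, and the possible growth $\psi_3^{h_n}\sim h_n^{-1}$ obstructs a naive cutoff. Your two proposed fixes are not yet convincing. The ``careful choice of $r_m^{x'}$'' does not by itself produce boundary decay of minimizers. The buffer-annulus idea is the right direction, but to absorb the energy in the annulus you need more than Lemma~\ref{lem:ocjena}: you need the minimizing sequence on each $B_j$ to have equi-integrable symmetrized scaled gradients, which in this paper is obtained only after the Griso decomposition and the equi-integrability replacements of Lemma~\ref{lem:glavnazamjena}. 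So making your route rigorous essentially forces you to redevelop that machinery on the hypothesis balls, which is possible (the hypothesis $K^-=K^+$ on each $B_j$ is exactly what Lemma~\ref{lem:glavnazamjena} needs) but not free.

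The paper sidesteps the gluing problem entirely by an indirect subsequence argument. By Lemma~\ref{podniz1} it suffices to show that every subsequence of $(h_n)$ has a further subsequence along which $K^-$ takes a value $K(M,D)$ independent of the subsequence chosen. Lemma~\ref{podniz} supplies such a further subsequence satisfying Assumption~\ref{ass:main}, and then Proposition~\ref{identi} yields the integral representation $K(M,D)=\int_D Q(x',M_1,M_2)\,dx'$ with $Q$ given pointwise by the limit formula \eqref{defQ}. The crucial observation is that \eqref{defQ} expresses $Q(x',M_1,M_2)$ solely through the numbers $K(M_1+x_3M_2,B(x',r))$ for small $r$, and by hypothesis these coincide with $K^\pm_{(h_n)}$ for the \emph{full} sequence on the balls $B(x',r_m^{x'})$; hence $Q$, and therefore $K(M,D)$, does not depend on which subsequence was extracted. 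Lemma~\ref{podniz1} then closes the argument. This route buys you all the set-function properties (including sub-additivity, Lemma~\ref{lem:svojstva}(l)) for free once Assumption~\ref{ass:main} holds on the subsequence, so the gluing is never done by hand on the original sequence.
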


Denote by $I^0:\mathcal{A}(\omega) \to \R^+_0$ the functional
$$ I^0(u,v)=\int_\omega Q(x', \sym \nabla u+\tfrac{1}{2} \nabla v \otimes \nabla v,-\nabla^2 v)\, dx'. $$
Notice that for $(\ua_1,v_1),(\ua_2,v_2)\in\mathcal A(\omega)$ with $(\ua_1,v_1)\sim(\ua_2,v_2)$ we have $I^0(\ua_1,v_1)=I^0(\ua_2,v_2)$.

The following two theorems are  the main results of the paper.

\begin{theorem}
  \label{thm:1}
  Let  $(h_n)_{n\in \N}$ be a sequence for which the Assumption~\ref{ass:main} is satisfied.
  \begin{enumerate}[(i)]
  \item (Compactness).  Let $(\ya^{h_n})_{n\in\N}\subset H^1(\Omega,\R^3)$ be a sequence
    with equibounded energy, that is $\limsup\limits_{n\to
          \infty}I^{h_n}(\ya^{h_n})<\infty$.
    Then there exists $(\bar\Ra,\ua,v)\in\SO 3\times\mathcal
    A(\omega)$ such that $\ya^{h_n}\to (\bar\Ra,\ua,v)$ up to a subsequence.

  \item (Lower bound). Let $(\ya^{h_n})_{n \in\N}\subset H^1(\Omega,\R^3)$ be a sequence
    with equibounded energy. Assume that $\ya^{h_n}\to(\bar\Ra,\ua,v)$. Then
    \begin{equation*}
      \liminf\limits_{n\to
        \infty}I^{h_n}(\ya^{h_n})\geq I^0(\ua,v).
    \end{equation*}
  \end{enumerate}
\end{theorem}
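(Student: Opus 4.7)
The plan for part (i) (compactness) is to apply the geometric rigidity theorem of Friesecke--James--M\"uller on overlapping cubes of side of order $h_n$ as in \cite{FJM-06}, producing a rotation field $R^{h_n} : \omega \to \SO 3$ with $\|\nabla_{h_n} \ya^{h_n} - R^{h_n}\|_{L^2(\Omega)} \le C h_n^2$ and $\|\nabla R^{h_n}\|_{L^2(\omega)} \le C h_n$, together with an average $\bar R^{h_n}$ converging, up to subsequence, to some $\bar R \in \SO 3$. Proposition 3.1 of \cite{NeuVel-12}, applied to $\ya^{h_n}$ after ``unwinding'' by $\bar R^{h_n}$, then yields the decomposition of Definition~\ref{def:2} with $\ua^{h_n}$ bounded in $H^1(\omega,\R^2)$ and $v^{h_n}$ bounded in $H^2(\omega)$; standard weak compactness together with the compact embedding of $H^2(\omega)$ into $L^2(\omega)$ gives the convergences in \eqref{eq:ig00} along a subsequence, with limit $(\ua,v)\in\mathcal A(\omega)$.

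For part (ii) (lower bound), using the rotations from (i) define the rescaled symmetric strain
\[
G^{h_n}(x) := \sym\Big[\tfrac{1}{h_n^2}\bigl((\bar R^{h_n})^T \nabla_{h_n} \ya^{h_n}(x) - \id\bigr)\Big],
\]
which is $L^2(\Omega,\R^{3\times 3})$-bounded by rigidity. A refinement of Proposition 3.1 of \cite{NeuVel-12} provides the structural decomposition
\[
G^{h_n} = \iota(M_1^{h_n} + x_3 M_2^{h_n}) + \sym\nabla_{h_n}\psi^{h_n} + \eta^{h_n},
\]
where $M_j^{h_n} \to M_j$ in $L^2(\omega,\R^{2\times 2}_{\sym})$ with $M_1 := \sym\nabla\ua + \tfrac12\nabla v\otimes\nabla v$ and $M_2 := -\nabla^2 v$; where $(\psi_1^{h_n},\psi_2^{h_n},h_n\psi_3^{h_n}) \to 0$ in $L^2(\Omega,\R^3)$, which is exactly the admissibility condition underlying the definitions \eqref{gli}--\eqref{gls} of $K^{\pm}$; and where $\eta^{h_n} \to 0$ in $L^2$. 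Frame indifference \eqref{ass:frame-indifference} combined with \eqref{eq:94} yields the pointwise bound
\[
\tfrac{1}{h_n^4} W^{h_n}(x,\nabla_{h_n}\ya^{h_n}) \;\ge\; Q^{h_n}(x,G^{h_n}) - |G^{h_n}|^2 r(h_n^2|G^{h_n}|),
\]
and an equi-integrability/truncation argument of Fonseca--M\"uller--Pedregal type (the ingredient emphasized in the introduction of the paper) replaces $G^{h_n}$ by a modification agreeing with it off a set of asymptotically vanishing measure and bounded pointwise by $o(h_n^{-1})$, making the remainder negligible. One concludes $\liminf_n I^{h_n}(\ya^{h_n}) \ge \liminf_n \int_\Omega Q^{h_n}(x,G^{h_n})\,dx$.

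Setting $M := M_1 + x_3 M_2$, Remark~\ref{ocjenkv} together with $\|M^{h_n}-M\|_{L^2} + \|\eta^{h_n}\|_{L^2} \to 0$ and the $L^2$-boundedness of $G^{h_n}$ allows one to replace $G^{h_n}$ inside $Q^{h_n}$ by $\iota(M) + \nabla_{h_n}\psi^{h_n}$ without altering the liminf. For each $A \in \mathcal D$ with $A \ll \omega$, the definition of $K^-$, Assumption~\ref{ass:main} and Proposition~\ref{identi} then give
\[
\liminf_{n\to\infty} \int_{A\times I} Q^{h_n}\bigl(x,\iota(M)+\nabla_{h_n}\psi^{h_n}\bigr)\,dx \;\ge\; K(M,A) \;=\; \int_A Q(x',M_1,M_2)\,dx',
\]
and letting $A$ invade $\omega$ through $\mathcal D$ recovers $\liminf_n I^{h_n}(\ya^{h_n}) \ge I^0(\ua,v)$. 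The principal obstacle is the nonlinear-to-quadratic reduction: the Taylor remainder $|G|^2 r(|G|)$ in \eqref{ass:expansion} is not controllable from $L^2$-boundedness of $G^{h_n}$ alone, and its resolution by equi-integrability modification, rather than by higher integrability of minimizers as in \cite{SW94}, is one of the essential novelties of the argument; a secondary technicality is the exhaustion of $\omega$ by elements of $\mathcal D$ in order to access the identification hypothesis Assumption~\ref{ass:main}.
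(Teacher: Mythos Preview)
Your linearization step contains a genuine error. With the \emph{constant} rotation $\bar R^{h_n}$, the quantity $(\bar R^{h_n})^T\nabla_{h_n}\ya^{h_n}-\id$ is only of order $h_n$ in $L^2$, not $h_n^2$: writing $(\bar R^{h_n})^T\nabla_{h_n}\ya^{h_n}=\id+h_nK^{h_n}+h_n^2\tilde G^{h_n}$ with $K^{h_n}$ skew-symmetric and bounded in $L^4$, the expansion \eqref{eq:94} applied to $A:=h_nK^{h_n}+h_n^2\tilde G^{h_n}$ gives an error $|A|^2 r(|A|)$ which, after dividing by $h_n^4$, behaves like $h_n^{-2}|K^{h_n}|^2\,r(h_n|K^{h_n}|)$ and is \emph{not} negligible. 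Your displayed bound $|G^{h_n}|^2 r(h_n^2|G^{h_n}|)$ simply does not follow. Relatedly, your $G^{h_n}=\sym\tilde G^{h_n}$ misses the order-one term $-\tfrac12(K^{h_n})^2$; it is precisely this term that produces the $\tfrac12\nabla v\otimes\nabla v$ contribution to $M_1$, so your claimed identification of $M_1$ cannot come from the decomposition of your $G^{h_n}$. The paper avoids this by working with the \emph{nonlinear} strain $E^{h_n}=h_n^{-2}\bigl(\sqrt{(\nabla_{h_n}\ya^{h_n})^t\nabla_{h_n}\ya^{h_n}}-\id\bigr)$: frame indifference via polar factorization gives $W^{h_n}(x,\nabla_{h_n}\ya^{h_n})=W^{h_n}(x,\id+h_n^2E^{h_n})$ on the set $\{\det\nabla_{h_n}\ya^{h_n}>0\}$, and a separate Taylor lemma (Lemma~\ref{lem:3}) shows that on a large set $E^{h_n}\approx\sym\tilde G^{h_n}-\tfrac12(K^{h_n})^2$, from which the correct $M_1$ emerges.

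A second, more structural omission concerns the equi-integrability step. You describe it as a device to tame the Taylor remainder, but in the paper its role is different and more delicate: after truncation one has $\int_\Omega\chi_k\,Q^{h_{n(k)}}(x,\iota(M)+\nabla_{h_{n(k)}}\vartheta_k)\,dx$ with $\chi_k\to 1$ boundedly in measure, and equi-integrability of $|\sym\nabla_{h_{n(k)}}\vartheta_k|^2$ is what allows one to drop $\chi_k$ and thereby compare with $K^{-}$. The difficulty is that one cannot directly replace $\psi^{h_n}$ by a sequence with equi-integrable $|\sym\nabla_{h_n}\psi^{h_n}|^2$ while keeping the symmetrized-scaled-gradient structure. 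The paper's remedy is the Griso-type decomposition (Proposition~\ref{igor2}): one writes $\sym\nabla_{h_n}\psi^{h_n}=-x_3\iota(\nabla'^2\varphi^{h_n})+\sym\nabla_{h_n}\tilde\psi^{h_n}+o^{h_n}$ with $\varphi^{h_n}\to 0$ in $H^1$ and $\tilde\psi^{h_n}\to 0$ in $L^2$, and then applies the equi-integrability lemmas \emph{separately} to the Hessian sequence and to the scaled-gradient sequence. This structural splitting is essential and is absent from your outline.
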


\begin{theorem}
  \label{thm:up1}
  Let $(h_n)_{n\in \N}$ be a sequence monotonly decreasing to zero for which the Assumption~\ref{ass:main} is satisfied.
  Then for every $(\bar\Ra,\ua,v)\in\SO
    3\times\mathcal A(\omega)$ there exists a subsequence, still denoted by $(h_n)_{n \in \N}$, such that
   $(\ya^{h_n})_{n \in \N} \subset
    H^1(\Omega,\R^3)$ with
    \begin{equation*}
      \ya^{h_n}\to(\bar\Ra,\ua,v)\qquad\text{and}\qquad
      \lim\limits_{n \to
        \infty}I^{h_n}(\ya^{h_n})=I^0(\ua,v).
    \end{equation*}
\end{theorem}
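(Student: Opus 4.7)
The plan is to build the recovery sequence via a three-tier approximation. First, by the continuity of $I^0$ in $(u,v)$ guaranteed by (Q'1) together with density of smooth functions in $H^1(\omega,\R^2)\times H^2(\omega)$, one reduces to the case that $u$ and $v$ are smooth, so that $M_1:=\sym\nabla u+\tfrac12\nabla v\otimes\nabla v$ and $M_2:=-\nabla^2 v$ are continuous on $\bar\omega$. Second, one replaces $M_1$ and $M_2$ by simple functions that are constant on each cell of a finite partition $\{D_i\}_{i=1}^N\subset\mathcal D$ of $\omega$, with $|\omega\setminus\bigcup D_i|$ arbitrarily small; this is possible because $\mathcal D$ is dense and consists of $C^{1,1}$ domains, and the replacement error in the energy is controlled by the two-sided bound in (Q'1).

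Third, and crucially, for the piecewise constant target with values $M_1^{(i)},M_2^{(i)}\in\R^{2\times 2}_{\sym}$ on $D_i$, Assumption \ref{ass:main} combined with the attainment statement of Remark \ref{minat2} delivers, after passing to a subsequence, correctors $\psi^{h_n,i}\in H^1(D_i\times I,\R^3)$ with $(\psi^{h_n,i}_1,\psi^{h_n,i}_2,h_n\psi^{h_n,i}_3)\to 0$ in $L^2$ and
\[
\lim_{n\to\infty}\int_{D_i\times I}Q^{h_n}\bigl(x,\iota(M_1^{(i)}+x_3 M_2^{(i)})+\nabla_{h_n}\psi^{h_n,i}\bigr)\,dx=\int_{D_i}Q(x',M_1^{(i)},M_2^{(i)})\,dx',
\]
using that the equality $K^+=K^-$ forces $\limsup=\liminf$ along any optimizer of $K^+$. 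Before gluing, I would modify these $\psi^{h_n,i}$ so that the scaled strain $\iota(\cdot)+\nabla_{h_n}\psi^{h_n,i}$ is equi-integrable on $D_i\times I$, by truncation on super-level sets of the scaled gradient in the spirit of \cite{FJM-02,NeuVel-12}; because the discarded piece has small $L^2$ mass and the quadratic $Q^{h_n}$ is controlled by the symmetric part of the scaled gradient via (Q1), this truncation does not change the limit above.

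The global corrector $\psi^{h_n}$ is assembled from the $\psi^{h_n,i}$ via cut-offs $\eta_\delta^{(i)}$ supported in $D_i$ and equal to $1$ outside a $\delta$-tube about $\partial D_i$; since each $D_i$ is $C^{1,1}$, the boundary contribution has measure $O(\delta)$ and is controlled by the equi-integrability secured above. The recovery deformation is the standard von K\'arm\'an ansatz
\[
y^{h_n}(x)=\bar\Ra\!\left[\begin{pmatrix}x'\\ h_n x_3\end{pmatrix}+h_n\begin{pmatrix}-x_3\nabla v(x')\\ v(x')\end{pmatrix}+h_n^2\begin{pmatrix}u(x')\\ d(x',x_3)\end{pmatrix}+h_n^2\psi^{h_n}(x)\right],
\]
where $d$ is a smooth Cosserat adjustment absorbing the residual $e_3\otimes e_3$ strain at order $h_n^2$. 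A direct differentiation gives $\tfrac{1}{h_n^2}\sym(\bar\Ra^T\nabla_{h_n}y^{h_n}-\id)=\iota(M_1+x_3 M_2)+\sym\nabla_{h_n}\psi^{h_n}+o(1)$ in $L^2$, while the antisymmetric part is $O(h_n)$. Frame indifference \eqref{ass:frame-indifference}, the quadratic expansion \eqref{ass:expansion}, equi-integrability of the scaled strain (which turns the $o(|G|^2)$ remainder into a vanishing integral), and Remark \ref{ocjenkv} together give $I^{h_n}(y^{h_n})\to\sum_i\int_{D_i}Q(x',M_1^{(i)},M_2^{(i)})\,dx'$. An Attouch-type diagonal argument chaining the three approximations then produces a single subsequence along which $y^{h_n}\to(\bar\Ra,u,v)$ in the sense of Definition \ref{def:2} and $I^{h_n}(y^{h_n})\to I^0(u,v)$.

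The principal obstacle is the \emph{equi-integrability} step: the correctors supplied by Assumption \ref{ass:main} are constrained only in strong $L^2$, so their scaled gradients can concentrate and one cannot naively pass from the quadratic $Q^{h_n}$ to the full nonlinear $W^{h_n}$. Taming this concentration without spoiling the energy convergence, exactly the point stressed in the introduction, is the non-routine ingredient; by contrast, the gluing across cells of $\mathcal D$ and the diagonal extraction reduce to standard cut-off and Urysohn-type estimates once the equi-integrable correctors are in place.
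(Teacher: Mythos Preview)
Your outline has the right architecture (smooth reduction, correctors realizing $K(M,\cdot)$, von K\'arm\'an ansatz, diagonalization), but there is a genuine structural gap in how you insert the corrector into the ansatz.

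The correctors $\psi^{h_n,i}$ that you extract from Assumption~\ref{ass:main} and Remark~\ref{minat2} are controlled only through $(\psi_1,\psi_2,h_n\psi_3)\to 0$ in $L^2$ and $\|\sym\nabla_{h_n}\psi^{h_n}\|_{L^2}\le C$ (the latter by comparison with the zero test function and (Q1)). Nothing bounds the \emph{full} scaled gradient: by Theorem~\ref{grisotm} such a $\psi^{h_n}$ generically carries a bending mode $\hat\psi_3\sim\varphi/h_n$ whose in-plane derivatives make $\|\nabla_{h_n}\psi^{h_n}\|_{L^2}\sim 1/h_n$. If you plug $h_n^2\psi^{h_n}$ additively into the ansatz, then $\nabla_{h_n}y^{h_n}=I+h_nK+h_n^2G$ with $G\supset\nabla_{h_n}\psi^{h_n}$ unbounded in $L^2$; your claim that ``the antisymmetric part is $O(h_n)$'' is true only in $L^2$, not pointwise. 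Since (W2) gives an upper bound on $W^{h_n}$ only when $\dist(F,\SO3)\le\sqrt\rho$, on the (small but nonempty) set where $|h_n^2G|$ is large the energy $W^{h_n}(\nabla_{h_n}y^{h_n})$ may be $+\infty$. Thus neither your Taylor expansion nor the statement ``equi-integrability of the scaled strain turns the $o(|G|^2)$ remainder into a vanishing integral'' is justified; Lemma~\ref{L:linearization} requires $h_n^2\|\widetilde E^{h_n}\|_{L^\infty}\to 0$, not equi-integrability.

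The paper cures exactly this: via Griso's decomposition (Proposition~\ref{igor2}, packaged in Lemma~\ref{lem:improveglavnazamjena}) the corrector is split as $\sym\nabla_{h_n}\vartheta_n=-x_3\iota(\nabla'^2\varphi_n)+\sym\nabla_{h_n}\tilde\psi_n$ with $(|\nabla'^2\varphi_n|^2)$ and $(|\nabla_{h_n}\tilde\psi_n|^2)$ equi-integrable and the \emph{full} $\nabla_{h_n}\tilde\psi_n$ bounded. The bending piece $\varphi_n$ is then inserted into the ansatz at order $h_n$ in the vertical slot (replacing $v$ by $v+\varphi_n^\lambda$), not at order $h_n^2$; only the residual $\tilde\psi_n$ enters additively at order $h_n^2$. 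A further Lipschitz truncation (Corollaries~\ref{kor:ekvi00} and~\ref{nemoguce2}) upgrades both pieces to $W^{2,\infty}$ resp.\ $W^{1,\infty}$ with bound $C\lambda$, which is what makes $h_n^2\|E_n^\lambda\|_{L^\infty}\to 0$ and licenses Lemma~\ref{L:linearization}. Your ``truncation on super-level sets of the scaled gradient'' cannot play this role unless you first have an $L^2$ bound on the full scaled gradient to truncate against, and that bound is precisely what Griso's decomposition supplies. Incidentally, once Lemma~\ref{lem:improveglavnazamjena} is available it delivers a single equi-integrable corrector on all of $\omega$, so the piecewise-constant approximation and the gluing across $\{D_i\}$ become unnecessary.
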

\begin{remark} \label{nappprep}
Although  Theorem \ref{thm:up1} is stated in the form such that subsequence can depend on $(\bar{R},u,v)$, by separability and diagonal procedure we conclude that there exists a subsequence such that the claim of the Theorem is valid for every $(\bar\Ra,\ua,v)\in\SO
    3\times\mathcal A(\omega)$. Nevertheless, it can be easily checked that Theorem \ref{thm:1} and Theorem \ref{thm:up1} do imply in their form the convergence of minimizers  to the minimizer of the limit functional (if we also add to them appropriate external loads, see \cite{FJM-06}), on every subsequence of the sequence $(h_n)_{n\in \N}$, where they are converging. Thus we can even the statement of  Theorem \ref{thm:up1} in its form look at as upper bound statement.
\end{remark}
\subsubsection{Strategy of the proofs} \label{stefan101}
The proofs use some standard approach of $\Gamma$-convergence adapted to this situation with specific compactness result. The essential part of the proof of Theorem \ref{thm:1} consists in writing the sequence of limiting strains given by
$$ E^{h}=\frac{\sqrt{(\nabla_{h} y^{h})^t \nabla_{h} y^{h}}-I}{{h}^2},$$
in the form (up to term which converges to zero strongly in $L^2$ and on a "large set")
 \begin{equation} \label{forma1}
 E^{h}\approx\underbrace{\sym \nabla u-\frac{1}{2} \nabla v \otimes \nabla v-x_3 \nabla^2 v}_{\textrm{limiting strain}}+\sym \nabla_{h} \psi^{h},
 \end{equation}
where $(\psi^{h}_1,\psi^{h}_2,{h}\psi^{h}_3) \to 0$ strongly in $L^2$ and $(\sym \nabla_{h} \psi^{h})_{{h}>0}$ is bounded in $L^2$. In order to do that we use the result \cite[Proposition 3.1]{NeuVel-12} which identifies the behavior of deformations of energy of order $h^4$ (von K\'arm\'an regime). This identification enables us to define the limiting energy, where we relax the energy with respect to the field $\nabla_{h} \psi^{h}$.
This naturally imposes that in the asymptotic formulae we separate fixed $L^2$ field which represents the limit strain and variable relaxation field $\nabla_{h} \psi^{h}$ (since the density satisfies the property (Q1)  defining the relaxation field as $\nabla_{h} \psi^{h}$ or $\sym \nabla_{h} \psi^{h}$ is the same). That is how we come to the definition of the functional $K(\cdot,\cdot)$. This separation is not done in standard $\Gamma$-convergence techniques, but here it is also helpful for proving the properties of the functional $K(\cdot,\cdot)$, i.e., to establish its representation, given in Proposition \ref{identi}.

In order to make the lower bound we need to use the truncation argument, but we have to preserve the structure of symmetrized scaled gradients.
This is done by using a bit modified Griso's decomposition which  says that (see Proposition \ref{igor2})
\begin{equation} \label{forma2}
 \psi^{h}\approx\left(\begin{array}{c} 0\\ 0\\ \tfrac{\varphi^h}{h} \end{array} \right)-x_3 \left(\begin{array}{c} \partial_1 \varphi^h \\ \partial_2 \varphi^h\\ 0 \end{array} \right)+ \bar{\psi}^{h},
\end{equation}
i.e.,
\begin{equation} \label{forma2}
\sym \nabla_{h} \psi^{h}=-x_3\iota(\nabla'^2 \varphi^{h})+\sym \nabla_{h} \bar{\psi}^{h},
\end{equation}
up to a term converging to zero strongly in $L^2$. Here we have that $\varphi^{h} \to 0$ ($\varphi^{h}$ are only functions of in-plane variables) and $\bar{\psi}^{h} \to 0$ strongly in $L^2$ and $(\nabla^2 \varphi^{h})_{h>0}$ and $(\nabla_{h} \bar{\psi}^{h})_{h>0}$ are bounded in $L^2$.
(in fact, a slight modification of \cite[Proposition 3.1]{NeuVel-12}, that is done directly in the proof of lower bound,  gives us directly the strain in  this form, i.e., in expression (\ref{forma1}), $\sym \nabla_{h} \psi^{h}$  is replaced by (\ref{forma2})).
We can then replace both of these sequences by equi-integrable sequences recalling some standard results from the literature (see \cite{FoMuPe98,BoceaFon02,BraidesZeppieri07}). The adaptation of these standard results to this situation is analyzed in the Appendix.
 The proof of Proposition \ref{identi} follows, in its basic ideas, the standard approach for obtaining integral representation (and for proving quadraticity). Of course, we use the convenience that we are dealing with quadratic functionals. Instead of using De Giorgi slicing argument, we again use the equi-integrability  property of minimizing sequence (for this we now need Griso's decomposition).
 In the proof of the upper bound we use the asymptotic formulae and we replace the relaxation field with the field of scaled gradients bounded in $L^\infty$. This can be done by adaptation of Lusin's type approximation already exploited in the derivation of lower dimensional models (see \cite{FJM-02}).
 The basic fact is that we can replace the equi-integrable sequence by $L^\infty$ sequence, and that the estimate of the error is uniform with respect to the index of the sequence.
 To summarize: we prove lower and upper bound by using only equi-integrability of the minimizing sequence; Griso's decomposition is used to write asymptotic formulae in different way, i.e., to obtain expression (\ref{forma2}), while \cite[Proposition 3.1]{NeuVel-12} is used to decompose the sequence of strains in the form given by the expressions (\ref{forma1}) and (\ref{forma2}).

\section{Proofs} \label{sectiondokazi}
\subsection{Proof of Lemma \ref{L:1.2}}
\begin{proof}
Without loss of generality we can assume that $\bar{R}=I$.
Assume that $\ya^{h}\to (\id,\ua,v)$ and $\ya^{h}\to
(\widetilde\Ra,\widetilde\ua,\widetilde v)$. Then, by definition, there exist two sequences $(\bar
\Ra^{h},\ua^{h},v^{h})$ and $(\widetilde \Ra^{h},\tilde \ua^{h}, \tilde
v^{h})$ with
\begin{eqnarray} \label{eq:ig1}
& &\ua^{h}\to\ua,\ \widetilde u^{h} \to \widetilde \ua \text{ in
}L^2,\qquad v^{h}\to v,\ \widetilde v^{h} \to \widetilde v \text{ in }L^2,
\\ \nonumber& & \hspace{+20ex} \bar\Ra^{h}\to\id,\ \widetilde \Ra^{h} \to \widetilde \Ra,
\end{eqnarray}
as $n\to \infty$, and
\begin{eqnarray*}
  \fint_I\ya^{h}(x',x_3)\,dx_3=
  \bar\Ra^{h}
  \left(\begin{array}{c}
      x'+{h}^2\ua^{h}\\
      {h} v^{h}
    \end{array}\right)
  = \widetilde\Ra^{h}
  \left(\begin{array}{c}
      x'+{h}^2\widetilde\ua^{h}\\
      {h} \widetilde v^{h}
    \end{array}\right).
\end{eqnarray*}
Rearranging terms and introducing
$\hat\Ra^{h}:=(\widetilde\Ra^{h})^T\bar\Ra^{h}$ yields
\begin{eqnarray} \label{eq:ig2}
  & &\big(\hat\Ra^{h}-\id \big) \left( \begin{array}{c} x' \\ 0 \end{array} \right)+{h}\hat\Ra^{h} \left( \begin{array}{c}0 \\ v^{h}(x')  \end{array} \right)+
  {h}^2\hat\Ra^{h} \left( \begin{array}{c}\ua^{h}(x') \\ 0  \end{array} \right)\\ & &= {h}\left( \begin{array}{c} 0 \\ \nonumber \tilde v^{h} (x') \end{array} \right)+
  {h}^2 \left( \begin{array}{c}\tilde \ua^{h} (x')\\ 0  \end{array} \right),
\end{eqnarray}
for almost every $x'\in\omega$ and all $h$. In the limit $h \to 0$ we get $\big(\hat\Ra-\id\big)
\left( \begin{array}{c} x' \\ 0 \end{array} \right)=0$. Combined
with $\hat\Ra\in\SO 3$, and
$\hat\Ra=\widetilde\Ra^T\bar\Ra=\widetilde\Ra^T$,  this implies $\widetilde\Ra=\id$.
\smallskip

Set $\hat\Aa^{h}:=\frac{\hat\Ra^{h}-\id}{{h}}$. We claim that there exists
$\hat\Aa\in\R^{3 \times 3}_{\skw}$ such that
\begin{eqnarray}
  \label{eq:ig:st1}
   \hat\Aa^{h}&\to&\hat\Aa\qquad\text{with }\sym\hat\Aa=0,\\
  \label{eq:ig:st2}
  \frac{\sym\hat\Aa^{h}}{{h}}&\to&\frac{1}{2}\hat\Aa^2.
\end{eqnarray}
Here comes the argument. Dividing (\ref{eq:ig2}) by ${h}$, and
rearranging terms, yields
\begin{eqnarray}
\label{eq:igggg6}
& &\hat\Aa^{h} \left( \begin{array}{c} x' \\ 0 \end{array} \right)+  \left( \begin{array}{c}0 \\ v^{h}(x')  \end{array} \right)
+{h} \hat\Aa^{h} \left( \begin{array}{c}0 \\ v^{h}(x')  \end{array} \right)+ {h} \left( \begin{array}{c}\ua^{h}(x') \\ 0  \end{array} \right)+{h}^2 \hat\Aa^{h} \left( \begin{array}{c}\ua^{h}(x') \\ 0  \end{array} \right) \\ & &= \left( \begin{array}{c}0 \\ \nonumber \tilde v^{h} (x') \end{array} \right)+
{h} \left( \begin{array}{c}\tilde \ua^{h} (x')\\ 0  \end{array} \right).
\end{eqnarray}
 We deduce that the first term of the left hand side converges in $L^2(\omega)$.
 This implies that
 $\hat\Aa^{h}\ee_\alpha$ converges as $h\to 0$. From the identity $\hat
\Ra^{h}\ee_3=\hat \Ra^{h}\ee_1\wedge\hat \Ra^{h}\ee_2$, we deduce that
\begin{eqnarray*}
  \hat\Aa^{h} \ee_3=(\hat\Aa^{h}\ee_1\wedge\hat \Ra^{h}\ee_2+\ee_1\wedge\hat\Aa^{h}\ee_2),
\end{eqnarray*}
and thus $\hat\Aa^{h}$ converges to some limit $\hat\Aa\in\R^{3\times 3}$.
Eventually, the relation $(\hat\Aa^{h})^T \hat\Aa^{h}=-2\frac{\sym \hat\Aa^{h}}{{h}}$
yields \eqref{eq:ig:st1} and \eqref{eq:ig:st2}.
\smallskip

To complete the argument, it remains to prove that
\begin{eqnarray}\label{eq:ig6:st3a}
  \tilde v(x')&=&v(x')+\bs a\cdot x'\qquad\text{where }\bs
  a:=(\hat\Aa_{31},\hat\Aa_{32})\\
  \label{eq:ig6:st3}
   \tilde   \ua(x')&=& \ua (x')+( \bs A-\tfrac{1}{2}\bs
    a\otimes\bs a)x'-v(x')\bs a
\end{eqnarray}
for some skew symmetric matrix $\Aa\in\R^{2 \times 2}_{\skw}$. The first identity
appears in the limit $n\to \infty$ in the third component of identity
\eqref{eq:igggg6}. For the proof of  \eqref{eq:ig6:st3} we introduce the
skew-symmetric matrix $\Aa^h\in\R^{2\times 2}_{\skw}$
\begin{equation}
\label{eq:ig7} \Aa^{h}_{\alpha \beta}=\frac{\hat
\Aa_{\alpha\beta}^{h}}{{h}}- \frac{(\sym \hat \Aa^{h})_{\alpha
\beta}}{{h}}, \textrm{ for } \alpha,\beta=1,2.
\end{equation}
Going back to \eqref{eq:igggg6}, after dividing by ${h}$, we find that
${h}^{-1}\hat\Aa_{\alpha\beta}^h$, $\alpha,\beta\in\{1,2\}$, converges as ${h}\to 0$.
This implies $\hat A_{\alpha \beta}=0$.
Combined with
\eqref{eq:ig:st2} we deduce that $\Aa^{h}$ converges to some
$\Aa\in\R^{2\times 2}_{\skw}$. Now, a calculation yields \eqref{eq:ig6:st3} i.e. we divide \eqref{eq:igggg6} by ${h}$ and let $h \to 0$ in the first two components.

\step 2 Argument for ``$\Leftarrow$''.

Suppose that $\ya^{h} \in L^2(\omega;\R^3)$ converges to the
triple $(\bar \Ra,\ua,v)$ in the sense of definition
(\ref{def:2}). Let us now take arbitrary $\Aa \in \R^{2 \times 2}_{\skw}$ and $\bs a \in \R^2$, and set
\begin{equation} \label{eq:ig10}
\tilde \Ra^{h}=\bar \Ra^{h}\exp(-{h}^2 \iota(\Aa))\exp(-{h} \bs a_e),
\end{equation}
where $  a_e \in \R^{3\times 3}$ is defined by
\begin{equation} \label{eq:ig11}
 a_e:=\left(\begin{array}{cc} 0  &-\bs a  \\ \bs a^T & 0 \end{array} \right).
\end{equation}
We define $\tilde \ua^{h}$, $\tilde v^{h}$ via identity \eqref{eq:ig000}.
From the expansions
\begin{equation} \label{eq:ig12}
  \exp({h}^2 \iota(\Aa))=\id+{h}^2\iota(\Aa)+O({h}^4), \qquad \exp({h}\bs a_e)=\id+{h}\bs a_e+\frac{{h}^2}{2} \bs a_e^2+O({h}^3),
\end{equation}
we conclude that
\begin{eqnarray*}
 \tilde   \ua^{h}(x')&=& \ua^{h} (x')+( \bs A-\tfrac{1}{2}\bs
    a\otimes\bs a)x'-v^{h}(x')\bs a+O({h}),\\
    \tilde v^{h}(x')&=& v^{h} (\hat
    x)+\bs a\cdot x'+O({h}),
\end{eqnarray*}
where $\|O({h})\|_{L^2} \leq C{h}$, for some $C>0$.
\end{proof}

\subsection{Characterization of the symmetrized scaled gradients}

The following theorem is proved in \cite{griso05}. We use it to obtain the characterization of the sequence $\psi^h \in H^1(\Omega,\R^3)$ which satisfies the property that $(\sym \nabla_h \psi^h)_{h>0}$ is bounded in $L^2$ and $(\psi^h_1,\psi^h_2,h\psi^h_3) \to 0$, strongly in $L^2$ (see Proposition \ref{igor2}). In the claims below when we put $C(A)$ it means that the constant depends on the domain $A$, but not on functions and variable $h$.
\begin{theorem}\label{grisotm}
Let $A \subset \omega$ with Lipschitz boundary and $\psi \in H^1(A \times I,\R^3)$ and $h>0$. Then we have the following decomposition
$$ \psi(x)=\hat{\psi}(x')+r(x') \wedge x_3 e_3+\bar{\psi}(x)=\left\{\begin{array}{l} \hat{\psi}_1(x')+r_2(x')x_3+\bar{\psi}_1(x) \\
\hat{\psi}_2(x')-r_1(x')x_3+\bar{\psi}_2(x)\\
\hat{\psi}_3(x')+\bar{\psi}_3(x)
 \end{array}
 \right., $$
where
\begin{equation} \label{igor0}
 \hat{\psi}=\int_I \psi \ dx_3,\ r=\frac{3}{2} \int_I x_3 e_3 \wedge \psi(x) \ dx_3,
\end{equation}
and the following estimate is valid
\begin{equation}\label{eq:111}
\| \sym \nabla_h (\hat{\psi}+r\wedge x_3 e_3)\|^2_{L^2}+\|\nabla_h \bar{\psi}\|^2_{L^2}+\tfrac{1}{h^2}\|\bar \psi \|^2_{L^2} \leq C(A) \|\sym \nabla_h \psi\|^2_{L^2}.
\end{equation}
\end{theorem}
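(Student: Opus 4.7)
Since Theorem \ref{grisotm} is cited from \cite{griso05}, the plan is to reconstruct Griso's plate decomposition argument. The approach is to verify the decomposition algebraically from the given formulas, and then to establish the quantitative estimate by combining a fibrewise Poincar\'e inequality for $\bar\psi$ with a uniform (in $h$) scaled Korn inequality on the reference domain $A\times I$.

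First, the decomposition $\psi = \hat\psi + r\wedge x_3 e_3 + \bar\psi$ is just a definition, with the remainder $\bar\psi$ determined by the explicit formulas for $\hat\psi$ and $r$. The crucial algebraic fact is that $\bar\psi$ has zero $x_3$-average on every fiber $\{x'\}\times I$: by construction $\int_I \bar\psi_3\,dx_3=0$ since $\hat\psi_3=\int_I\psi_3\,dx_3$, and for $\alpha\in\{1,2\}$ the term $r_\beta x_3$ integrates to zero over $I$. Consequently the one-dimensional Poincar\'e inequality yields $\|\bar\psi(x',\cdot)\|_{L^2(I)}^2 \leq C\|\partial_3\bar\psi(x',\cdot)\|_{L^2(I)}^2$ for a.e.\ $x'\in A$.

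Next, to obtain the $\tfrac{1}{h^2}\|\bar\psi\|_{L^2}^2$ bound it suffices to estimate $\|\partial_3\bar\psi\|_{L^2(A\times I)}$ by $h\,\|\sym\nabla_h\psi\|_{L^2(A\times I)}$. For the third component this is immediate, since $\partial_3\bar\psi_3 = \partial_3\psi_3 = h\,(\sym\nabla_h\psi)_{33}$. For the in-plane components one has $\partial_3\bar\psi_\alpha = \partial_3\psi_\alpha \mp r_\beta(x')$, and the identity $\tfrac{1}{h}\partial_3\psi_\alpha = 2(\sym\nabla_h\psi)_{\alpha 3} - \partial_\alpha\psi_3$ suggests writing things in terms of $\sym\nabla_h\psi$ modulo the "unwanted" term $h\,\partial_\alpha\psi_3$. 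This is handled by an integration-by-parts identity in the $x_3$-variable that relates the first moment $\int_I x_3\psi_\alpha\,dx_3$ (hence $r_\beta$, by the specific choice of the normalization constant $\tfrac{3}{2}$) to $\partial_3\psi_\alpha$ and $\partial_\alpha\psi_3$ in a way that exactly cancels the bad term.

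The hardest step, and the technical heart of Griso's theorem, is a \emph{uniform} scaled Korn inequality: for $v \in H^1(A\times I,\R^3)$ whose fibrewise zeroth and (appropriately weighted) first moments vanish, one has $\|\nabla_h v\|_{L^2(A\times I)} \leq C(A)\,\|\sym\nabla_h v\|_{L^2(A\times I)}$ with $C(A)$ independent of $h$. Applied to $v=\bar\psi$, this controls $\|\nabla_h\bar\psi\|_{L^2}^2$, and then the triangle inequality $\|\sym\nabla_h U_e\|_{L^2} \leq \|\sym\nabla_h\psi\|_{L^2} + \|\sym\nabla_h\bar\psi\|_{L^2}$, where $U_e:=\hat\psi + r\wedge x_3 e_3$, yields the estimate for the elementary displacement as well. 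The main obstacle is precisely establishing this uniform scaled Korn inequality: on the physical thin domain $A\times(-h/2,h/2)$ the usual Korn constant deteriorates like $1/h$, and it is exactly the choice of the elementary displacement (tailored to absorb the "rigid-like" affine-in-$x_3$ motions responsible for the blow-up) that produces the complementary subspace on which Korn's inequality becomes uniform in $h$.
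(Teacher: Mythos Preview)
The paper does not prove Theorem~\ref{grisotm} at all: it simply records the statement and cites \cite{griso05}. So there is no ``paper's own proof'' to compare against; your task was really to reconstruct Griso's argument.

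Your outline captures the architecture of that argument correctly: define $\bar\psi$ as the remainder after subtracting the elementary (affine-in-$x_3$) displacement, use fibrewise Poincar\'e to get $\tfrac{1}{h^2}\|\bar\psi\|_{L^2}^2\le C\|\tfrac{1}{h}\partial_3\bar\psi\|_{L^2}^2$, and then invoke a scaled Korn inequality, uniform in $h$, on the complement of the elementary displacements to control $\|\nabla_h\bar\psi\|_{L^2}$. You are also right that this last uniform Korn estimate is the real content of Griso's theorem, and you have not proved it---you only state that it holds on the subspace where the appropriate fibrewise moments vanish. That is an honest description of where the difficulty lies, but it means your proposal is a proof \emph{plan}, not a proof.

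Two specific points to tighten. First, check the normalization: with $I=(-\tfrac12,\tfrac12)$ one has $\int_I x_3^2\,dx_3=\tfrac{1}{12}$, so the coefficient in front of $\int_I x_3\, e_3\wedge\psi\,dx_3$ that makes the \emph{first} $x_3$-moments of $\bar\psi_1,\bar\psi_2$ vanish is $12$, not $\tfrac{3}{2}$; with the constant as written only the zeroth moments vanish. This does not affect the Poincar\'e step (zero mean suffices), but it does affect exactly which subspace you need the uniform Korn inequality on, so be precise about it. Second, your integration-by-parts ``cancellation'' for $\partial_3\bar\psi_\alpha$ is left vague; in Griso's argument this step is handled cleanly once the moments are set up correctly, so it is worth writing out explicitly rather than gesturing at.
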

\begin{remark}
Notice that
\begin{eqnarray}
& & \label{teichmann} \| \sym \nabla_h (\hat{\psi}+r\wedge x_3 e_3)\|^2_{L^2(A \times I)} = \\
& & \nonumber
\|\sym \nabla' (\hat{\psi}_1, \hat{\psi}_2)\|^2_{L^2(A)}+\|\sym \nabla' (r_2,-r_1)\|^2_{L^2(A)}\\ \nonumber & &+\tfrac{1}{h^2}\|\partial_1 (h\hat{\psi}_3)+r_2\|^2_{L^2(A)}+\tfrac{1}{h^2}\|\partial_2 (h\hat{\psi}_3)-r_1\|^2_{L^2(A)}.
\end{eqnarray}
Thus from Korn's inequality it follows

\begin{eqnarray}\label{lukas1}
& &\|(\hat{\psi}_1,\hat{\psi}_2, h \hat{\psi}_3) \|^2_{H^1(A)}+\|(r_1,r_2)\|^2_{H^1(A)}+\tfrac{1}{h^2}\|\partial_1 (h\hat{\psi}_3)+r_2\|^2_{L^2(A)} \\ \nonumber & &+\tfrac{1}{h^2}\|\partial_2 (h\hat{\psi}_3)-r_1\|^2_{L^2(A)}\\ & & \nonumber  \leq C(A) \left(\| \sym \nabla_h (\hat{\psi}+r\wedge x_3 e_3)\|^2_{L^2(A \times I)}+\|r\|^2_{L^2(A)}+\|(\hat{\psi}_1,\hat{\psi}_2,h\hat{\psi}_3)\|^2_{L^2(A \times I)}\right)\\
\nonumber & & \leq C(A) \left(\| \sym \nabla_h (\hat{\psi}+r\wedge x_3 e_3)\|^2_{L^2(A \times I)}+\|(\psi_1,\psi_2,h\psi_3)\|^2_{L^2(A \times I)}\right).
\end{eqnarray}

\end{remark}

The following lemma is crucial for proving Proposition \ref{igor2}.
\begin{lemma} \label{igor1}
Let $A \subset \omega$ with $C^{1,1}$ boundary and $h>0$. If  $r \in H^1(A,\R^2)$  and $\hat{\psi} \in H^1(A,\R^3)$ is such that $\int_{A_i} \hat{\psi}_3 \, dx=0$, for every connected component $A_i$ of $A$ (see Lemma \ref{zzzadnje}), then  there exists $\varphi  \in  H^2(A)$ and
$w \in H^1(A)$ such that $\hat{\psi}_3=\frac{\varphi}{h}+w$ and
\begin{eqnarray}\label{jeldodano}
& &\|\varphi\|^2_{H^2(A)}+\|(\hat{\psi}_1,\hat{\psi}_2)\|^2_{H^1(A)}+\|r\|^2_{H^1(\omega)}+\|w\|^2_{H^1(A)}+\tfrac{1}{h^2}\|\partial_1 \varphi+r_2 \|^2_{L^2(A)}\\ \nonumber& &+\tfrac{1}{h^2}\|\partial_2 \varphi-r_1 \|^2_{L^2(A)} \leq \\ \nonumber & & \hspace{5ex}  C(A) \left(  \| \sym \nabla_h (\hat{\psi}+r\wedge x_3 e_3)\|^2_{L^2(A \times I)}+\|r\|^2_{L^2(A)}+\|(\hat{\psi}_1,\hat{\psi}_2,h\hat{\psi}_3) \|^2_{L^2(A \times I)}\right).
\end{eqnarray}
\end{lemma}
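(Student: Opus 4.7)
The strategy is to construct $\varphi$ as the $H^1$-potential in the $L^2$-Helmholtz decomposition of the auxiliary field $g := (-r_2, r_1)$, and to set $w := \hat\psi_3 - \varphi/h$. The $H^2$-regularity of $\varphi$ will come from $C^{1,1}$-elliptic regularity for the associated Neumann problem, and the crucial $O(h)$ closeness of $\nabla\varphi$ to $g$ will be extracted from the fact that $\nabla(h\hat\psi_3)$ is itself already close to $g$ in $L^2$, by Griso.

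\textbf{Preliminary bounds.} Denote by $E$ the right-hand side of \eqref{jeldodano}. From \eqref{teichmann} and the Korn-type bound \eqref{lukas1} (applied on $A$) we obtain that $r$ and $(\hat\psi_1,\hat\psi_2)$ are bounded in $H^1(A)$ and $h\hat\psi_3$ is bounded in $H^1(A)$, all by $CE^{1/2}$; moreover
\begin{equation*}
  \|\nabla(h\hat\psi_3) - g\|_{L^2(A)}^{2} = \|\partial_1(h\hat\psi_3) + r_2\|_{L^2(A)}^{2} + \|\partial_2(h\hat\psi_3) - r_1\|_{L^2(A)}^{2} \leq C h^{2} E,
\end{equation*}
so $g$ differs from a genuine gradient by $O(h)$ in $L^2$, while at the same time $\|g\|_{H^1(A)} \leq C E^{1/2}$.

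\textbf{Construction of $\varphi$ via Helmholtz.} Consider the $L^2$-orthogonal decomposition $g = \nabla \varphi + g^\perp$, where $\varphi \in H^1(A)$ is the Neumann potential solving
\begin{equation*}
  \Delta \varphi = \diver g \text{ in } A, \qquad \nabla\varphi \cdot \nu = g \cdot \nu \text{ on } \partial A,
\end{equation*}
normalized by $\int_{A_i} \varphi = 0$ on each connected component $A_i$, and $g^\perp$ is divergence-free with vanishing normal trace. The compatibility condition is automatic, and since $A$ is $C^{1,1}$, elliptic regularity together with the trace bound $\|g\cdot\nu\|_{H^{1/2}(\partial A)} \leq C\|g\|_{H^1(A)}$ gives $\|\varphi\|_{H^2(A)} \leq C(A)\|g\|_{H^1(A)} \leq C(A) E^{1/2}$. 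Since $\nabla(h\hat\psi_3)$ already lies in the gradient subspace, its $L^2$-projection onto divergence-free fields with vanishing normal trace is zero; because the projection is a contraction,
\begin{equation*}
  \|g - \nabla \varphi\|_{L^2(A)} = \|g^\perp\|_{L^2(A)} \leq \|g - \nabla(h\hat\psi_3)\|_{L^2(A)} \leq C h E^{1/2}.
\end{equation*}

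\textbf{Definition of $w$ and conclusion.} Set $w := \hat\psi_3 - \varphi/h$. By the hypothesis $\int_{A_i}\hat\psi_3 = 0$ and the normalization of $\varphi$, $\int_{A_i} w = 0$ on each component, so Poincar\'e yields $\|w\|_{L^2(A)} \leq C(A)\|\nabla w\|_{L^2(A)}$. The triangle inequality combined with the previous step gives
\begin{equation*}
  h\|\nabla w\|_{L^2(A)} = \|\nabla(h\hat\psi_3) - \nabla\varphi\|_{L^2(A)} \leq \|\nabla(h\hat\psi_3) - g\|_{L^2(A)} + \|g - \nabla\varphi\|_{L^2(A)} \leq ChE^{1/2},
\end{equation*}
whence $\|w\|_{H^1(A)} \leq CE^{1/2}$. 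Finally, $\tfrac{1}{h^2}(\|\partial_1\varphi + r_2\|_{L^2}^{2} + \|\partial_2\varphi - r_1\|_{L^2}^{2}) = \tfrac{1}{h^2}\|\nabla\varphi - g\|_{L^2}^{2} \leq CE$. Collecting these bounds yields \eqref{jeldodano}. The main technical point is reconciling (i) full $H^2$-control of the gradient potential $\varphi$, constants independent of $h$, with (ii) the $O(h)$ size of $\nabla\varphi - g$ in $L^2$; using the Helmholtz projection achieves both at once and works on any $C^{1,1}$ domain regardless of its topology, sidestepping any explicit stream-function representation of the divergence-free residual.
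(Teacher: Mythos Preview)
Your proof is correct and is essentially the same as the paper's: the paper constructs $\varphi$ as the minimizer of $\int_A |\nabla'\varphi + (r_2,-r_1)|^2$ over $H^1$ functions with zero mean on each component, which is exactly your Helmholtz/Neumann potential for $g=(-r_2,r_1)$; the paper then invokes $C^{1,1}$ elliptic regularity for $\|\varphi\|_{H^2}$, compares the minimizer with the competitor $h\hat\psi_3$ to get $\|\nabla\varphi-g\|_{L^2}\leq\|\nabla(h\hat\psi_3)-g\|_{L^2}$ (your contraction step), and bounds $\nabla w$ by the same triangle inequality you use. The only cosmetic difference is that you phrase the comparison as an orthogonal-projection contraction, while the paper phrases it as ``minimizer beats competitor''.
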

\begin{proof}
We do the regularization of $r$ in the similar way as in \cite[Proposition 3.1]{NeuVel-12} and \cite[Lemma 3.8]{Hornungvel12}.
We  look for the solution of the problem
\begin{equation} \label{elias1}
\min_{\varphi\in H^1(A) \atop \forall i, \ \int_{A_i} \varphi=0 }\int_A|\nabla' \varphi+(r_2,-r_1)|^2\,dx'.
\end{equation}
  The
  associated Euler-Lagrange equation of (\ref{elias1}) reads
  \begin{equation}\label{regular}
    \left\{\begin{aligned}
      -\triangle' \varphi=\,&\,\nabla'\cdot (r_2,-r_1)\qquad&&\text{in } A\\
      \partial_{\nu}\varphi=\,&\, -(r_2,-r_1)\cdot \nu \qquad&&\text{on }\partial A.
    \end{aligned}\right.
  \end{equation}
  Since $\nabla'\cdot (r_2,-r_1)\in L^2$, we obtain by
  standard  regularity estimates that  $\varphi\in H^2(A)$ and
  $\|\varphi\|_{H^2(A)}\leq C(A)  \|r\|_{H^1(A)}$, where we need  $C^{1,1}$ regularity of $\partial A$.
The claim follows from (\ref{teichmann}), (\ref{lukas1}) and the following inequalities
\begin{eqnarray}
\label{eq:prvaaa}\|\partial_1 \varphi+r_2 \|^2_{L^2(A)}  &\leq& \|\partial_1 (h\hat{\psi}_3)+r_2 \|^2_{L^2(A)}, \\
\label{eq:drugaaa}\|\partial_2 \varphi-r_1 \|^2_{L^2(A)}  &\leq& \|\partial_2 (h\hat{\psi}_3)-r_1 \|^2_{L^2(A)}, \\
\left\| \nabla' \left(\hat{\psi}_3-\tfrac{\varphi}{h}\right)\right\|^2_{L^2(A)} &\leq& \tfrac{1}{h^2}\|\partial_1 (h\hat{\psi}_3)+ r_2\|^2_{L^2}+\tfrac{1}{h^2}\|\partial_1 \varphi+ r_2\|^2_{L^2(A)} \\ & & \nonumber
+\tfrac{1}{h^2}\|\partial_2 (h\hat{\psi}_3)- r_1\|^2_{L^2(A)}+\tfrac{1}{h^2}\|\partial_2 \varphi- r_1\|^2_{L^2(A)}.
\end{eqnarray}
\end{proof}
\begin{remark} \label{periodic1}
 If we assume that $\hat{\psi}=0$, $r=0$ on $\partial A \times I$ (without the assumption that $\int_{A_i} \hat{\psi}_3 \, dx=0$, for every connected component $A_i$ of $A$)
 the claim of Lemma \ref{igor1} is valid and we can demand additionally that $\varphi \in H^2(A)$, $\varphi=0$ on $\partial A$, $w \in H_0^1(A)$. Using again the Korn's inequality with boundary conditions  we can omit
$\| r\|_{L^2}$ and $\|(\hat{\psi}_1,\hat{\psi}_2,h\hat{\psi}_3) \|_{L^2}$ on the right hand side of (\ref{jeldodano}).
To adapt the proof of Lemma \ref{igor1} instead of solving the problem (\ref{elias1}), we need to solve the problem
 \begin{equation}\label{regular2}
\min_{\varphi\in H_0^1(A)}\int_A|\nabla' \varphi+(r_2,-r_1)|^2\,dx'.
\end{equation}
\end{remark}
The following proposition gives the characterization of the symmetrized scaled gradients we will work with.

\begin{proposition}\label{igor2}
Let $A \subset \omega$ with $C^{1,1}$ boundary. Denote by $\{A_i\}_{i=1,\dots,k}$ the connected components of $A$.
\begin{enumerate}[(a)]
\item
Let $(\psi^h)_{h>0} \subset H^1(A \times I,\R^3)$ be such that
\begin{eqnarray}
\label{eq:pp1}&& (\psi_1^h,\psi_2^h,h\psi_3^h) \to 0, \text{ strongly in } L^2,\quad \forall h,i \int_{A_i} \psi_3^h=0, \\ && \limsup_{h\to 0} \|\sym \nabla_h \psi^h\|_{L^2(A)} \leq M<\infty.
 \end{eqnarray}
Then there exist $(\varphi^h)_{h>0} \subset H^2(A)$, $(\tilde{\psi}^h)_{h>0} \subset H^1(A \times I,\R^3)$
such that
$$ \sym \nabla_h \psi^h=-x_3\iota(\nabla'^2 \varphi^h)+\sym \nabla_h \tilde{\psi}^h+o^h,$$
where $o^h \in L^2(A \times I,\R^{3 \times 3})$ is such that $o^h \to 0$, strongly in $L^2$, and the following properties hold
\begin{eqnarray}
&&\label{igor10} \lim_{h \to 0} \left(\|\varphi^h\|_{H^1(A)}+\|\tilde{\psi}^h\|_{L^2(A\times I)} \right)=0,\\
&& \label{igor11} \limsup_{h \to 0} \left( \| \varphi^h \|_{H^2(A)}+\|\nabla_h \tilde{\psi}^h\|_{L^2(A\times I)} \right)\leq C(A) M.
\end{eqnarray}
\item
For every $(\varphi^h)_{h>0} \subset H^2(A)$, $(\tilde{\psi}^h)_{h>0} \subset H^1(A \times I,\R^3)$
such that
\begin{eqnarray*}
&&\lim_{h \to 0} \left(\|\varphi^h\|_{H^1(A)}+\|\tilde{\psi}^h\|_{L^2(A\times I)} \right)=0,\\
&& \limsup_{h \to 0} \left( \| \varphi^h \|_{H^2(A)}+\|\nabla_h \tilde{\psi}^h\|_{L^2(A)} \right)\leq M,
\end{eqnarray*}
there exists a sequence $(\psi^h)_{h>0} \subset H^1(A \times I,\R^3)$ such that
\begin{eqnarray*}
\sym \nabla_h \psi^h&=& -x_3 \iota(\nabla'^2 \varphi^h)+\sym \nabla_h \tilde{\psi}^h,\\
&& (\psi^h_1,\psi^h_2,h\psi^h_3) \to 0 \text{ strongly in }L^2,\\
&& \limsup_{h\to 0} \|\sym \nabla_h \psi^h\|_{L^2(A)} \leq 2 M.
\end{eqnarray*}
\end{enumerate}

\end{proposition}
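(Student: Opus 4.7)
My approach combines Griso's decomposition (Theorem \ref{grisotm}) with Lemma \ref{igor1}, supplemented by a short regularization to drive a residual to zero. For part (a), I first apply Theorem \ref{grisotm} to each $\psi^h$, writing $\psi^h = \hat{\psi}^h + r^h \wedge x_3 e_3 + \bar{\psi}^h$. Estimate (\ref{eq:111}) together with Korn (\ref{lukas1}) yields $\|\bar{\psi}^h\|_{L^2}=O(h)$, $\|\nabla_h\bar{\psi}^h\|_{L^2}\le CM$, and boundedness in $H^1(A)$ of both $(\hat{\psi}_1^h,\hat{\psi}_2^h,h\hat{\psi}_3^h)$ and $r^h$. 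The hypothesis $(\psi_1^h,\psi_2^h,h\psi_3^h)\to 0$ in $L^2$, combined with the integral formulas (\ref{igor0}), gives $\|r^h\|_{L^2(A)}\to 0$ and $\|h\hat{\psi}_3^h\|_{L^2(A)}\to 0$.

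Next I would apply Lemma \ref{igor1} to write $\hat{\psi}_3^h = \varphi^h/h + w^h$ with $\|\varphi^h\|_{H^2}$, $\|w^h\|_{H^1}$ bounded by $CM$, and the key $O(h)$ identity $\|\partial_\alpha\varphi^h \mp r^h_\beta\|_{L^2}=O(h)$. Set $\zeta^h := (0,0,\varphi^h/h)^T - x_3(\partial_1\varphi^h,\partial_2\varphi^h,0)^T$; a direct computation gives $\sym\nabla_h\zeta^h = -x_3\iota(\nabla'^2\varphi^h)$, so choosing $\tilde{\psi}^h := \psi^h - \zeta^h$ makes the proposition's identity exact (with $o^h\equiv 0$). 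The convergence $\|\varphi^h\|_{H^1}\to 0$ is extracted by combining the $O(h)$ identity with the Griso identity (\ref{teichmann}) to get $\|\nabla(\varphi^h - h\hat{\psi}_3^h)\|_{L^2}=O(h)$; since both $\varphi^h$ and $h\hat{\psi}_3^h$ have zero mean on each $A_i$, Poincar\'e yields $\|\varphi^h - h\hat{\psi}_3^h\|_{L^2}=O(h)$, whence $\|\varphi^h\|_{L^2}\to 0$ and, by Rellich from the $H^2$ bound, $\|\varphi^h\|_{H^1}\to 0$. The bound $\|\nabla_h\tilde{\psi}^h\|_{L^2}\le C(A)M$ is the nontrivial calculation: the scaled vertical entry reads $\partial_3\tilde{\psi}_\alpha^h/h = (\pm r_\beta^h + \partial_\alpha\varphi^h + \partial_3\bar{\psi}_\alpha^h)/h$, bounded thanks to the $O(h)$ identity divided by $h$ together with $\|\nabla_h\bar{\psi}^h\|_{L^2}\le CM$; the in-plane entries are controlled by $\|\varphi^h\|_{H^2}$ and the Korn-type bounds.

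The main obstacle is the convergence $\|\tilde{\psi}^h\|_{L^2}\to 0$. The in-plane components $\tilde{\psi}_\alpha^h = \hat{\psi}_\alpha^h + x_3(\pm r_\beta^h + \partial_\alpha\varphi^h) + \bar{\psi}_\alpha^h$ tend to zero in $L^2$ by the estimates above, but the vertical part is $\tilde{\psi}_3^h = w^h + \bar{\psi}_3^h$, and Lemma \ref{igor1} provides $w^h$ only as an $H^1$-bounded (not $L^2$-small) residual. The remedy is to enrich $\varphi^h$ by a small correction: replace it by $\varphi^h + h\,w_{\mathrm{reg}}^h$, where $w_{\mathrm{reg}}^h = (Ew^h)*\rho_h|_A$ is a mollification of $w^h$ at scale $h$ after extending by some $E:H^1(A)\to H^1(\R^2)$. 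This preserves $\|\varphi^h\|_{H^2}\le CM$ and the $O(h)$ identity with $r^h$ (the extra term $h\partial_\alpha w_{\mathrm{reg}}^h$ contributes at most $O(h)$), while the new residual $\hat{\psi}_3^h - \varphi^h/h = w^h - w_{\mathrm{reg}}^h$ satisfies $\|w^h-w_{\mathrm{reg}}^h\|_{L^2}\le Ch\|w^h\|_{H^1}\to 0$. The scale $h$ is forced by the trade-off: keeping $\|\varphi^h\|_{H^2}$ bounded requires $\delta\ge h$, while $L^2$ approximation requires $\delta\to 0$.

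Part (b) is the inverse construction and is routine: given $\varphi^h,\tilde{\psi}^h$ satisfying the stated properties, set $\psi^h := \tilde{\psi}^h + \zeta^h$ with $\zeta^h$ as above. Then $\sym\nabla_h\psi^h = \sym\nabla_h\tilde{\psi}^h - x_3\iota(\nabla'^2\varphi^h)$, whence $\limsup\|\sym\nabla_h\psi^h\|_{L^2}\le 2M$ by the triangle inequality, while $\psi_\alpha^h = \tilde{\psi}_\alpha^h - x_3\partial_\alpha\varphi^h \to 0$ and $h\psi_3^h = h\tilde{\psi}_3^h + \varphi^h \to 0$ in $L^2$ follow immediately from the hypotheses and $\|\varphi^h\|_{H^1}\to 0$.
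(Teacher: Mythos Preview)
Your argument is correct and follows essentially the same route as the paper: Griso's decomposition (Theorem~\ref{grisotm}) followed by Lemma~\ref{igor1}, with a mollification of the residual $w^h$ to force $\tilde\psi_3^h\to 0$ in $L^2$. The only cosmetic difference is packaging: the paper mollifies at scale $\varepsilon^h\gg h$, keeps $\varphi^h$ unchanged, absorbs $\tilde w^h$ into $\tilde\psi^h$ via the identity $\sym\nabla_h(0,0,\tilde w^h)^T=\sym\nabla_h(hx_3\partial_1\tilde w^h,hx_3\partial_2\tilde w^h,0)^T-hx_3\iota(\nabla'^2\tilde w^h)$, and sets $o^h=-hx_3\iota(\nabla'^2\tilde w^h)\to 0$; you instead mollify at scale $h$, fold $hw_{\mathrm{reg}}^h$ into $\varphi^h$, and obtain $o^h\equiv 0$. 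Both choices are valid and the verifications you sketch are correct (note that the scale is not literally ``forced'' to be $h$: any $\delta_h$ with $h\lesssim\delta_h\to 0$ works in your packaging).
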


\begin{proof}
The proof follows immediately from  Theorem \ref{grisotm} and Lemma \ref{igor1}. Namely, if we use the decomposition from Theorem \ref{grisotm} and Lemma \ref{igor1} we obtain
\begin{eqnarray} \label{igor5}
 \psi^h &=& \hat{\psi}^h+r^h\wedge x_3 e_3+\bar{\psi}^h \\ \nonumber &=&
\left(\begin{array}{c} \hat{\psi}_1^h \\ \hat{\psi}_2^h \\ 0  \end{array}\right)+\left(\begin{array}{c} 0\\ 0\\ \tfrac{\varphi^h}{h}+w^h \end{array} \right)-x_3 \left(\begin{array}{c} \partial_1 \varphi^h \\ \partial_2 \varphi^h\\ 0 \end{array} \right) +x_3 \left(\begin{array}{c} \partial_1 \varphi^h+r_2^h \\ \partial_2 \varphi^h-r_1^h\\ 0 \end{array} \right)+\bar{\psi}^h.
\end{eqnarray}

From the expressions (\ref{igor0}) and (\ref{eq:111}) it follows
$(\hat{\psi}_1,\hat{\psi}_2,h\hat{\psi}_3) \to 0$, $(r_1^h,r_2^h) \to 0$, $\bar{\psi}^h \to 0$ strongly in $L^2$. This implies $\varphi^h \to 0$ strongly in $L^2$.
Since it holds for every $i$, $\int_{A_i} \hat{\psi}_3 \, dx=0$ (see (\ref{igor0})), from (\ref{eq:111}) and Lemma \ref{igor1} we have that $\limsup_{h \to 0} \|w^h\|_{H^1(A)} \leq C(A)M$ and since $(\varphi^h)_{h>0}$  is bounded in $H^2$ we deduce by compactness that $\varphi^h\to 0$ strongly in $H^1$. Thus
we can find $(\tilde{w}^h)_{h>0} \subset H^2(A)$ such that
\begin{equation} \label{igor12}
 \lim_{h \to 0} \|w^h-\tilde{w}^h\|_{L^2(A)}=0,\ \limsup_{h \to 0} \|\tilde{w}^h\|_{H^1(A)}\leq C(A)M,\ \lim_{h \to 0} h \|\tilde{w}^h\|_{H^2(A)}=0.
\end{equation}
This can be done by mollifying $w^h$  with the mollifiers of  radius $\varepsilon^h \to 0$, $\varepsilon^h \gg h$.
From (\ref{igor5}) we have
\begin{eqnarray}
\psi^h&=&\left(\begin{array}{c} \hat{\psi}_1^h \\ \hat{\psi}_2^h \\ 0  \end{array}\right)+\left(\begin{array}{c} 0\\ 0\\ \tfrac{\varphi^h}{h}+\tilde{w}^h \end{array} \right)-x_3 \left(\begin{array}{c} \partial_1 \varphi^h \\ \partial_2 \varphi^h\\ 0 \end{array} \right) +x_3 \left(\begin{array}{c} \partial_1 \varphi^h+r_2^h \\ \partial_2 \varphi^h-r_1^h\\ 0 \end{array} \right)
\\ \nonumber & &+\left(\begin{array}{c} 0 \\ 0 \\ w^h-\tilde{w}^h \end{array} \right) +\bar{\psi}^h.
\end{eqnarray}
Now the claim follows from Lemma \ref{igor1}  and (\ref{igor12}) by defining
\begin{eqnarray*}
 \tilde{\psi}^h&=&\left(\begin{array}{c} \hat{\psi}_1^h \\ \hat{\psi}_2^h \\ 0  \end{array}\right)+x_3 \left(\begin{array}{c} \partial_1 \varphi^h+r_2^h \\ \partial_2 \varphi^h-r_1^h\\ 0 \end{array} \right)+hx_3 \left(\begin{array}{c} \partial_1 \tilde{w}^h \\ \partial_2 \tilde{w}^h\\ 0 \end{array} \right)+\left(\begin{array}{c} 0 \\ 0 \\ w^h-\tilde{w}^h \end{array} \right)+\bar{\psi}^h, \\
o^h &=& -hx_3\iota(\nabla'^2 \tilde{w}^h),
\end{eqnarray*}
after using the identity
$$ \sym \nabla_h \left(\begin{array}{c} 0 \\ 0 \\ \tilde{w}^h \end{array} \right)= \sym \nabla_h \left(\begin{array}{c} hx_3 \partial_1 \tilde{w}^h \\ hx_3 \partial_2 \tilde{w}^h \\ 0 \end{array} \right)-hx_3 \iota(\nabla'^2 \tilde{w}).$$
The second part of proposition is direct by defining
$$ \psi^h= \tilde{\psi}^h +\left(\begin{array}{c} 0\\ 0\\ \tfrac{\varphi^h}{h} \end{array} \right)-x_3 \left(\begin{array}{c} \partial_1 \varphi^h \\ \partial_2 \varphi^h\\ 0 \end{array} \right).   $$
\end{proof}

\subsection{Properties of $K(\cdot,\cdot)$}

Here we want to establish some important properties of $K(\cdot,\cdot)$ (see Lemma \ref{lem:svojstva}) which will help us to prove Proposition \ref{identi}. We emphasize the fact that all these properties are simple consequence of  Lemma \ref{lem:ocjena} and Lemma \ref{lem:glavnazamjena}.
\begin{lemma}[continuity in $M$] \label{lem:ocjena}
There exists a constant $C>0$ dependent only on $\alpha,\beta$ such that for each sequence $(h_n)_{n \in \N}$ monotonly decreasing to zero and $A \subset \omega$ open set is valid
\begin{eqnarray}\label{ocjena1111}
\left| K^{-}_{(h_n)_{n \in \N}}(M_1,A)-K^{-}_{(h_n)_{n \in \N}}(M_2,A)\right|&\leq& C \|M_1-M_2\|_{L^2}\left(\|M_1\|_{L^2}+\|M_2\|_{L^2}\right),\\\nonumber & &  \ \forall M_1,M_2 \in L^2(\Omega,\R^{2\times 2}_{\sym}),
\end{eqnarray}
and the analogous claim for $K^{+}_{(h_n)_{n \in \N}}$.
\end{lemma}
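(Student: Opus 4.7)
The plan is to transfer an almost-minimizing sequence for $K^{\pm}_{(h_n)_{n\in\N}}(M_1,A)$ into a test sequence for $K^{\pm}_{(h_n)_{n\in\N}}(M_2,A)$. This transfer is legitimate because the admissibility constraint $(\psi_1^{h_n},\psi_2^{h_n},h_n\psi_3^{h_n})\to 0$ strongly in $L^2$ does not depend on $M$. The discrepancy between the two energies is then controlled by the Lipschitz bound of Remark~\ref{ocjenkv}, combined with an a priori bound on $\|\sym\nabla_{h_n}\psi^{h_n}\|_{L^2}$ extracted from the coercivity in (Q1).

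More precisely, for $K^{-}$ I fix $\eta>0$ and choose an admissible $(\psi^{h_n})$ with $\liminf_n\int_{A\times I}Q^{h_n}(x,\iota(M_1)+\nabla_{h_n}\psi^{h_n})\,dx\le K^{-}(M_1,A)+\eta$. After passing to a subsequence $(n_k)$ on which the $\liminf$ is attained as a genuine limit, I replace $(\psi^{h_n})$ by the zero-extension $\tilde\psi^{h_n}=\psi^{h_n}$ for $n\in\{n_k\}$ and $\tilde\psi^{h_n}=0$ otherwise; the extension is still admissible, the $\liminf$ is preserved, and the energy $\int Q^{h_n}(\cdot,\iota(M_1)+\nabla_{h_n}\tilde\psi^{h_n})$ is now uniformly bounded along the full sequence. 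Combining the trivial upper bound $K^{-}(M_1,A)\le\beta\|M_1\|_{L^2(\Omega)}^{2}$ (testing with $\psi\equiv 0$) with the coercivity $\alpha|\sym G|^2\le Q^{h_n}(x,G)$ and the triangle inequality gives the uniform bound
\[
\|\sym\nabla_{h_n}\tilde\psi^{h_n}\|_{L^2(A\times I)}\le C_{\alpha,\beta}\|M_1\|_{L^2(\Omega)}+C\sqrt{\eta}\qquad\text{for $n$ large.}
\]

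Since $\tilde\psi^{h_n}$ is also admissible for $K^{-}(M_2,A)$, applying Remark~\ref{ocjenkv} pointwise with $G_i=\iota(M_i)+\nabla_{h_n}\tilde\psi^{h_n}$ and integrating via Cauchy--Schwarz controls the integral of $|Q^{h_n}_2-Q^{h_n}_1|$ by $\beta\|M_1-M_2\|_{L^2}\bigl(\|M_1+M_2\|_{L^2}+2\|\sym\nabla_{h_n}\tilde\psi^{h_n}\|_{L^2}\bigr)$, which by the previous display is at most $C_{\alpha,\beta}\|M_1-M_2\|_{L^2}(\|M_1\|_{L^2}+\|M_2\|_{L^2})+O(\sqrt{\eta})$. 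The elementary inequality $\liminf_n(a_n+b_n)\le\liminf_n a_n+\limsup_n b_n$ applied to the decomposition $\int Q^{h_n}_2=\int Q^{h_n}_1+(\int Q^{h_n}_2-\int Q^{h_n}_1)$ then yields
\[
K^{-}(M_2,A)\le K^{-}(M_1,A)+\eta+C_{\alpha,\beta}\|M_1-M_2\|_{L^2}\bigl(\|M_1\|_{L^2}+\|M_2\|_{L^2}\bigr)+O(\sqrt{\eta}).
\]
Letting $\eta\to 0$ and swapping $M_1,M_2$ produces the desired two-sided bound. For $K^{+}$ the same scheme applies directly along the full sequence without the extension trick, since an almost-minimizing sequence for the $\limsup$-infimum automatically satisfies $\int Q^{h_n}_1\le K^{+}(M_1,A)+2\eta$ for every $n$ large, so the uniform bound on $\|\sym\nabla_{h_n}\psi^{h_n}\|_{L^2}$ is immediate.

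The only mildly delicate point is the mismatch between $\liminf$ and $\lim$ for $K^{-}$: the energy of an almost-minimizing sequence need not be uniformly bounded along every index, only along a $\liminf$-attaining subsequence, which is why the zero-extension trick is used to secure a uniform bound on $\|\sym\nabla_{h_n}\tilde\psi^{h_n}\|_{L^2}$ along the full sequence. Everything else is a direct consequence of the quadratic character of $Q^{h_n}$ and the two-sided estimate in (Q1).
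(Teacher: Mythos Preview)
Your proof is correct and rests on the same two pillars as the paper's: the uniform bound on $\|\sym\nabla_{h_n}\psi^{h_n}\|_{L^2}$ from coercivity in (Q1) and comparison with $\psi\equiv 0$, followed by the Lipschitz estimate of Remark~\ref{ocjenkv}. The only genuine difference is in the bookkeeping of the limit. You work directly with the $\inf\{\liminf:\dots\}$ formulation of $K^{-}$, which forces you to pass to a $\liminf$-attaining subsequence and then zero-extend to recover a uniform energy bound along the full sequence. The paper instead exploits the equivalent characterization via the $r$-ball constrained problems (Remark~\ref{minat1}): for each fixed $r>0$ and each $n$ it takes an $h_n$-almost minimizer $\psi_\alpha^{r,h_n}$ of $\inf\{\int Q^{h_n}(\cdot,\iota(M_\alpha)+\nabla_{h_n}\psi):\|(\psi_1,\psi_2,h_n\psi_3)\|_{L^2}\le r\}$, so the comparison with zero and the resulting gradient bound hold \emph{pointwise in $(r,n)$}, and the estimate~\eqref{ocjena1111} follows by first letting $n\to\infty$ and then $r\to 0$. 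This sidesteps the subsequence and extension trick entirely and treats $K^{-}$ and $K^{+}$ symmetrically in one stroke. Your route is equally valid but slightly more laborious; the paper's $r$-constrained formulation buys a cleaner and more uniform argument.
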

\begin{proof}
For fixed $M_1,M_2 \in L^2(\Omega,\R^{2\times 2}_{\sym})$ take an arbitrary $r,h>0$ and $\psi_\alpha^{r,h_n}\in H^1(A \times I,\R^3)$ that satisfies for $\alpha=1,2$
\begin{eqnarray}\label{definf}
& &  \int_\Omega Q^{h_n}\left(x,\iota(M_\alpha)+\nabla_{h_n} \psi_{\alpha}^{r,h_n}\right)\, dx \leq \\ \nonumber & & \hspace{10ex}\inf_{\psi \in H^1(A \times I,\R^3) \atop \|(\psi_1,\psi_2,h_n\psi_3)\|_{L^2} \leq r}
\int_{A \times I} Q^{h_n}\left(x,\iota(M_\alpha)+\nabla_{h_n} \psi\right) \, dx+h_n, \\
 \nonumber& & \| (\psi_{\alpha,1}^{r,h_n},\psi_{\alpha,2}^{r,h_n},h_n\psi_{\alpha,3}^{r,h_n})\|_{L^2} \leq r.
\end{eqnarray}
We want to prove that for every $r>0$ we have
\begin{eqnarray}\label{nped}
& &\left| \int_{A \times I} Q^{h_n}\left(x,\iota(M_1)+\nabla_{h_n} \psi_{1}^{r,h_n}\right)\, dx-\int_{A \times I} Q^{h_n}\left(x,\iota(M_2)+\nabla_{h_n} \psi_{2}^{r,h_n}\right)\, dx\right| \\ & & \nonumber \hspace{+15ex}\leq C \|M_1-M_2\|_{L^2}\left(\|M_1\|_{L^2}+\|M_2\|_{L^2}\right)+h_n.
\end{eqnarray}
From that we can  easily obtain (\ref{ocjena1111}).

Let us prove (\ref{nped}). From (\ref{definf}) and (Q1), by testing with zero function, we can assume for $\alpha=1,2$
$$
\alpha \|M_\alpha+ \sym \nabla_{h_n} \psi^{r,h_n}_{\alpha}\|^2_{L^2}  \leq \int_{A \times I} Q^{h_n}\left(x,\iota(M_\alpha)+\nabla_{h_n} \psi_{\alpha}^{r,h_n}\right)\, dx\leq \beta \|M_{\alpha}\|^2_{L^2}.
$$
From this we have for $\alpha=1,2$
\begin{equation} \label{identitet} \| \sym \nabla_{h_n} \psi^{r,h_n}_{\alpha}\|^2_{L^2} \leq C(\alpha,\beta) \|M_{\alpha}\|^2_{L^2}.
\end{equation}

Without any loss of generality we can also assume that
\begin{equation}\label{pretpostavka1}
\int_{A \times I} Q^{h_n} \left(x,\iota(M_1)+\nabla_{h_n} \psi_{1}^{r,h_n}\right)\, dx \geq \int_{A \times I} Q^{h_n}\left(x,\iota(M_2)+\nabla_{h_n} \psi_{2}^{r,h}\right)\, dx.
\end{equation}
We have
\begin{eqnarray*}
& &\left| \int_{A \times I} Q^{h_n}\left(x,\iota(M_1)+\nabla_{h_n} \psi_{1}^{r,h_n}\right)\, dx-\int_{A \times I} Q^{h_n}\left(x,\iota(M_2)+\nabla_{h_n} \psi_{2}^{r,h_n}\right)\, dx\right| \\ & & = \int_{A \times I} Q^{h_n}\left(x,\iota(M_1)+\nabla_{h_n} \psi_{1}^{r,h_n}\right)\, dx-\int_{A \times I} Q^{h_n}\left(x,\iota(M_2)+\nabla_{h_n} \psi_{2}^{r,h_n}\right)\, dx \\ & & =
 \int_{A \times I} Q^{h_n}\left(x,\iota(M_1)+\nabla_{h_n} \psi_{1}^{r,{h_n}}\right)\, dx-\int_{A \times I} Q^{h_n}\left(x,\iota(M_1)+\nabla_{h_n} \psi_{2}^{r,h_n}\right)\, dx \\ & & +\int_{A \times I} Q^{h_n}\left(x,\iota(M_1)+\nabla_{h_n} \psi_{2}^{r,h_n}\right)\, dx-
\int_{A \times I} Q^h\left(x,\iota(M_2)+\nabla_{h_n} \psi_{2}^{r,h_n}\right)\, dx \\
& & \leq h_n +C(\alpha,\beta) \|M_1-M_2\|_{L^2}\left(\|M_1\|_{L^2}+\|M_2\|_{L^2}\right).
\end{eqnarray*}
\end{proof}
The following lemma will be working lemma for establishing the properties of $K(\cdot,\cdot)$.
\begin{lemma}\label{lem:glavnazamjena}
Let for $(h_n)_{n \in \N}$ monotonly decreasing to zero Assumption \ref{ass:main} is satisfied.
Suppose that for $M \in  L^2(\Omega,\R^{2 \times 2}_{\sym})$ and $D \subset \omega$ with $C^{1,1}$ boundary  we have
$$K(M,D)=\lim_{n\to \infty} \int_{D\times I} Q^{h_n}(x,\iota(M)+\nabla_{h_n} \psi^{h_n})\, dx, $$
for some $(\psi^{h_n})_{n\in \N}$ such that $(\psi_1^{h_n},\psi_2^{h_n},h_n \psi_3^{h_n}) \to 0$ strongly in $L^2$. Then there exists a subsequence $(h_{n(k)})_{k\in \N}$ and $(\vartheta_k)_{k \in \N}\subset H^1(D \times I,\R^3)$ such that
\begin{enumerate}[(a)]
\item
$(\vartheta_{k,1},\vartheta_{k,2},h_{n(k)} \vartheta_{k,3}) \to 0$ strongly in $L^2$,
\item
 $(|\sym \nabla_{h_{n(k)}} \vartheta_k|^2 )_{k \in \N}$ is equi-integrable,
 $$ \sym \nabla_{h_{n(k)}} \vartheta_k=-x_3\iota( \nabla'^2 \varphi_{k})+\sym \nabla_{h_{n(k)}} \tilde{\psi}_{k},$$
 where $\left(|\nabla'^2 \varphi_k|^2\right)_{k\in\N}$ and $\left(| \nabla_{h_{n(k)}}\tilde{\psi}_k |^2\right)_{k \in \N}$ are equi-integrable and
 $\varphi_k \to 0$ strongly in $H^1$ and $\tilde{\psi}_k \to 0$ strongly in $L^2$.
 Also the following is valid
 $$\limsup_{k \to \infty} \left( \| \varphi_k \|_{H^2(D)}+\|\nabla_{h_{n(k)}}{\tilde{\psi}}_k\|_{L^2(D)}\right)\leq C(D) \left( \beta \|M\|^2_{L^2}+1 \right).$$
\item $$K(M,D)=\lim_{k\to \infty} \int_{D \times I} Q^{h_{n(k)}}(x,\iota(M)+\nabla_{h_{n(k)}} \vartheta_k)\, dx.$$
Moreover, one can additionally assume that for each $k \in \N$ we have $\vartheta_k=0$ on $\partial D \times I$, i.e., $\varphi_k=\nabla' \varphi_k=0$, on $\partial D$ and $\tilde{\psi}_k=0$ on $\partial D \times I$.
\end{enumerate}
\end{lemma}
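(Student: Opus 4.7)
The plan is to perform a Griso--type decomposition of $\psi^{h_n}$ using Proposition \ref{igor2}(a), replace its two components by equi-integrable sequences, cut off to enforce zero boundary data, and finally reassemble via Proposition \ref{igor2}(b). Since $\int_{D \times I} Q^{h_n}(x,\iota(M) + \nabla_{h_n} \psi^{h_n})\,dx$ converges to $K(M,D)$, the coercivity in (Q1) gives a bound on $\|\sym \nabla_{h_n} \psi^{h_n}\|_{L^2}$ in terms of $\|M\|_{L^2}$. After subtracting, on each connected component of $D$, the mean of $\psi^{h_n}_3$ (which alters neither $\sym \nabla_{h_n} \psi^{h_n}$ nor the convergence $h_n \psi^{h_n}_3 \to 0$ in $L^2$), Proposition \ref{igor2}(a) yields $\varphi^{h_n} \in H^2(D)$ and $\tilde{\psi}^{h_n} \in H^1(D\times I,\R^3)$, with $\varphi^{h_n}\to 0$ strongly in $H^1(D)$ and $\tilde{\psi}^{h_n}\to 0$ strongly in $L^2$, such that
$$\sym \nabla_{h_n} \psi^{h_n} = -x_3 \iota(\nabla'^2 \varphi^{h_n}) + \sym \nabla_{h_n} \tilde{\psi}^{h_n} + o^{h_n},$$
with $o^{h_n}\to 0$ in $L^2$ and $\|\varphi^{h_n}\|_{H^2} + \|\nabla_{h_n}\tilde{\psi}^{h_n}\|_{L^2} \le C(D)(\beta\|M\|_{L^2}^2 + 1)^{1/2}$.

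Along a subsequence $(h_{n(k)})_{k\in\N}$ I would then apply the scaled--gradient version of the Fonseca--M\"uller--Pedregal decomposition lemma (developed in the Appendix) to both $\nabla'^2 \varphi^{h_{n(k)}}$ and $\nabla_{h_{n(k)}} \tilde{\psi}^{h_{n(k)}}$. This produces modified $\bar{\varphi}_k \in H^2(D)$ and $\bar{\tilde\psi}_k \in H^1(D\times I,\R^3)$ with $(|\nabla'^2 \bar{\varphi}_k|^2)_k$ and $(|\nabla_{h_{n(k)}} \bar{\tilde\psi}_k|^2)_k$ equi-integrable, with $\bar{\varphi}_k = \varphi^{h_{n(k)}}$ and $\bar{\tilde\psi}_k = \tilde{\psi}^{h_{n(k)}}$ outside sets $E_k$ with $|E_k|\to 0$, with the $H^2$ and scaled $H^1$ bounds retained up to a factor, and with $\bar\varphi_k \to 0$ in $H^1$ and $\bar{\tilde\psi}_k \to 0$ in $L^2$. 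Applying the quadratic bound in Remark \ref{ocjenkv} together with Cauchy--Schwarz and the equi-integrability on $E_k$ shows that the energy of the replaced decomposition still has $K(M,D)$ as its limit.

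For the boundary conditions I would carry out a De~Giorgi slicing adapted to scaled gradients: fix $\delta_k \to 0$ slowly, subdivide the collar $C_{\delta_k} := \{x' \in D : \dist(x', \partial D) < \delta_k\}$ into $N_k$ annular shells of width $\delta_k / N_k$, and exploit equi-integrability to find a shell on which both $|\nabla'^2 \bar\varphi_k|^2$ and $|\nabla_{h_{n(k)}} \bar{\tilde\psi}_k|^2$ are uniformly small. A smooth cutoff $\chi_k$ whose gradient is supported in the chosen shell produces commutator errors $\|\nabla'^2 \chi_k\|_\infty \|\bar\varphi_k\|_{L^2(\text{shell})}$, $\|\nabla' \chi_k\|_\infty \|\nabla' \bar\varphi_k\|_{L^2(\text{shell})}$ and an analogous first-order scaled--gradient term for $\bar{\tilde\psi}_k$; the strong $H^1$--convergence of $\bar\varphi_k$, the strong $L^2$--convergence of $\bar{\tilde\psi}_k$, and the equi-integrability of the densities leave enough room to choose $\delta_k, N_k$ so that all these errors vanish. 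The resulting $\varphi_k := \chi_k \bar\varphi_k$ and $\tilde\psi_k := \chi_k \bar{\tilde\psi}_k$ have zero traces on $\partial D$, retain equi-integrability, and the associated energy still tends to $K(M,D)$. Feeding $(\varphi_k, \tilde\psi_k)$ into Proposition \ref{igor2}(b) produces the claimed $\vartheta_k$, and the explicit linear formula for $\vartheta_k$ in part (b) inherits the zero boundary data. I expect the slicing step to be the main technical obstacle: reconciling the second-order cutoff for $\varphi_k$, the first-order scaled-gradient cutoff for $\tilde\psi_k$, and the quadratic energy estimate of Remark \ref{ocjenkv} while respecting the a~priori bounds required in (b) demands a careful diagonal choice of $\delta_k$, $N_k$ and of the extracted subsequence.
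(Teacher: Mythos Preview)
Your proposal is essentially correct and follows the paper's proof closely through the main steps: normalizing the mean of $\psi_3^{h_n}$, bounding $\|\sym\nabla_{h_n}\psi^{h_n}\|_{L^2}$ via (Q1) and comparison with the zero corrector, applying Proposition~\ref{igor2}(a), and then invoking the equi-integrability replacement lemmas (Proposition~\ref{ekvi1} and Theorem~\ref{ekvi2}) on a subsequence. Your argument for energy preservation after replacement is also the same in spirit; the paper phrases it as a sandwich $K(M,D)\geq \lim\int Q^{h_{n(k)}}(\ldots\vartheta_k\ldots)\geq K(M,D)$, using that the replaced and original sequences agree off a set of vanishing measure and that the equi-integrable part contributes nothing there.

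The one genuine difference is the boundary step. You plan a De~Giorgi slicing in a collar, and you correctly flag it as the main technical obstacle because you must simultaneously control second-order commutators for $\varphi_k$ and first-order scaled-gradient commutators for $\tilde\psi_k$. The paper avoids this entirely: once equi-integrability of $(|\nabla'^2\varphi_k|^2)$ and $(|\nabla_{h_{n(k)}}\tilde\psi_k|^2)$ is in hand, together with $\varphi_k\to 0$ in $H^1$ and $\tilde\psi_k\to 0$ in $L^2$, a single smooth cutoff $\eta_j$ associated to an exhaustion $A_j\ll A_{j+1}\ll D$ already gives $\|\eta_j\varphi_k-\varphi_k\|_{H^2}+\|\nabla_{h_{n(k)}}(\eta_j\tilde\psi_k)-\nabla_{h_{n(k)}}\tilde\psi_k\|_{L^2}\leq C\theta(|D\setminus A_j|)+C\|\eta_j\|_{C^2}(\|\varphi_k\|_{H^1}+\|\tilde\psi_k\|_{L^2})$, where $\theta$ is the common modulus of equi-integrability. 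A diagonal choice $j=j(k)\to\infty$ kills both terms; no shell selection is needed. This is packaged as Lemma~\ref{nulizacija} in the paper and makes the boundary step a routine diagonalization rather than a slicing argument.
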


\begin{proof}
We can without loss of generality assume that $\int_{D_i} \psi_3^{h_n} \, dx=0$, $\forall n \in \N$ and every connected component $D_i$ of $D$, $i=1,\dots,m$. By comparing with zero sequence one can additionally assume that
$$ \|\sym \nabla_{h_n} \psi^{h_n}\|_{L^2(D)} \leq \beta \|M\|^2_{L^2}+1, \quad \forall n \in \N.    $$
From Proposition \ref{igor2} we have that there exist $(\tilde{\varphi}_n)_{n \in \N} \subset H^2(D)$ and
$(\tilde{\tilde{\psi}}_n)_{n \in \N} \subset H^1(D\times I)$ such that
 \begin{equation} \label{jednakost1} \sym \nabla_{h_n} \psi^{h_n}=-x_3\iota(\nabla'^2 \tilde{\varphi}^{h_n})+\sym \nabla_{h_n} \tilde{\tilde{\psi}}^{h_n}+o^{h_n},
\end{equation}
where $o^{h_n} \in L^2(D \times I,\R^{3 \times 3})$ and the following properties hold
\begin{eqnarray}
&& \label{igor10aa} \lim_{n \to\infty} \left(  \| \tilde{\varphi}^{h_n} \|_{H^1(D)}+\| \tilde{\tilde{\psi}}^{h_n} \|_{L^2(D\times I)}+\|o^{h_n}\|_{L^2(D)} \right)=0,\\
&& \label{igor11bb} \limsup_{n \to \infty} \left( \| \tilde{\varphi}^{h_n} \|_{H^2(D)}+\|\nabla_{h_n} \tilde{\tilde{\psi}}^{h_n}\|_{L^2(D\times I)} \right)\leq C(D) \left( \beta \|M\|^2_{L^2}+1 \right).
\end{eqnarray}
Now we use Proposition \ref{ekvi1} and Theorem \ref{ekvi2} to obtain the sequence $(\varphi_k)_{k \in \N} \subset H^2(D)$ and $(\tilde{\psi}_k)_{k \in \N}$ such that
$\left(|\nabla'^2 \varphi_k|^2\right)_{k\in\N}$ and $\left(| \nabla_{h_{n(k)}} \tilde{\psi_k} |^2\right)_{k \in \N}$ are equi-integrable and
for $A_k$ defined by
\begin{eqnarray*}
A_k&:=&\{\varphi_k \neq \tilde{\varphi}^{h_{n(k)}}  \text{ or } \tilde{\tilde{\psi}}^{h_{n(k)}} \neq \tilde{\psi}_k \},
\end{eqnarray*}
is valid $|A_k| \to 0$ as $k \to \infty$ and
$$ \lim_{k \to\infty} \left(  \| \varphi_k \|_{H^1(D)}+\|\tilde{\psi}_k \|_{L^2(D\times I)} \right)=0. $$
Define $\vartheta_k$ as in Proposition \ref{igor2} by
$$\vartheta_k:= \tilde{\psi}_{k} +\left(\begin{array}{c} 0\\ 0\\ \tfrac{\varphi_k}{h_{n(k)}} \end{array} \right)-x_3 \left(\begin{array}{c} \partial_1 \varphi_k \\ \partial_2 \varphi_k \\ 0 \end{array} \right).$$
From the property (\ref{jednakost1}), Remark \ref{rem:jednakost}, equi-integrability and the fact that $|A_k| \to 0$ as $k \to \infty$ we have the following
\begin{eqnarray*}
K(M,D) &=& \lim_{n\to \infty} \int_{D \times I} Q^{h_n}(x,\iota(M)+\nabla_{h_n} \psi^{h_n})\, dx \\
&=& \lim_{n\to \infty} \int_{D \times I} Q^{h_n}(x,\iota(M)-x_3\iota( \nabla'^2 \tilde{\varphi}^{h_n})+\sym \nabla_{h_n} \tilde{\tilde{\psi}}^{h_n})\, dx \\
&\geq & \lim_{k\to \infty} \int_{A_k \times I} Q^{h_{n(k)}}\left(x,\iota(M)-x_3 \iota( \nabla'^2 \varphi_k)+\sym \nabla_{h_{n(k)}} \tilde{\psi}_k\right)\, dx \\
&=& \lim_{k\to \infty} \int_{D \times I} Q^{h_{n(k)}}(x,\iota(M) +\nabla_{h_{n(k)}} \vartheta_k )\, dx \geq K(M,D).
\end{eqnarray*}
The last inequality follows from the definition of $K(M,D)$ and the fact that $(\vartheta_{k,1},\vartheta_{k,2},$ $h_{n(k)} \vartheta_{k,3})$ $\to 0$ strongly in $L^2$.
 The last claim in (b) follows from the equi-integrability property and (\ref{igor11bb}).
The last claim in (c) follows from simple truncation Lemma \ref{nulizacija} below.
\end{proof}

\begin{lemma} \label{nulizacija}
Let $A\subset \omega$ be an open, bounded set.
Let $(\vartheta_n)_{n \in \N} \subset H^1(A \times I,\R^3)$ be defined by
$$\vartheta_n:= \psi_{n}+\left(\begin{array}{c} 0\\ 0\\ \tfrac{\varphi_n}{h_n} \end{array} \right)-x_3 \left(\begin{array}{c} \partial_1 \varphi_n \\ \partial_2 \varphi_n \\ 0 \end{array} \right),$$
where $(\psi_{n})_{n \in \N} \subset H^1(A \times I,\R^3)$ and $(\varphi_n)_{n \in \N} \subset H^2(A)$. Suppose that
$\left(|\nabla'^2 \varphi_n|^2\right)_{n\in\N}$ and $\left(| \nabla_{h_n} \psi_n |^2\right)_{n \in \N}$ are equi-integrable and
\begin{equation}
\lim_{n \to\infty} \left(  \| \varphi_n \|_{H^1(A)}+\| \psi_n \|_{L^2(A \times I)} \right)=0.
\end{equation}
Then there exist sequences $(\tilde{\varphi}_n)_{n \in \N} \subset H^2(A)$, $(\tilde{\psi}_n)_{n\in \N}\subset H^1(A \times I,\R^3)$ and a sequence of sets $(A_n)_{n\in \N}$  such that for each $n \in\N$, $A_n \ll A_{n+1} \ll A$ and $\cup_{n \in \N} A_n=A$ and
\begin{enumerate}[(a)]
\item $\tilde{\varphi}_n=0$, $\nabla' \tilde{\varphi}_n=0$  in a neighborhood of $\partial A$,  $\tilde{\psi}_n=0$ in a neighborhood of $\partial A \times I$.
\item $\tilde{\psi}_n=\psi_n \text{ on } A_n \times I,\ \tilde{\varphi}_n=\varphi_n \text{ on } A_n$,
\item $ \|\tilde{\varphi}_n-\varphi_n\|_{H^2}\to0,\  \|\tilde{\psi}_n-\psi_n\|_{H^1}\to 0$, $\|\nabla_{h_n} \tilde{\psi}_n-\nabla_{h_n} \psi_n\|_{L^2} \to 0$, as $n \to \infty$.
\item for $\tilde{\vartheta}_n$ defined by
$$ \tilde{\vartheta}_n:= \tilde{\psi}_{n}+\left(\begin{array}{c} 0\\ 0\\ \tfrac{\tilde{\varphi}_n}{h_n} \end{array} \right)-x_3 \left(\begin{array}{c} \partial_1 \tilde{\varphi}_n \\ \partial_2 \tilde{\varphi}_n \\ 0 \end{array} \right),$$
we have
$$ \lim_{n \to \infty}\left\| \sym \nabla_{h_n}\vartheta_n-\sym \nabla_{h_n}\tilde{\vartheta}_n \right\|_{L^2}=0.$$
\end{enumerate}
\end{lemma}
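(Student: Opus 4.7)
The plan is to implement a standard cut-off-and-truncate construction, using multiplicative cutoff functions that depend only on the in-plane variable $x'$ so as to preserve the specific structure of $\vartheta_n$. Concretely, I would pick a sequence $\delta_n\downarrow 0$ (to be calibrated at the end) and define $A_n:=\{x'\in A\,:\,\dist(x',\partial A)>\delta_n\}$, together with cutoffs $\eta_n\in C_c^\infty(A)$ satisfying $\eta_n\equiv 1$ on $A_n$, $0\le\eta_n\le 1$, $\supp\eta_n$ at positive distance from $\partial A$, and the standard bounds $\|\nabla'\eta_n\|_\infty\le C\delta_n^{-1}$, $\|\nabla'^2\eta_n\|_\infty\le C\delta_n^{-2}$. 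I would then set $\tilde\varphi_n:=\eta_n\varphi_n$ and $\tilde\psi_n(x):=\eta_n(x')\psi_n(x)$. With this, the inclusions $A_n\ll A_{n+1}\ll A$ and $\bigcup_n A_n=A$ are built in; property~(a) follows because $\eta_n$ (hence $\nabla'\eta_n$) vanishes identically near $\partial A$, so $\tilde\varphi_n$, $\nabla'\tilde\varphi_n$, and $\tilde\psi_n$ do too; and property~(b) is immediate from $\eta_n\equiv 1$ on $A_n$.

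For (c) I would expand, e.g.,
\begin{equation*}
\nabla'^2\bigl((1-\eta_n)\varphi_n\bigr)=(1-\eta_n)\nabla'^2\varphi_n-2\nabla'\eta_n\otimes\nabla'\varphi_n-\varphi_n\,\nabla'^2\eta_n,
\end{equation*}
and estimate each piece separately. The term with $(1-\eta_n)$ is supported in $A\setminus A_n$, whose measure tends to zero, so equi-integrability of $(|\nabla'^2\varphi_n|^2)$ gives vanishing in $L^2$; the remaining two pieces are controlled by $C\delta_n^{-1}\|\nabla'\varphi_n\|_{L^2}$ and $C\delta_n^{-2}\|\varphi_n\|_{L^2}$, both arbitrarily small since $\|\varphi_n\|_{H^1}\to 0$ provided $\delta_n$ goes to zero slowly enough (for instance $\delta_n\ge(\|\varphi_n\|_{L^2}+\|\psi_n\|_{L^2})^{1/3}$ suffices). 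The analogous argument handles $\|\tilde\psi_n-\psi_n\|_{H^1}$ and $\|\nabla_{h_n}\tilde\psi_n-\nabla_{h_n}\psi_n\|_{L^2}$: since $\eta_n$ is independent of $x_3$, one has $\nabla_{h_n}(\eta_n\psi_n)=\eta_n\nabla_{h_n}\psi_n+\psi_n\otimes(\nabla'\eta_n,0)$, so the difference splits into $(\eta_n-1)\nabla_{h_n}\psi_n$ (which vanishes in $L^2$ by equi-integrability of $(|\nabla_{h_n}\psi_n|^2)$ on $(A\setminus A_n)\times I$) and $\psi_n\otimes(\nabla'\eta_n,0)$ (bounded by $C\delta_n^{-1}\|\psi_n\|_{L^2}$). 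For the ordinary gradient one uses $|\nabla\psi_n|^2\le|\nabla_{h_n}\psi_n|^2$, which transfers equi-integrability to $(|\nabla\psi_n|^2)$. A diagonal choice of $\delta_n$ (the slower of $1/n$ and a power of $\|\varphi_n\|_{L^2}+\|\psi_n\|_{L^2}$) makes all error terms vanish simultaneously.

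Property~(d) then reduces to a direct computation: for $u=\psi+(0,0,\varphi/h)^T-x_3(\partial_1\varphi,\partial_2\varphi,0)^T$ with $\varphi=\varphi(x')$, the contributions $\pm\partial_j\varphi/h$ appearing in the third column of the first two rows and in the first two columns of the third row cancel exactly after symmetrization, yielding
\begin{equation*}
\sym\nabla_h u=\sym\nabla_h\psi-x_3\,\iota(\nabla'^2\varphi).
\end{equation*}
Applying this identity to both $\vartheta_n$ and $\tilde\vartheta_n$ gives
\begin{equation*}
\sym\nabla_{h_n}\vartheta_n-\sym\nabla_{h_n}\tilde\vartheta_n=\sym\bigl(\nabla_{h_n}\psi_n-\nabla_{h_n}\tilde\psi_n\bigr)-x_3\,\iota\bigl(\nabla'^2(\varphi_n-\tilde\varphi_n)\bigr),
\end{equation*}
whose $L^2$ norm vanishes by~(c).

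The main obstacle is not conceptual but the simultaneous calibration in step two: the cutoff produces a hierarchy of error terms of sizes $\delta_n^{-1}\|\varphi_n\|_{H^1}$, $\delta_n^{-2}\|\varphi_n\|_{L^2}$, $\delta_n^{-1}\|\psi_n\|_{L^2}$, plus the equi-integrability residue on $A\setminus A_n$, and one must verify that these can all be driven to zero by a single choice $\delta_n\downarrow 0$. Once this is set up, everything else is bookkeeping.
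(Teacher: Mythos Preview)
Your approach is essentially identical to the paper's: multiply by a smooth cutoff $\eta$ depending only on $x'$, split the error into a piece supported in a thin boundary layer (handled by equi-integrability of $|\nabla'^2\varphi_n|^2$ and $|\nabla_{h_n}\psi_n|^2$) and pieces involving derivatives of $\eta$ hitting $\varphi_n$ or $\psi_n$ (handled by $\|\varphi_n\|_{H^1}+\|\psi_n\|_{L^2}\to 0$), then diagonalize. The paper sets this up as a two-parameter family $\tilde\varphi_{k,n}=\eta_k\varphi_n$, $\tilde\psi_{k,n}=\eta_k\psi_n$ with fixed cutoffs $\eta_k$ and then extracts $k(n)$ by an abstract diagonal argument, whereas you calibrate $\delta_n$ directly; the content is the same.

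One small slip in your calibration: the suggested choice $\delta_n\ge(\|\varphi_n\|_{L^2}+\|\psi_n\|_{L^2})^{1/3}$ does not control the cross term $\delta_n^{-1}\|\nabla'\varphi_n\|_{L^2}$, since $\|\nabla'\varphi_n\|_{L^2}$ can decay much more slowly than $\|\varphi_n\|_{L^2}$. Replace $\|\varphi_n\|_{L^2}$ by $\|\varphi_n\|_{H^1}$ in the calibration (e.g.\ $\delta_n=\max\{n^{-1},(\|\varphi_n\|_{H^1}+\|\psi_n\|_{L^2})^{1/3}\}$, then perturb slightly to enforce strict monotonicity so that $A_n\ll A_{n+1}$) and everything goes through.
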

\begin{proof}
By $\theta:[0,+\infty) \to [0,+\infty)$ denote the function:
$$ \theta(\varepsilon)=\sup_{n \in \N,\ S \subset A \atop \meas(S) \leq \eps}\left(\|\nabla'^2 \varphi_n\|^2_{L^2(S)}+\| \nabla_{h_n} \psi_n \|^2_{L^2(S\times I)} \right). $$
By the equi-integrability property we have $\eta(\eps) \to 0$ as $\eps \to 0$.
For fixed $k \in \N$ choose $A_{k} \ll A$ open set with Lipschitz boundary such that $\meas(A\backslash \bar{A}_{k}) \leq \tfrac{1}{k}$ and smooth cut-off function $\eta_k \in C_0^\infty(A)$ such that $0 \leq \eta_k \leq 1$ and
$\eta_k=1$ in a neighborhood of $\bar{A}_{k}$. We can also assume  that for every $k \in \N$, $A_k \ll A_{k+1} \ll A$ and that $\cup_{k \in \N} A_k=A$.
Define $\tilde{\varphi}_{k,n}:= \eta_{k} \varphi_n$, $\tilde{\psi}_{k,n}:=\eta_{k} \psi_n$.
Define $g:\N \times [0,+\infty) \to [0,+\infty)$ by
$$g(k,n)= \|\tilde{\varphi}_{k,n}-\varphi_n\|_{H^2}+ \|\tilde{\psi}_{k,n}-\psi_n\|_{H^1}+\|\nabla_{h_n} \tilde{\psi}_{k,n}-\nabla_{h_n} \psi_n\|_{L^2}. $$
Since we have for $\alpha,\beta=1,2$:
\begin{eqnarray*}
 \partial_{\alpha \beta} \tilde{\varphi}_{k,n}&=&\partial_{\alpha \beta} \eta_{k} \varphi_n+\partial_{\alpha} \eta_{k} \partial_{\beta} \varphi_n+\eta_{k} \partial_{\alpha \beta} \varphi_n, \\
  \partial_{\alpha} \tilde{\psi}_{k,n}&=& \eta_{k} \partial_{\alpha} \psi_n+\partial_{\alpha} \eta_{k} \psi_n,\\
  \partial_3 \tilde{\psi}_{k,n}&=&\eta_{k} \partial_3 \psi_n,
\end{eqnarray*}
it is easy to conclude that there exists $C>0$ such that for every $k,n\in \N$ we have
$$g(k, n) \leq C\left(\theta(\tfrac{1}{k})+\|\eta_\eps\|_{C^2}\cdot(\|\varphi_n\|_{H^1}+\|\psi_n\|_{L^2})\right).$$
Since we also have, by the compactness, that $\varphi_n\to 0$, strongly in $H^1$ we conclude by the diagonalization argument that there exists a sequence $k(n)$ monotonly increasing  such that $g(k(n),n) \to 0$ as $n\to \infty$. This proves (c). (d) follows directly from (c).
\end{proof}

The following lemma is an easy consequence of Lemma \ref{lem:ocjena} and Lemma \ref{lem:glavnazamjena}.
It is crucial for proving Proposition \ref{identi}.
\begin{lemma}\label{lem:svojstva}
Let $(h_n)_{n \in \N}$ be such that  Assumption \ref{ass:main} is satisfied.
The following properties are valid for every $A,A_1,A_2\subset \omega$ open sets and  $M,M_1,M_2 \in L^2(\Omega,\R^{2 \times 2}_{\sym})$:
\begin{enumerate}[(a)]
\item
 there exists $K(M,A)$ such that for every $(h_n)_{n\in \N}$ monotonly decreasing to zero we have
$$
K^{+}_{(h_n)_{n \in \N}}(M,A)=K^{-}_{(h_n)_{n \in \N}}(M,A)=K(M,A),
$$
\item (localization) if $A_1 \subset A_2$  we have
$$K(M1_{A_1 \times I},A_2)=K(M,A_1), $$
\item (inner regularity)
$$K(M,A)=\sup_{D \in \mathcal{D} \atop D\ll A} K(M,D),$$
\item (boundedness)
$$K(M,A) \leq \beta \|M\|^2_{L^2(A \times I)},$$
\item (monotonicity) if $A_1 \subset A_2 $  we have
$$ K(M,A_1) \leq K(M,A_2),$$
\item (continuity) Let $(A_n)_{n\in \N}$ be the family of open subsets of $\omega$ such that for each $n\in \N$, $A_n \subset A_{n+1}$. Let $A=\cup_{n=1}^\infty A_n$. Then
    $\lim_{n\to \infty} K(M,A_n)=K(M,A)$,
\item (additivity) if $A_1 \cap A_2=\emptyset$ then we have
$$K(M,A_1 \cup A_2)=K(M,A_1)+K(M,A_2),$$
\item \label{neprekidnost1}
\begin{eqnarray*}
\left| K(M_1,A)-K(M_2,A)\right|&\leq& C \|M_1-M_2\|_{L^2}\left(\|M_1\|_{L^2}+\|M_2\|_{L^2}\right),
\end{eqnarray*}
where $C$ depends on $\alpha,\beta$,
\item (homogeneity)
    $$ K(tM,A) =t^2 K(M,A), \forall t \in \R. $$
\item
    $$ K(M_1+M_2,A)+K(M_1-M_2,A) \leq 2K(M_1,A)+2K(M_2,A), $$
\item (coerciveness) $$   K(M,A) \geq \alpha \|M\|^2_{L^2(A \times I)}. $$
\item (subadditivity) if   $A \subset A_1 \cup A_2$ we have
$$K(M,A) \leq K(M,A_1)+K(M,A_2).$$

\end{enumerate}

\end{lemma}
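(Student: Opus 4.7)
The strategy is to derive all twelve items from three building blocks: the Lipschitz estimate of Lemma \ref{lem:ocjena}, the replacement of near-optimal correctors by equi-integrable fields with zero trace on $\partial D\times I$ given by Lemma \ref{lem:glavnazamjena}, and the pointwise quadratic structure (Q1) of $Q^{h}(x,\cdot)$. Once the hard geometric item, subadditivity, is in place, the remaining properties follow by classical measure-theoretic reasoning adapted to the two-field decomposition from Proposition \ref{igor2}.

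First I would dispose of the pointwise/algebraic items. Monotonicity (e) and localisation (b) are immediate from restricting admissible correctors from the larger to the smaller domain. Property (d) follows from testing with $\psi\equiv 0$ and the upper bound in (Q1); (i) from the rescaling $\psi\mapsto t\psi$ combined with the quadratic homogeneity of $Q^{h}(x,\cdot)$; and (h) from Lemma \ref{lem:ocjena} once one knows $K^{+}=K^{-}$. For coercivity (k) one restricts to competitors $\psi^{h}$ with $\|\sym\nabla_{h}\psi^{h}\|_{L^{2}}$ bounded (the energy is otherwise infinite by (Q1)); by Proposition \ref{igor2} such $\sym\nabla_{h}\psi^{h}$ converges weakly to zero in $L^{2}$, so $\liminf\int|\iota(M)+\sym\nabla_{h}\psi^{h}|^{2}\geq\|M\|_{L^{2}(A\times I)}^{2}$. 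The parallelogram inequality (j) is obtained by selecting equi-integrable zero-trace correctors $\vartheta^{(j)}_{k}$ for $M_{j}$ through Lemma \ref{lem:glavnazamjena} and testing $M_{1}\pm M_{2}$ with $\vartheta^{(1)}_{k}\pm\vartheta^{(2)}_{k}$, exploiting $Q^{h}(x,a+b)+Q^{h}(x,a-b)=2Q^{h}(x,a)+2Q^{h}(x,b)$.

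The heart of the proof is subadditivity (l). Given $A\subset A_{1}\cup A_{2}$, fix $A'\ll A$ and then $D_{j}\in\mathcal D$ with $D_{j}\ll A_{j}$ so that $A'\subset D_{1}\cup D_{2}$. Apply Lemma \ref{lem:glavnazamjena} on each $D_{j}$ to obtain equi-integrable correctors $\vartheta^{(j)}_{k}$ vanishing on $\partial D_{j}\times I$ that realise $K(M,D_{j})$ in the limit; extended by zero, $\vartheta^{(1)}_{k}+\vartheta^{(2)}_{k}$ is admissible for $K^{+}(M,A')$. On the overlap $D_{1}\cap D_{2}$ both fields coexist, but the equi-integrability of $|\nabla'^{2}\varphi^{(j)}_{k}|^{2}$ and $|\nabla_{h_{n(k)}}\tilde\psi^{(j)}_{k}|^{2}$ forces the joint contribution to concentrate on sets of vanishing measure, so a direct quadratic expansion combined with the estimate of Lemma \ref{lem:ocjena} yields in the limit $K^{+}(M,A')\leq K(M,D_{1})+K(M,D_{2})$. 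Sending $D_{j}\nearrow A_{j}$ and $A'\nearrow A$ along $\mathcal D$ gives (l). Additivity (g) then follows from (l) together with the converse inequality, which comes from restricting a near-optimal competitor on $A_{1}\cup A_{2}$ to the two disjoint pieces.

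Finally I would extend $K$ to arbitrary open sets and obtain the remaining measure-theoretic items. Set $K(M,A):=\sup\{K(M,D):D\in\mathcal D,\,D\ll A\}$; monotonicity (e) gives $K(M,A)\leq K^{-}(M,A)$, while exhausting $A$ by $D_{n}\in\mathcal D$ with $D_{n}\ll D_{n+1}\ll A$ and $\bigcup_{n}D_{n}=A$, using Lemma \ref{lem:glavnazamjena} on $D_{n}$ to truncate to zero trace and extending competitors by zero, yields $K^{+}(M,A)\leq\lim_{n\to\infty}K(M,D_{n})=K(M,A)$. This simultaneously establishes (a) and (c); continuity (f) then follows from (c) together with (e). The main obstacle I anticipate is the overlap estimate in (l): even though both decomposed fields from Lemma \ref{lem:glavnazamjena} carry vanishing energy in sets of small measure, one must also check that adding two zero-trace correctors on $D_{1}$ and $D_{2}$ does not generate an order-one quadratic cross-contribution on $D_{1}\cap D_{2}$, which is where the equi-integrability and the explicit form \eqref{igor5} of the Griso decomposition become essential.
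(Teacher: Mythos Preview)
Your overall architecture is reasonable, but the argument for subadditivity (l) has a genuine gap, and since you make (l) the load-bearing step for (g), (a), (c), (f), this propagates.

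Your plan for (l) is to take zero-trace correctors $\vartheta^{(j)}_k$ on $D_j$ via Lemma~\ref{lem:glavnazamjena}, extend by zero, and test $K^{+}(M,A')$ with $\vartheta^{(1)}_k+\vartheta^{(2)}_k$. You then claim that on the overlap $D_1\cap D_2$ ``the equi-integrability \ldots\ forces the joint contribution to concentrate on sets of vanishing measure''. This is not true: $D_1\cap D_2$ is a \emph{fixed} open set of positive measure, and equi-integrability says nothing about the size of $\int_{(D_1\cap D_2)\times I}Q^{h}(x,\iota(M)+\nabla_h\vartheta^{(1)}_k+\nabla_h\vartheta^{(2)}_k)$. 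For a quadratic form the inequality $Q(a+b+c)\le Q(a+b)+Q(a+c)$ is false in general (take $Q(t)=t^{2}$, $a=0$, $b=c=1$), so the overlap contribution is a genuine order-one term that neither equi-integrability nor the Griso decomposition removes. The concern you flag in your last sentence is exactly the obstruction, and it is not resolved by your proposal.

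The paper avoids the overlap entirely and proves (l) \emph{last}, after (b), (g), (h) are already available. Given $D\subset D_1\cup D_2$ with $D,D_1,D_2\in\mathcal D$, one picks $D_2'\in\mathcal D$ with $D_2'\ll D_2\setminus\bar D_1$, so that $D_1$ and $D_2'$ are disjoint. Additivity (g) on disjoint sets gives $K(M,D_1\cup D_2')=K(M,D_1)+K(M,D_2')$; then localisation (b) and the Lipschitz estimate (h) give
\[
K(M,D)=K(M1_{D\times I},\omega)\le K(M1_{(D_1\cup D_2')\times I},\omega)+C\|M\|_{L^2((D_2\setminus(D_1\cup D_2'))\times I)}\|M\|_{L^2},
\]
and the error vanishes as $D_2'\nearrow D_2\setminus\bar D_1$. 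No gluing on an overlap is ever attempted.

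Two smaller points. Your ``immediate'' proof of (b) only gives one inequality; the other direction needs to extend a corrector on $A_1$ to $A_2$, which is precisely where the zero-trace statement of Lemma~\ref{lem:glavnazamjena} is used (the paper does this first for $D\in\mathcal D$, see \eqref{app100}, then approximates). Your argument for (k) via weak convergence of $\sym\nabla_h\psi^h$ is essentially correct but should be stated for the $2\times 2$ block only: the $(i3)$ and $(33)$ entries of $\sym\nabla_h\psi^h$ need \emph{not} converge weakly to zero, but since $\iota(M)$ vanishes there this does not matter; the paper instead integrates by parts against smooth $M$ and uses density.
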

\begin{proof}
Using Lemma \ref{lem:glavnazamjena} it is easy to see that for $A \subset \omega$ open and an arbitrary $D \in \mathcal{D}$, $D \subset A$ we have
\begin{equation} \label{app100}
K^+_{(h_n)_{n\in \N}}(M1_{D\times I},A)=K^{-}_{(h_n)_{n\in \N}}(M1_{D \times I},A)=K(M,D),
\end{equation}
Namely, the inequality
$K^-_{(h_n)_{n\in \N}}(M 1_{D\times I},$ $A)$ $\geq$ $K(M,D)$, follows immediately from the definition in Remark \ref{minat2}. To prove the inequality
$K^+_{(h_n)_{n\in \N}}(M1_{D\times I},A) \leq K(M,D)$
it is enough to prove that for every $r>0$ we have $\limsup_{n \to \infty} K_n(M,D,A,B(r)) \leq K(M,D)$, where
$$ K_n (M,D,A,B(r))=\inf_{\psi \in H^1(A \times I,\R^3) \atop \|(\psi_1,\psi_2,h_n\psi_3)\|_{L^2} \leq r} \int_{A \times I} Q^{h_n}\left(x,\iota(M1_{D\times I})+\nabla_{h_n} \psi\right) \, dx.$$
To prove this take a subsequence, still denoted by $(h_{n(k)})$, where is $\limsup$  accomplished.
Then we take, using Lemma \ref{lem:glavnazamjena} a further subsequence, still denoted by $(h_{n(k)})_{k \in \N}$, and $(\chi_k)_{k \in \N} \subset H^1(D \times I,\R^3)$ such that $(\chi_{k,1},\chi_{k,2}, h_{n(k)} \chi_{k,3}) \to 0$, strongly in $L^2$, for each $k \in \N$, $\chi_k=0$ on $\partial D \times I$ and $$K(M,D)=\lim_{k\to \infty} \int_{D \times I} Q^{h_{n(k)}}(x,\iota(M)+\nabla_{h_{n(k)}} \chi_k)\, dx.$$
By extending $\chi_k=0$ on $(A\backslash D)\times I$ we obtain
\begin{eqnarray*}
K(M,D)&=&\lim_{k\to \infty} \int_{A \times I} Q^{h_{n(k)}}(x,\iota(M1_{D \times I})+\nabla_{h_{n(k)}} \chi_k)\, dx \\
& \geq & \lim_{k\to \infty} K_{n(k)} (M,D,A,B(r)).
\end{eqnarray*}
By the arbitrariness of $r$ we have the claim. (a) and (b)  follow from (\ref{app100}) and Lemma \ref{lem:ocjena} by an approximation argument i.e. by exhausting $A$ with the sets in $\mathcal{D}$.
It is easy to notice from the definition in Remark \ref{minat2} that $K(M,D) \leq K(M,A)$, for $D\in \mathcal{D}$, $D\subset A$. (c) then easily follows from (b) and  Lemma \ref{lem:ocjena}.
From the definition in Remark \ref{minat2}, by taking the null sequence, it is easy to see that for every $D \in\mathcal{D}$ we have $K(M,D) \leq \beta \|M\|^2_{L^2(D)}$. (d) now follows from (c).
 (e) easily follows from (c). (f) is again the direct consequence of (b) and  Lemma \ref{lem:ocjena}.
 To prove (g) first choose $D_1,D_2 \in \mathcal{D}$, $D_1\ll A_1$ and $D_2 \ll A_2$. We have that $D_1 \cap D_2=\emptyset$. From the definition in Remark \ref{minat2} it is easy to see that
 $$K(M,D_1 \cup D_2)=K(M,D_1)+K(M,D_2).$$
(g) now follows from (f).
 To prove (h) notice that from Lemma \ref{lem:ocjena} we can conclude that there exists
 $C>0$ dependent only on $\alpha,\beta$ such that for each  $D \in \mathcal{D}$ is valid
\begin{eqnarray}\label{ocjena111111}
\left| K(M_1,D)-K(M_2,D)\right|&\leq& C \|M_1-M_2\|_{L^2}\left(\|M_1\|_{L^2}+\|M_2\|_{L^2}\right),\\\nonumber & &  \ \forall M_1,M_2 \in L^2(\omega,\R^{2\times 2}_{\sym}).
\end{eqnarray}
 (h) follows from (f) and (\ref{ocjena111111}). To prove (i) we can take $D \in \mathcal{D}$ and $M \in L^2(\Omega,\R^{2 \times 2}_{\sym})$. Define by
 \begin{equation}
 K_n\left(M,D,B(r)\right)=\min_{\psi \in H^1(D \times I,\R^3) \atop \|(\psi_1,\psi_2,h_n\psi_3)\|_{L^2} \leq r}
 \int_{D \times I} Q^{h_n}\left(x,\iota(M)+\nabla_{h_n} \psi\right) \, dx.
 \end{equation}
The minimum in the above expression exists by the direct methods of the calculus of variation. Notice that
 $$K(M,D)=\lim_{r \to 0} \lim_{n \to \infty} K_n\left(M,D,B(r)\right), $$
 where limit in $n$ can be taken on any converging subsequence (dependent on $r$).
 Since every $Q^{h_n}$ is quadratic we have
$$t^2 K_n\left(M,D,B(r)\right)=K_n\left(tM,D,B(|t|r)\right). $$
 From this identity it follows $K(tM,D)=t^2K(M,D)$.
 By approximation we obtain (i).
 To prove (j) take $M_1,M_2 \in L^2(\Omega,\R^{2\times 2}_{\sym})$, $D \in \mathcal{D}$ and for $\alpha=1,2$, $\psi^{\alpha,r,n}\in H^1(D \times I,\R^3)$
 such that
 $$K_n\left(M_\alpha,D,B(r)\right)=\int_{D \times I} Q^{h_n}\left(x,\iota(M_{\alpha})+\nabla_{h_n} \psi^{\alpha,r,n}\right) \, dx.   $$
 and $\|(\psi^{\alpha,r,n}_1,\psi^{\alpha,r,n}_2,h_n\psi^{\alpha,r,n}_3)\|_{L^2} \leq r$.
 Notice that:
 \begin{eqnarray*}
& & K_n\left(M_1+M_2,D,B(2r)\right)+K_n\left(M_1-M_2,D,B(2r)\right)   \\ & &\hspace{8ex} \leq \int_{D \times I} Q^{h_n}\left(x,\iota(M_1+M_2)+\nabla_{h_n} \psi^{1,r,n}+\nabla_{h_n} \psi^{2,r,n}\right) \, dx\\ & &\hspace{10ex}+ \int_{D \times I} Q^{h_n}\left(x,\iota(M_1-M_2)+\nabla_{h_n} \psi^{1,r,n}-\nabla_{h_n} \psi^{2,r,n}\right) \, dx\\ &&\hspace{8ex}=
 2\int_{D \times I} Q^{h_n}\left(x,\iota(M_1)+\nabla_{h_n} \psi^{1,r,n}\right) \, dx+2\int_{D \times I} Q^{h_n}\left(x,\iota(M_2)+\nabla_{h_n} \psi^{2,r,n}\right)\\ &&\hspace{8ex}=2K_n(M_1,D,B(r))+2K_n(M_2,D,B(r)),
 \end{eqnarray*}
where we have used (e) of Proposition \ref{kvforme}.
By letting $n \to \infty$ and then $r \to 0$ we obtain that
$$K(M_1+M_2,D)+K(M_1-M_2,D) \leq 2K(M_1,D)+2K(M_2,D). $$
(j) follows by density and (f). To prove (k) take $M \in C^1(\Omega,\R^{2 \times 2}_{\sym})$,
such that $M=0$ in a neighborhood of $\partial \Omega$,
$D \in \mathcal{D}$ and
$(\chi_n)_{n \in \N} \subset H^1(D \times I,\R^3)$ such that
$(\chi_{n,1},\chi_{n,2}, h_{n} \chi_{n,3}) \to 0$, strongly in $L^2$ and such that
$$K(M,D)=\lim_{n\to \infty} \int_{D \times I} Q^{h_{n}}(x,\iota(M)+\nabla_{h_{n}} \chi_n)\, dx.$$
From (Q1) we conclude
\begin{eqnarray*}
 \int_{D \times I} Q^{h_{n}}(x,\iota(M)+\nabla_{h_{n}} \chi_n)\, dx &\geq&
 \alpha \int_{D \times I} \left|M+\sym \nabla' (\chi_{n,1}\, ,\, \chi_{n,2} )\right|^2 \, dx \\ & \geq&
\alpha \|M\|^2_{L^2}-\alpha \int_{D \times I} \diver M \cdot (\chi_{n,1}\, , \, \chi_{n,2}) \, dx.
\end{eqnarray*}
By letting $n \to \infty$ and using the fact that $(\chi_{n,1}\, , \, \chi_{n,2}) \to 0$ strongly in $L^2$ we have that $K(M,D) \geq \alpha \|M\|^2_{L^2}$. (k) follows from (f) and (h).
To prove (l) we proceed as follows;
by using  (c) and (e)  it is enough to prove that for arbitrary $D,D_1,D_2 \in \mathcal{D}$ such that $D \subset D_1 \cup D_2$ we have
$$ K(M,D) \leq K(M,D_1)+K(M,D_2). $$
Take $D_2'\in\mathcal{D}$ such that $D_2'\ll D_2\backslash \bar{D}_1$.
From (f)  we have
$$K(M,D_1 \cup D_2')=K(M,D_1)+K(M,D_2'). $$
From (g)   we have for some $C>0$
\begin{eqnarray*}
& &K(M,D)=K(M1_{D \times I},\omega)\\ & & \leq K(M1_{(D_1 \cup D_2') \times I},\omega)+C\|M\|_{L^2\left( (D_2\backslash(D_1 \cup D_2')\times I\right)}\|M\|_{L^2}\\ & &=K(M,D_1 \cup D_2')+C\|M\|_{L^2\left( (D_2\backslash(D_1 \cup D_2')\times I\right)}\|M\|_{L^2}\\ & &=K(M,D_1)+K(M,D_2')+C\|M\|_{L^2\left( (D_2\backslash(D_1 \cup D_2')\times I\right)}\|M\|_{L^2}
\\ & &\leq K(M,D_1)+K(M,D_2)+C\|M\|_{L^2\left( (D_2\backslash(D_1 \cup D_2')\times I\right)}\|M\|_{L^2}.
\end{eqnarray*}
The claim follows by the arbitrariness of $D_2'$.

\end{proof}

At the end of this section  we improve Lemma \ref{lem:glavnazamjena} for arbitrary $A \subset \omega$ open.
\begin{lemma}\label{lem:improveglavnazamjena}
 Let Assumption \ref{ass:main} be satisfied for a sequence $(h_n)_{n \in \N}$ monotonly decreasing to zero.
 Take $M \in  L^2(\Omega,\R^{2 \times 2}_{\sym})$ and $A \subset \omega$ open.
Then there exists a subsequence $(h_{n(k)})_{k\in \N}$ and $(\vartheta_k)_{k \in \N} \subset H^1(A \times I,\R^3)$ such that
\begin{enumerate}[(a)]
\item
$(\vartheta_{k,1},\vartheta_{k,2},h_{n(k)} \vartheta_{k,3}) \to 0$ strongly in $L^2$,
\item
 $(|\sym \nabla_{h_{n(k)}} \vartheta_k|^2 )_{k \in \N}$ is equi-integrable,
 $$ \sym \nabla_{h_{n(k)}} \vartheta_k=-x_3\iota(\nabla'^2 \varphi_{k})+\sym \nabla_{h_{n(k)}} \tilde{\psi}_{k},$$
 where $\left(|\nabla^2 \varphi_k|^2\right)_{k\in\N}$ and $\left(| \nabla_{h_{n(k)}}\tilde{\psi}_k |^2\right)_{k \in \N}$ are equi-integrable and
 $\varphi_k \to 0$ strongly in $H^1$ and $\tilde{\psi}_k \to 0$ strongly in $L^2$.
 Also the following is valid
 $$\limsup_{k \to \infty} \left( \| \varphi_k \|_{H^2(A)}+\|\nabla_{h_{n(k)}}{\tilde{\psi}}_k\|_{L^2(A)}\right)\leq C \left(  \|M\|^2_{L^2}+1 \right),$$
 where $C$ is independent of the domain $A$ and for each $k \in \N$ we have $\vartheta_k=0$ in a neighborhood of $\partial A \times I$, i.e., $\varphi_k=\nabla' \varphi_k=0$, in a neighborhood of $\partial A$ and $\tilde{\psi}_k=0$ in a neighborhood  of $\partial A \times I$.
\item $$K(M,A)=\lim_{k\to \infty} \int_{A \times I} Q^{h_{n(k)}}(x,\iota(M)+\nabla_{h_{n(k)}} \vartheta_k)\, dx.$$

\end{enumerate}
\end{lemma}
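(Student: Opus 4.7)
The plan is to exhaust $A$ from inside by $C^{1,1}$ subdomains from $\mathcal{D}$, apply Lemma \ref{lem:glavnazamjena} on each piece, and glue the resulting sequences via a Cantor diagonal extraction; the crucial structural input that makes this work is that Lemma \ref{lem:glavnazamjena} already delivers $\vartheta_k^{(m)}$, $\varphi_k^{(m)}$, $\tilde{\psi}_k^{(m)}$ with zero boundary trace, so zero extension across $\partial D_m$ is harmless.

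First I would pick an increasing family $(D_m)_{m\in\N}\subset\mathcal{D}$ with $D_m\ll D_{m+1}\ll A$ and $\bigcup_m D_m=A$; by inner regularity and continuity from Lemma \ref{lem:svojstva}(c),(f) one has $K(M,D_m)\to K(M,A)$, and dominated convergence yields $\|M\|_{L^2((A\setminus D_m)\times I)}\to 0$. For each $m$, the last assertion of Lemma \ref{lem:glavnazamjena} furnishes a subsequence $(h^m_k)_k$ of $(h_n)$ and $(\vartheta^{(m)}_k)_k\subset H^1(D_m\times I,\R^3)$ with the claimed equi-integrability, decomposition and vanishing-trace properties, and such that $\int_{D_m\times I}Q^{h^m_k}(x,\iota(M)+\nabla_{h^m_k}\vartheta^{(m)}_k)\,dx\to K(M,D_m)$. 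Extending these functions by zero to $A\times I$ is legitimate because they vanish on $\partial D_m\times I$, and since $D_m\ll A$ the extensions vanish in a neighborhood of $\partial A\times I$, as required.

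Then I would split
$$\int_{A\times I} Q^{h^m_k}\bigl(x,\iota(M)+\nabla_{h^m_k}\vartheta^{(m)}_k\bigr)\,dx \;=\; \int_{D_m\times I}(\cdots)\,dx \;+\; \int_{(A\setminus D_m)\times I} Q^{h^m_k}(x,\iota(M))\,dx,$$
bound the second piece by $\beta\|M\|^2_{L^2((A\setminus D_m)\times I)}$ using (Q1), let $k\to\infty$ first and then $m\to\infty$, and extract a Cantor diagonal $h_{n(k)}:=h^{m(k)}_{k}$, $\vartheta_k:=\vartheta^{(m(k))}_{k}$ with $m(k)\to\infty$ slowly enough that the integral on $A\times I$ converges to $K(M,A)$. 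Equi-integrability of $|\sym\nabla_{h_{n(k)}}\vartheta_k|^2$, $|\nabla'^2\varphi_k|^2$ and $|\nabla_{h_{n(k)}}\tilde{\psi}_k|^2$ on $A$ is inherited from the $m$-levels after zero extension, and the strong $L^2$ convergences in (a) and in the displayed bounds of (b) transfer along the diagonal in the same way.

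The main obstacle is the bound in (b) with a constant $C$ independent of $A$, since Lemma \ref{lem:glavnazamjena} only delivers a $D_m$-dependent $C(D_m)$. Since the extended $\vartheta_k$, $\varphi_k$, $\tilde{\psi}_k$ all vanish near the ambient boundary, I would re-run the Griso-type decomposition of Proposition \ref{igor2} on a fixed $C^{1,1}$ domain containing $\omega$, invoking Remark \ref{periodic1} to discard the $\|r\|_{L^2}$ and $\|(\hat\psi_1,\hat\psi_2,h\hat\psi_3)\|_{L^2}$ terms thanks to the zero trace; the resulting constant then depends only on the fixed ambient domain and on $\alpha,\beta$. Coercivity (Lemma \ref{lem:svojstva}(k)) together with the bound $\int Q^{h^m_k}\le\beta\|M\|^2_{L^2}+1$ controls $\|\sym\nabla_{h_{n(k)}}\vartheta_k\|_{L^2(A)}$ uniformly in $k$, and the ambient Griso constant then yields the desired $A$-independent bound. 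The subtle point — the one I expect to be genuinely delicate — is to arrange this re-decomposition so that it preserves the equi-integrability coming from Step 2, so that the $\varphi_k,\tilde\psi_k$ actually appearing in the statement satisfy (a), (b) and (c) simultaneously.
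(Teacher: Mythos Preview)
Your exhaustion-and-diagonalization scheme is a genuinely different route from the paper, and most of it is sound: the splitting of the $A$-integral, the zero extension, the inner regularity $K(M,D_m)\to K(M,A)$, and the diagonal extraction for (a) and (c) all go through. The real issue is precisely the one you flag yourself at the end: the bound in (b) with a constant independent of $A$. Re-running the Griso decomposition of Proposition~\ref{igor2} on a fixed ambient domain does give a new pair $(\bar\varphi_k,\bar\psi_k)$ with the right uniform bound, but these are \emph{different} functions from the $(\varphi_k,\tilde\psi_k)$ you built by diagonalization, and they carry neither the equi-integrability nor the vanishing near $\partial A$. To get a single pair satisfying all of (b) simultaneously you would have to re-extract an equi-integrable subsequence (Proposition~\ref{ekvi1}, Theorem~\ref{ekvi2}) and then re-apply the truncation Lemma~\ref{nulizacija}, checking along the way that (c) survives each further subsequence. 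This can be done, but it is exactly the ``delicate'' step you leave open, so as written the proof has a gap.

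The paper's argument avoids this loop by a single trick that you might find instructive: instead of exhausting $A$, it fixes once and for all a ball $B(r)\supset\omega$, extends $Q^h$ trivially to $(B(r)\setminus\omega)\times I$, and applies Lemma~\ref{lem:glavnazamjena} \emph{once} with $\tilde M:=M\,1_{A\times I}$ and $D:=B(r)$. The constant in (b) is then $C(B(r))$, automatically independent of $A$; equi-integrability comes directly from that single application; and the localization identity $K(M,A)=K(M1_{A\times I},B(r))$ from Lemma~\ref{lem:svojstva}(b) gives (c). A single cut-off as in Lemma~\ref{nulizacija} then produces $\varphi_k,\tilde\psi_k$ vanishing near $\partial A$, with $\|\varphi_k-\tilde\varphi_k\|_{H^2}\to 0$ and $\|\nabla_{h_{n(k)}}\tilde\psi_k-\nabla_{h_{n(k)}}\tilde{\tilde\psi}_k\|_{L^2}\to 0$, so the uniform bound and the equi-integrability are both inherited. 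The moral is that localizing $M$ rather than localizing the domain lets you work on one fixed $C^{1,1}$ set throughout, which is exactly what makes the constant domain-free without any re-decomposition.
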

\begin{proof}
Take $r>0$ such that $B(r) \supset \omega$. Extend $Q^h$ on $(B(r) \backslash \omega)\times I$ by e.g. $Q^h(x,G)=\beta |\sym G|^2$, for all $x \in (B(r) \backslash \omega) \times I$. Apply Lemma \ref{lem:glavnazamjena} to $\tilde{M}=M1_{A}$ and $D=B(r)$ to obtain the sequences $(\tilde{\vartheta}_k)_{k \in \N} \subset H^1(B(r) \times I,\R^3)$, $(\tilde{\varphi}_k)_{k \in N} \subset H^2(B(r))$, $(\tilde{\tilde{\psi}}_k)_{k \in \N} \subset H^1(B(r) \times I,\R^3)$ that satisfy (a), (b), (d) of Lemma \ref{lem:glavnazamjena}. In the same way as in Lemma \ref{nulizacija} for each $\eps>0$ we choose $A_\eps \ll A$ with Lipschitz boundary such that $\meas (A \backslash A_\eps)<\varepsilon$ and a cut off function $\eta_\epsilon \in C_0^\infty(A)$ which is $1$ on $\bar{A}_{\eps}$. Again by using the diagonal procedure, we obtain a sequence $(\varphi_k)_{k \in \N} \subset H^2(A)$ and $(\tilde{\psi}_k)_{k \in \N} \subset H^1(A\times I,\R^3)$ such that for each $k \in \N$, $\varphi_k=\nabla \varphi_k=0$  in a neighborhood of  $B(r) \backslash A$, i.e., $\tilde{\psi}_k=0$ in a neighborhood of $(B(r) \backslash A) \times I$ and
$$ \lim_{k \to \infty} \left(\|\tilde{\varphi}_k-\varphi_k\|_{H^2(A)}+\|\tilde{\tilde{\psi}}_k-\tilde{\psi}_k\|_{L^2(A \times I)}+\|\nabla_{h_{n(k)}}\tilde{\tilde{\psi}}_k-\nabla_{h_{n(k)}}\tilde{\psi}_k\|_{L^2(A \times I)}\right)=0.$$
Define again
$$\vartheta_k:= \tilde{\psi}_{k} +\left(\begin{array}{c} 0\\ 0\\ \tfrac{\varphi_k}{h_{n(k)}} \end{array} \right)-x_3 \left(\begin{array}{c} \partial_1 \varphi_k \\ \partial_2 \varphi_k \\ 0 \end{array} \right).$$
It is easy to see from (b) of Lemma \ref{lem:svojstva} that
\begin{eqnarray*}
K(M,A)&=&K(M1_{A \times I},B(r))=\lim_{k \to \infty} \int_{B(r) \times I} Q^{h_{n(k)}}(x,\iota(M 1_{A\times I})+\nabla_{h_{n(k)}} \tilde{\vartheta}_k)\, dx \\ &\geq& \lim_{k \to \infty} \int_{A \times I} Q^{h_{n(k)}}(x,\iota(M)+\nabla_{h_{n(k)}} \vartheta_k)\, dx \geq K(M,A).
\end{eqnarray*}
From this we have the claim.
\end{proof}

\subsection{Proof of Proposition \ref{identi} and Lemma \ref{lem:pojasnjenje} }

\begin{proof}[Proof of Proposition \ref{identi}]
The proof follows the standard  steps in $\Gamma$-convergence theory. \\
Notice that for $M \in \mathcal{S}_{vK}(\omega)$
\begin{equation} \label{anna1}
 \|M\|^2_{L^2(\Omega)}=\|M_1\|^2_{L^2(\omega)}+\tfrac{1}{12}\|M_2\|^2_{L^2(\omega)}.
 \end{equation}
\step 1 Existence of $Q$. \\
From Theorem \ref{DMtm}  as well as the properties (d), (e), (g), (f) and (l) from Lemma \ref{lem:svojstva} we conclude from Radon-Nykodim theorem that for an arbitrary $M \in \mathcal{S}_{vK}(\omega)$ there exists $Q_M \in L^1(\omega)$, a positive function, such that
\begin{equation} \label{eq:postojiiii}
 K(M,A)= \int_A Q_M(x') \, dx', \forall A \subset \omega \text{ open}.
\end{equation}
Take a countable dense subset $\mathcal{M}$ of $\R^{2 \times 2}_{\sym}$ and define
$$E:=\{x' \in \omega:\, x' \text{ is a Lebesgue point for } Q_{M_1+x_3M_2} \text{ for every } M_1,M_2 \in \mathcal{M} \}.$$
Notice that $\meas(\omega \backslash E)=0$.
Define also
 \begin{eqnarray} \label{defQ}
&& Q(x',M_1,M_2)=Q_{M_1+x_3 M_2}(x')=\lim_{r \to 0} \tfrac{1}{|B(x',r)|} K\left(M_1+x_3M_2,B(x',r)\right),\\ \nonumber &&\hspace{10ex} \text{ for } M_1,M_2 \in \mathcal{M} \text{ and } x' \in E.
 \end{eqnarray}
Notice that from the property (h) in Lemma \ref{lem:svojstva} we have
\begin{eqnarray*}
&& |Q(x',M_1,M_2)-Q(x',M_1',M_2')| \leq \\ && \hspace{5ex} C\left(|M_1-M_1'|+|M_2-M_2'| \right)(|M_1+M_1'|+|M_2+M_2'|),\\ && \hspace{10ex} \text{ for all } M_1, M_1', M_2, M_2'  \in \mathcal{M},\, x' \in E.
\end{eqnarray*}
Thus we can extend $Q(\cdot,\cdot,\cdot)$ by continuity on $E \times \R^{2 \times 2}_{\sym}\times \R^{2 \times 2}_{\sym}$. Extend it by zero on $\omega \times \R^{2 \times 2}_{\sym}\times$ $\R^{2 \times 2}_{\sym}$. Notice that such defined $Q$ satisfies
\begin{eqnarray}
\label{svQ1}
&& |Q(x',M_1,M_2)-Q(x',M_1',M_2')| \leq \\ \nonumber && \hspace{5ex} C\left(|M_1-M_1'|+|M_2-M_2'| \right)(|M_1+M_1'|+|M_2+M_2'|),\\ \nonumber && \hspace{10ex} \text{ for all } M_1, M_1', M_2, M_2'  \in \R^{2 \times 2}_{\sym},\, x' \in \omega,
 \\
\label{svQ2} & &|Q(x',M_1,M_2)| \leq \beta \left(|M_1|^2+|M_2|^2 \right), \text{ for all } M_1,M_2  \in \R^{2 \times 2}_{\sym},\, x' \in \omega.
\end{eqnarray}
Also, from the property (h) in Lemma \ref{lem:svojstva} and (\ref{svQ1}) we conclude that
\begin{equation}\label{drugadefQ}
Q(x',M_1,M_2)=\lim_{r \to 0} \tfrac{1}{|B(x',r)|} K\left(M_1+x_3M_2,B(x',r)\right), \forall M_1,M_2 \in \R^{2 \times 2}_{\sym} \text{ and } x' \in E.
\end{equation}
By approximating $M \in \mathcal{S}_{vK}(\omega)$ by piecewise constant maps with values in the set $\{M_1+x_3 M_2: M_1,M_2 \in \mathcal{M} \} $ we conclude from (b), (g), (h) of Lemma \ref{lem:svojstva} as well as the properties (\ref{svQ1}) and (\ref{svQ2})  that
\begin{equation}
K(M,\omega)=\int_\omega Q(x',M_1(x'),M_2(x')) \, dx', \forall M\in \mathcal{S}_{vK}(\omega).
\end{equation}
By using (b) of Lemma \ref{lem:svojstva} as well as the fact that $Q(x',0,0)=0$ $\forall x' \in \omega$ we conclude (\ref{eq:postojiiii}).\\
\step 2 Quadraticity and coercivity of $Q$. \\
To prove that $Q$ is quadratic form we use (\ref{drugadefQ}), Proposition \ref{kvforme} and (i), (j) of Lemma \ref{lem:svojstva}. To prove coercivity we use (\ref{anna1}), (\ref{drugadefQ}) and property (k) of Lemma \ref{lem:svojstva}.
\end{proof}
\begin{proof}[Proof of Lemma \ref{lem:pojasnjenje}]
By Remark \ref{razjasnjenje} and Lemma \ref{podniz1} it is enough to see that every sequence $(h_n)_{n \in \N}$ monotonly decreasing to zero has subsequence such $(h_{n(k)})_{k \in \N}$ such that
$$K^{-}_{(h_{n(k)})_{k \in \N}}(M,D)=:K(M,D), \ \forall n \in \N, M \in \mathcal{S}_{vK}(\omega), D \in \mathcal{D},$$
where $K(M,D)$ is independent of the sequence.
This follows from Lemma \ref{podniz}, Remark \ref{razjasnjenje} and Proposition \ref{identi}, i.e., (\ref{eq:postojiiii}) and (\ref{defQ}).
\end{proof}

\subsection{Proof of Theorem \ref{thm:1}}
The proof of the following proposition is given in \cite[Proposition 3.1]{NeuVel-12}. It characterizes the deformations which have the order of the energy $h^4$.
\begin{proposition}
  \label{P1}
  Let $\ya\in H^1(\Omega,\R^3)$ and $h>0$.
  There exist  $(\bar\Ra,\ua,v)\in \SO 3\times H^1(\omega,\R^2)\times H^2_\loc(\omega)$ and correctors
  $w\in H^1(\omega)$, $\psi\in H^1(\Omega,\R^3)$ with
  \begin{gather*}
    \int_\omega w=0,\qquad\int_I\psi(\hat
    x,x_3)\,dx_3=0\quad\text{ for almost every }x'\in\omega,
  \end{gather*}
  such that
  \begin{equation}\label{P1.0}
    \bar{\Ra}^\transpose\left(\ya-\fint_\Omega\ya\ud x\right)=
    \left(\begin{array}{c}
        x'\\ hx_3
      \end{array}\right)
    +
    \left(\begin{array}{c}
        h^2\ua\\ h (v+h w)
      \end{array}\right)
    -
    h^2x_3
    \left(\begin{array}{c}
        \hat\nabla v\\ 0
      \end{array}\right)
    +
    h^2\psi
  \end{equation}
  and
  \begin{equation}\label{P1.1}
    \|\ua\|^2_{H^1(\omega)}+\|v\|^2_{H^1(\omega)}+\|w\|^2_{H^1(\omega)}+\frac{1}{h^2}\|\psi\|^2_{L^2(\Omega)}\leq
    C(\omega)(e_h(\ya)+e_h(\ya)^2).
  \end{equation}

  In addition, for all $D\ll\omega$ compactly contained in $\omega$ we have
  \begin{equation}\label{P1.2}
    \|\nabla^2v\|^2_{L^2(D)}+\|\nabla_h\psi\|^2_{L^2(D\times
      I)}\leq C(D) (
    e_h(\ya)+e_h(\ya)^2).
  \end{equation}

If the boundary of $\omega$ is of class $C^{1,1}$, then
  $(\ua,v)\in\mathcal A(\omega)$ and \eqref{P1.2} holds for $D$ replaced by $\omega$.
\end{proposition}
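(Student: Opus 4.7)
The plan is to follow the strategy of Friesecke--James--M\"uller in \cite{FJM-02,FJM-06} adapted to the von K\'arm\'an energy scaling $e_h(y)\lesssim 1$, with one ingredient beyond \cite{FJM-06}: a correction that upgrades the vertical displacement $v$ from $H^1(\omega)$ to $H^2_\loc(\omega)$, so that the limit triple lies in $\mathcal A(\omega)$ and \eqref{P1.2} holds.

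First I would cover $\omega$ by squares of side $h$ and apply the geometric rigidity theorem on each slab to obtain a piecewise constant rotation $R^h:\omega\to\SO 3$ with $\int_\Omega|\nabla_h y-R^h|^2\,dx\le Ch^4 e_h(y)$. A standard mollification then yields $\tilde R^h\in H^1(\omega,\R^{3\times 3})$ with an analogous $L^2$ bound and gradient controlled by $e_h(y)$. Pick $\bar R^h\in\SO 3$ as the projection of $\fint_\omega \tilde R^h$ onto $\SO 3$ and form a suitably scaled infinitesimal rotation field $A^h$ encoding the deviation of $(\bar R^h)^T\tilde R^h$ from the identity; thanks to the quadratic nature of $\sym A^h$ together with Korn's inequality, $A^h$ is bounded in $H^1(\omega,\R^{3\times 3})$ by $C(e_h(y)+e_h(y)^2)^{1/2}$ and its symmetric part tends to zero.

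Next I would set $\tilde y:=(\bar R^h)^T\bigl(y-\fint y\bigr)$ and extract $(v^h,\ua^h,w^h)$ from the $x_3$-moments of $\tilde y-(x',hx_3)^T$, defining $\psi^h$ as the residual with zero $x_3$-mean, to arrive at the ansatz
\[
\tilde y - (x',\,hx_3)^T = \bigl(h^2\ua^h,\ hv^h+h^2 w^h\bigr)^T - h^2 x_3\bigl(\nabla' v^h,\,0\bigr)^T + h^2\psi^h.
\]
The off-diagonal $(3,\alpha)$-entries of $A^h$ then pair with $\nabla' v^h$, the in-plane skew part governs the rigid-motion piece of $\ua^h$, and the symmetric part contributes at order $h^2$; the full estimate \eqref{P1.1} for $\ua^h,v^h,w^h,\psi^h$ follows by combining the rigidity inequality, Poincar\'e, and the $H^1$-bound on $A^h$.

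The main obstacle, and the step where this proposition strengthens \cite{FJM-06}, will be promoting $v^h$ from $H^1(\omega)$ to $H^2_\loc(\omega)$, since a priori only $\nabla' v^h$ is controlled in $H^1$ via $A^h$, not $v^h$ itself. Mimicking the regularization used later in Lemma \ref{igor1}, for each $D\ll\omega$ I would replace $v^h$ by the solution $\hat v^h$ of the Neumann problem
\[
-\triangle' \hat v^h = \nabla'\!\cdot\!(A^h_{31},A^h_{32})\ \text{in }D,\qquad \partial_\nu \hat v^h = (A^h_{31},A^h_{32})\cdot\nu\ \text{on }\partial D,
\]
and absorb the $H^1$-small remainder $v^h-\hat v^h$ into $w^h$ and $\psi^h$. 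Standard elliptic regularity then gives $\|\nabla^2\hat v^h\|_{L^2(D)}\le C(D)\|A^h\|_{H^1(\omega)}$, which delivers \eqref{P1.2}; when $\partial\omega\in C^{1,1}$ the same problem solved on all of $\omega$ promotes $v$ to $H^2(\omega)$ and places $(\ua,v)$ in $\mathcal A(\omega)$.
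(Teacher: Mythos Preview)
Your outline is correct and mirrors the approach of \cite[Proposition~3.1]{NeuVel-12}, which the paper cites rather than reproves: the FJM rigidity/mollification machinery produces $\bar R$, $A^h$, $u$, the raw $v$, $w$, $\psi$ with the $H^1$-level bounds \eqref{P1.1}, and the genuinely new step is exactly the elliptic regularization of $v$ via the Neumann problem driven by the off-diagonal entries of $A^h$ (the same mechanism later reused in Lemma~\ref{igor1}), which upgrades $v$ to $H^2_{\loc}$ (globally if $\partial\omega\in C^{1,1}$) and yields \eqref{P1.2}. The only point to make precise when you write it out is the scaling of the remainder: one has $\|\nabla' v_{\mathrm{raw}}-(A^h_{31},A^h_{32})\|_{L^2}\le C h\,(e_h+e_h^2)^{1/2}$, so $(v_{\mathrm{raw}}-\hat v)/h$ is bounded in $H^1$ and can legitimately be absorbed into $w$, while the mismatch in the $-h^2 x_3\nabla' v$ term is pushed into $\psi$.
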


The following lemma is an easy consequence of Taylor expansion. It is the essential part of lower bound theorem.
\begin{lemma}
  \label{lem:3}
  Let   $\bs G^h,\bs K^h\in L^2(\Omega,\R^{3 \times 3})$ be such that
  \begin{align}
    \label{eq:1}
    &\bs K^h\text{ is skew-symmetric and}\\
    &\limsup\limits_{h\to 0}\left(\|\bs G^h\|_{L^2}+\|\bs K^h\|_{L^4}\right)<\infty,
  \end{align}
  Consider
  \begin{equation*}
    \bs E^h:=\frac{\sqrt{(\id+h\bs K^h+h^2\bs G^h)^t(\id+h\bs K^h+h^2\bs G^h)}-\id}{h^2}.
  \end{equation*}
 Then there exists a sequence $(\chi^h)_{h>0}$ such that, $\chi^h:\Omega \to \{0,1\}$,
  $\chi^h \to 1$ boundedly in measure and
  \begin{equation*}
    \lim\limits_{h\to 0}\left\|\chi^h\left(
    E^h-\left(\sym\bs G^h-\frac{1}{2}(\bs K^h)^2\right) \right) \right\|_{L^2}=0.
  \end{equation*}
\end{lemma}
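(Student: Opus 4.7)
The plan is to carry out a direct Taylor expansion of both the Cauchy--Green tensor and the matrix square root, after restricting to a "good set" on which the relevant quantities are uniformly small. The skew-symmetry of $K^h$ will make the first-order-in-$h$ terms cancel, leaving $\sym G^h - \tfrac12(K^h)^2$ as the leading coefficient.

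First I would set $F^h := I + hK^h + h^2 G^h$ and compute, using $(K^h)^t = -K^h$ and hence $(K^h)^t K^h = -(K^h)^2$,
\begin{equation*}
(F^h)^t F^h = I + h^2 A^h, \qquad A^h := 2\sym G^h - (K^h)^2 + h\bigl((G^h)^t K^h - K^h G^h\bigr) + h^2 (G^h)^t G^h.
\end{equation*}
So the claim reduces to showing $\tfrac{1}{h^2}\bigl(\sqrt{I+h^2 A^h} - I\bigr) - \tfrac12\bigl(2\sym G^h - (K^h)^2\bigr) \to 0$ in $L^2$ after multiplication by a suitable $\chi^h$.

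Next I would introduce the truncation $\chi^h := \mathbf 1_{\{|G^h| \leq h^{-1}\}} \cdot \mathbf 1_{\{|K^h| \leq h^{-1/2}\}}$. By Chebyshev's inequality applied to the $L^2$ bound on $G^h$ and the $L^4$ bound on $K^h$, the complement has measure $O(h^2)$, so $\chi^h\to 1$ boundedly in measure. On $\{\chi^h=1\}$ we have $h|K^h|\leq h^{1/2}$ and $h^2|G^h|\leq h$, which yields the uniform bound $h^2|A^h|\leq C(h^{1/2}+h)\to 0$. This places $I+h^2 A^h$ inside the radius of convergence of the square-root series, giving the pointwise estimate
\begin{equation*}
\Bigl|\tfrac{1}{h^2}\bigl(\sqrt{I+h^2 A^h}-I\bigr) - \tfrac{1}{2} A^h\Bigr| \leq C\,h^2 |A^h|^2 \quad \text{on } \{\chi^h=1\}.
\end{equation*}

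The remaining work is to control, in $L^2$, the three error contributions: the cross-term $\tfrac{h}{2}((G^h)^t K^h - K^h G^h)$, the quadratic term $\tfrac{h^2}{2}(G^h)^t G^h$, and the Taylor remainder $h^2|A^h|^2$. Using the truncation as an $L^\infty$ bound I would estimate $\|\chi^h h (G^h)^t K^h\|_{L^2}\leq h^{1/2}\|G^h\|_{L^2}$ and $\|\chi^h h^2 (G^h)^t G^h\|_{L^2}\leq h\|G^h\|_{L^2}$. For the remainder I would expand $|A^h|^4$, trade high powers for lower-order integrability on the truncated set (for instance $|G^h|^4 \leq h^{-2}|G^h|^2$ and $|K^h|^8 \leq h^{-2}|K^h|^4$), and use the given $L^2$ and $L^4$ bounds to conclude $h^4\int_{\{\chi^h=1\}} |A^h|^4 \to 0$.

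The main obstacle is calibrating the truncation thresholds, since they must simultaneously (i) make $\chi^h\to 1$ boundedly in measure, (ii) render $h^2|A^h|$ uniformly small so the matrix square-root expansion is legitimate, and (iii) leave the $L^2$ norms of the lower-order and remainder terms convergent to zero. The choice $|G^h|\leq h^{-1}$, $|K^h|\leq h^{-1/2}$ is forced by balancing these three constraints against the available $L^2$ and $L^4$ a priori bounds; any other choice is either too generous (fails (ii) or (iii)) or too restrictive (fails (i)).
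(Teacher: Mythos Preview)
Your proof is correct and follows essentially the same route as the paper: the paper defines the identical truncation set $S^h=\{|K^h|\leq h^{-1/2},\ |G^h|\leq h^{-1}\}$ and invokes a Taylor expansion of $\sqrt{(I+A)^t(I+A)}$ about $A=0$ with $A=hK^h+h^2G^h$, whereas you first compute $(F^h)^tF^h=I+h^2A^h$ explicitly and then expand $\sqrt{I+h^2A^h}$, which amounts to the same thing. Your write-up is in fact more detailed than the paper's, which leaves the $L^2$ control of the remainder terms implicit.
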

\begin{proof}
Notice that the following claim is the direct consequence of Taylor expansion: There exists $\delta>0$ and a monotone increasing function $\eta:(0,\delta) \to (0,\infty)$ such that $\lim_{r \to 0} \eta(r)=0$ and
\begin{eqnarray} \label{igor100}
\left| \sqrt{(I+A)^t (I+A)}-\left(I+\sym A+\tfrac{1}{2} A^t A\right)\right| &\leq& \eta(|A|)\left(\sym A+\tfrac{1}{2} A^t A\right) \\ \nonumber & & \forall A\in \R^{3 \times 3}, |A|<\delta.
\end{eqnarray}
Now we use the truncation argument. Namely, let $\chi^h$ be the characteristic function of the set $S^h$,
where
$$S^h=\{x \in \Omega: |K^h| \leq \tfrac{1}{\sqrt{h}},\ |G^h| \leq \tfrac{1}{h} \}. $$
The claim follows after putting $A=hK^h+h^2G^h$ into the expression (\ref{igor100}), dividing by $h^2$ and letting $h \to 0$.
\end{proof}

We state one simple linearization lemma, which is already given in \cite{NeuVel-12}.
\begin{lemma}[linearization]
  \label{L:linearization}
  Let $\{\widetilde E^h\}_{h>0}\subset L^2(\Omega,\R^{3\times 3})$
  satisfy
  \begin{equation}\label{eq:19}
    \limsup\limits_{h\to 0}\|\widetilde E^h \|_{L^2}<\infty\qquad\text{ and }\qquad \lim\limits_{h\to 0}h^2\|\widetilde E^h\|_{L^\infty}=0.
  \end{equation}
  Then
  \begin{equation*}
  \lim\limits_{h\to 0} \left| \frac{1}{h^4}\int_\Omega
      W^h(x,\id+h^2\widetilde E^{h}(x))\,dx-\int_\Omega Q^h(x,\widetilde E^{h}(x))\,dx\right|=0.
  \end{equation*}
\end{lemma}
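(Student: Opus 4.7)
The idea is to exploit the uniform quadratic expansion built into Definition~\ref{def:composite}(iii) directly at each point $x\in\Omega$, with $\boldsymbol G=h^2\widetilde E^h(x)$. Because $Q^h(x,\cdot)$ is quadratic we have $Q^h(x,h^2\widetilde E^h)=h^4 Q^h(x,\widetilde E^h)$, so dividing the expansion by $h^4$ produces exactly the integrand we want to compare with $Q^h(x,\widetilde E^h)$.

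More concretely, the plan is as follows. First, applying \eqref{eq:94} pointwise with $\boldsymbol G=h^2\widetilde E^h(x)$ yields
\begin{equation*}
 \bigl|W^h\bigl(x,\id+h^2\widetilde E^h(x)\bigr)-h^4 Q^h(x,\widetilde E^h(x))\bigr|
 \leq h^4|\widetilde E^h(x)|^2\, r\bigl(h^2|\widetilde E^h(x)|\bigr)
\end{equation*}
for almost every $x\in\Omega$ and every $h>0$. Dividing by $h^4$ and integrating over $\Omega$ gives
\begin{equation*}
 \left|\frac{1}{h^4}\int_\Omega W^h(x,\id+h^2\widetilde E^h)\,dx-\int_\Omega Q^h(x,\widetilde E^h)\,dx\right|
 \leq \int_\Omega |\widetilde E^h|^2\, r\bigl(h^2|\widetilde E^h|\bigr)\,dx.
\end{equation*}
The second step is to exploit the assumption $h^2\|\widetilde E^h\|_{L^\infty}\to 0$. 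Since $r$ is monotone with $r(\delta)\to 0$ as $\delta\to 0$, monotonicity implies
\begin{equation*}
 r\bigl(h^2|\widetilde E^h(x)|\bigr)\leq r\bigl(h^2\|\widetilde E^h\|_{L^\infty}\bigr)\quad\text{for a.e. }x\in\Omega,
\end{equation*}
and the right-hand side tends to $0$ as $h\to 0$. Combining this with the uniform $L^2$ bound $\limsup_{h\to 0}\|\widetilde E^h\|_{L^2}<\infty$ gives
\begin{equation*}
 \int_\Omega |\widetilde E^h|^2\, r\bigl(h^2|\widetilde E^h|\bigr)\,dx
 \leq r\bigl(h^2\|\widetilde E^h\|_{L^\infty}\bigr)\,\|\widetilde E^h\|_{L^2}^2\longrightarrow 0,
\end{equation*}
which finishes the proof.

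There is essentially no hard step here: the content of the lemma is entirely packaged in hypothesis \eqref{eq:94} of the admissible composite class, which upgrades the pointwise Taylor expansion \eqref{ass:expansion} to a uniform-in-$h$ expansion with modulus $|\boldsymbol G|^2 r(|\boldsymbol G|)$. The only mild subtlety to be careful about is that the modulus $r$ must be monotone (as stated in Definition~\ref{def:composite}(iii)) in order to replace $r(h^2|\widetilde E^h(x)|)$ by the deterministic quantity $r(h^2\|\widetilde E^h\|_{L^\infty})$ uniformly in $x$; this is what allows the pointwise smallness of the error (controlled by the $L^\infty$-assumption) to be decoupled from its $L^2$-magnitude (controlled by the uniform $L^2$-bound).
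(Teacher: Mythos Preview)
Your proof is correct and follows essentially the same approach as the paper: apply \eqref{eq:94} pointwise with $\boldsymbol G=h^2\widetilde E^h(x)$, divide by $h^4$, use monotonicity of $r$ to pull out $r(h^2\|\widetilde E^h\|_{L^\infty})$, and conclude from the $L^2$ bound and $r(\delta)\to 0$. The paper's argument is line-for-line the same, just without the surrounding commentary.
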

\begin{proof}
  We have
  \begin{eqnarray*}
    \limsup_{h \to 0} \lefteqn{\left| \frac{1}{h^4}\int_\Omega
      W^h(x,\id+h^2\widetilde  E^{h}(x))\,dx-\int_\Omega
      Q^h(x,\widetilde E^{h}(x))\,dx\right|}&&\\
  &\leq& \limsup_{h \to 0}
    \frac{1}{h^4}\int_\Omega\left|W^h(x,\id+h^2\widetilde E^{h}(x))- h^4Q^h(x,\widetilde E^{h}(x))\right|\,dx\\
    &\stackrel{\eqref{eq:94}}{\leq}& \limsup_{h \to 0}
    \frac{1}{h^4}\int_\Omega|h^2\widetilde E^{h}(x)|^2\,r(|h^2\widetilde E^{h}(x)|)\,dx\\
    &\leq& \limsup_{h \to 0} r(h^2\|\widetilde E^{h}\|_{L^\infty})\int_\Omega|\widetilde E^{h}(x)|^2\,dx,
  \end{eqnarray*}
  where in the last line we used that $r(\cdot)$ is monotonically
  increasing. By appealing to \eqref{eq:19} and $\lim_{\delta\to
    0}r(\delta)=0$, we get
  $ \lim_{h\to0}r(h^2\|\widetilde E^h\|_{L^\infty})\int_\Omega|\widetilde E^{h}(x)|^2\,dx=0 $
  and the proof is complete.
\end{proof}

\begin{proof}[Proof of Theorem \ref{thm:1}]
The key fact is to obtain from Proposition \ref{P1} the representation of the strain in the form
\begin{equation}
 E^{h_n}=\sym \nabla u-\frac{1}{2} \nabla v \otimes \nabla v-x_3 \nabla^2 v+\nabla'^2\varphi^{h_n}+\nabla_{h_n}\tilde{\psi}^{h_n}+o^{h^n},
 \end{equation}
on a large set that vanishes as $n\to\infty$. Here
\begin{eqnarray*}
& & \lim_{n \to \infty} \|o^{h_n}\|_{L^2}=\lim_{n \to \infty}\|\tilde{\psi}^{h_n}\|_{L^2}=\lim_{n\to \infty}\| \varphi^{h_n}\|_{H^1}=0, \\
& & \limsup_{n \to \infty}\|\varphi^{h_n}\|_{H^2} < \infty, \quad \limsup_{n \to \infty} \|\nabla_{h_n} \tilde{\psi}^{h_n} \|_{L^2}<\infty.
\end{eqnarray*}
This is established by the  relations (\ref{eq:elias1}), (\ref{eq:elias2}), (\ref{konacno1}) and (\ref{konacno2}). To make the lower bound we have to modify the sequences  $(|\nabla'^2\varphi^{h_n}|^2)_{n\in\N}$ and $(|\nabla_{h_n}\tilde{\psi}^{h_n}|^2)_{n\in\N}$ by equi-integrable ones.

For the proof without loss of generality we can take $R=I$.
First we assume that $\omega$ is of class $C^2$.
Without loss of generality we assume that
\begin{equation}\label{eq:22}
  \liminf_{n\to \infty}I^{h_n}(\ya^{h_n})=\limsup_{n\to
    \infty}I^{h_n}(\ya^{h_n})<\infty.
\end{equation}
Due to the non-degeneracy of $W^{h_n}$ (see
\eqref{ass:non-degenerate}) we have
$\limsup_{n\to\infty}e^{h_n}(\ya^{h_n})<\infty$. Hence, Proposition~\ref{P1}
is applicable, and we easily deduce the part (i), by taking in the expression (\ref{P1.0}) the integral over the interval $I$.
From the estimate (\ref{P1.1}) and (\ref{P1.2}) we conclude that
$$ v^{h_n} \rightharpoonup v \text{ weakly in } H^2,u^{h_n} \rightharpoonup u \text{ weakly in } H^1. $$
Notice that from the expression (\ref{P1.0}) we have that
$$ \nabla_{h_n} y^{h_n}= I+{h_n}K^{h_n}+h_n^2 G^{h_n},$$
where
\begin{eqnarray*}
K^{h_n}&=&\left(\begin{array}{ccc} 0 &0 & -\partial_1 v^{h_n} \\ 0 & 0 &-\partial_2 v^{h_n} \\ \partial_1 v^{h_n} & \partial_2 v^h & 0  \end{array} \right), \\
G^{h_n} &=& \iota (-x_3 \nabla'^2 v^{h_n}+\nabla' u^{h_n})+\nabla_{h_n} \psi^{h_n}+\left(\begin{array}{ccc} 0 & 0 &0 \\ 0& 0 &0 \\ \partial_1 w^{h_n} & \partial_2 w^{h_n} & 0 \end{array} \right).
\end{eqnarray*}
Notice that $K^{h_n}$, $G^{h_n}$ satisfy the hypothesis of Lemma $\ref{lem:3}$.
Define also $$E^{h_n}=\tfrac{\sqrt{(\nabla_{h_n} y^{h_n})^t \nabla_{h_n} y^{h_n}}-I}{h_n^2}.$$
From the inequality valid for any $F \in \R^{3 \times 3}$, $|\sqrt{F^tF}-I| \leq \dist(F,\SO 3)$, where the equality holds for $F \in \R^{3 \times 3}$ such that $\det F>0$, we conclude
that $\limsup_{n \to \infty} \|E^{h_n}\|_{L^2} < \infty$.
We truncate the peaks of
$E^h$ and the set of points where $\det\nabla_{h_n}\ya^{h_n}$ is
negative. Therefore, consider the good set
$C^{h_n}:=\{\,x\in\Omega\,:\,|E^{h_n}(x)|\leq {h_n}^{-1},\,\det
\nabla_{h_n}\ya^{h_n}(x)>0\,\}$ and let $\chi^{h_n}_1$ denote the indicator
function associated with $C^{h_n}$. It is easy to see that $\chi^{h_n}_1 \to 1$ boundedly in measure.
By applying Lemma \ref{lem:3} we know that there exists a sequence $(\chi_2^{h_n})_{n \in\N}$ such that for all $n$, $\chi_2^{h_n}:\Omega \to \{0,1\}$,
  $\chi_2^{h_n} \to 1$ boundedly in measure and
  \begin{equation} \label{eq:elias1}
    \lim\limits_{n\to \infty}\left\|\chi_2^{h_n}\left(
    E^{h_n}-\left(\sym\bs G^{h_n}-\frac{1}{2}(\bs K^{h_n})^2\right) \right) \right\|_{L^2}=0.
  \end{equation}

In the same
way as in Proposition \ref{igor2} we take $(\tilde{w}^{h_n})_{n \in\N}$ such that
\begin{equation} \label{igor1222}
 \lim_{n \to \infty} \|w^{h_n}-\tilde{w}^{h_n}\|_{L^2}=0,\ \limsup_{n \to \infty} \|\tilde{w}^{h_n}\|_{H^1}\leq C(\omega)\limsup_{n \to \infty} \|w^{h_n}\|_{H^1} ,\ \lim_{n \to \infty} {h_n} \|\tilde{w}^{h_n}\|_{H^2}=0.
\end{equation}
Also we take the sequence $(\tilde{v}^{h_n})_{n \in \N} \subset C^2(\omega)$ such that
\begin{equation} \label{igor1223}
\|\tilde{v}^{h_n}-v\|_{H^2}\to 0, \quad {h_n}\| \tilde{v}^{h_n}\|_{C^2} \to 0.
\end{equation}
This can be done by taking a smooth sequence converging to $v$ and reparametrizing it.
Notice that
\begin{eqnarray}
\label{eq:elias2}& &\\  \nonumber\sym G^h -\tfrac{1}{2} (K^{h_n})^2 &=& \iota\left( M_1+x_3 M_2 \right)-x_3\iota \left( \nabla'^2 (v^{h_n}-v) \right)\\ \nonumber & &+\sym\nabla_{h_n} \tilde{\psi}^{h_n}+o^{h_n},
\end{eqnarray}
where
\begin{eqnarray*}
M_1 &=& \sym \nabla' u -\tfrac{1}{2} \nabla' v \otimes \nabla' v,\\
M_2 &=& -\nabla'^2 v, \\
\tilde{\psi}^{h_n} &=& \psi^{h_n}+\left(\begin{array}{c} u_1^{h_n}-u_1 \\ u_2^{h_n}-u_2 \\ w^{h_n}-\tilde{w}^{h_n} \end{array} \right)+
hx_3 \left(\begin{array}{c} \partial_1 \tilde{w}^{h_n} \\ \partial_2 \tilde{w}^{h_n}\\  -\tfrac{1}{2}\left(|\partial_1 \tilde{v}^{h_n}|^2+|\partial_2 \tilde{v}^{h_n}|^2\right)  \end{array}\right),\\
o^{h_n} &=& -\tfrac{1}{2} \iota\left( \nabla' v^{h_n} \otimes \nabla' v^{h_n}- \nabla' v \otimes \nabla' v \right)-h_nx_3\iota(\nabla'^2 \tilde{w}^{h_n})\\ & &+\tfrac{1}{2} \sym \left(\begin{array}{cc}  0 & 0 \\ 0 & 0 \\ h_nx_3 \nabla' \left(|\partial_1 \tilde{v}^{h_n}|^2+|\partial_2 \tilde{v}^{h_n}|^2  \right) & |\partial_1 \tilde{v}^{h_n}|^2+|\partial_2 \tilde{v}^{h_n}|^2-|\partial_1 v^{h_n}|^2-|\partial_2 v^{h_n}|^2 \end{array} \right).
\end{eqnarray*}
Notice that from (\ref{P1.1}), (\ref{P1.2}) as well as (\ref{igor1222}) and (\ref{igor1223}) and Sobolev embedding we conclude that
\begin{eqnarray}
& & \label{konacno1}\lim_{n \to \infty} \|o^{h_n}\|_{L^2}=\lim_{n \to \infty}\|\tilde{\psi}^{h_n}\|_{L^2}=\lim_{n\to \infty}\| v^{h_n}-v\|_{H^1}=0, \\
& & \label{konacno2} \limsup_{n \to \infty}\|\tilde{v}^{h_n}-v^{h_n}\|_{H^2} < \infty, \quad \limsup_{n \to \infty} \|\nabla_{h_n} \tilde{\psi}^{h_n} \|_{L^2}<\infty.
\end{eqnarray}
By using Proposition \ref{ekvi1} and Theorem \ref{ekvi2} we find a subsequence $(h_{n(k)})$ and sequences
$(\varphi_k)_{k \in \N} \subset H^2(\omega)$, $(\tilde{\tilde{\psi}}_k)_{k \in \N} \subset H^1(\Omega,\R^3)$, $(\chi_{3,k})_{k \in \N}$ such that
\begin{eqnarray}
& &\label{svojstvo11} \lim_{k \to \infty} \| \varphi_k\|_{H^1}= \lim_{k \to \infty} \|\tilde{\tilde{\psi}}_k\|_{L^2}=0, \\
& & \label{svojstvo22} \left( |\nabla'^2 \varphi_k|^2\right)_{k \in \N},\ \left( |\nabla_{h_{n(k)}} \tilde{\tilde{\psi}}_k|^2 \right)_{k \in \N} \text{ are equi-integrable}, \\
& & \label{svojstvo33} \chi_{3,k}:\Omega \to \{0,1\}, \forall k, \ \chi_{3,k}\to 1 \text{ boundedly in measure},\\
& & \label{svojstvo44}
\{ x=(x',x_3) \in \Omega: \varphi_k(x') \neq (v^{h_{n(k)}}-v)(x')  \text{ or } \tilde{\tilde{\psi}}_k(x) \neq \tilde{\psi}^{h_{n(k)}}(x)  \}=\{\chi_{3,k}=0\}.
\end{eqnarray}
Define
$$\vartheta_k=\tilde{\tilde{\psi}}_{k}+\left(\begin{array}{c} 0\\ 0\\ \tfrac{\varphi_k}{h_{n(k)}} \end{array} \right)-x_3 \left(\begin{array}{c} \partial_1 \varphi_k \\ \partial_2 \varphi_k \\ 0 \end{array} \right),$$
and notice that
$$ \sym \nabla_{h(n(k))} \vartheta_k= -x_3 \iota(\nabla'^2 \varphi_k)+\sym \nabla_{h(n(k))}\tilde{\tilde{\psi}}_k. $$
From this and (\ref{svojstvo22}) and (\ref{svojstvo44}) we conclude that
the family $(|\sym \nabla_{h(n(k))} \vartheta_k|^2)_{k \in \N}$ is equi-integrable and
 \begin{equation} \label{elias3}
 \{x \in \Omega: \ \sym \nabla_{h_{n(k)}} \vartheta_k \neq -x_3 \iota(\nabla'^2(v^{h_{n(k)}}-v))+\nabla'_{h_{n(k)}} \tilde \psi^{h_{n(k)}} \}=\{\chi_{3,k}=0\},
 \end{equation}
 up to a set of measure zero (see Remark \ref{rem:jednakost}).
 From (\ref{svojstvo11}) we conclude that $(\vartheta_{k,1},\vartheta_{k,2},h_{n(k)}\vartheta_{k,3})$ $\to 0$ strongly in $L^2$.
Now we are ready to prove the lower bound. The key idea is that the replacement by equi-integrable family enables us to establish the lower bound on the whole set.
Denote by
$$\chi_k=\chi_1^{h_{n(k)}} \chi_2^{h_{n(k)}} \chi_{3,k},\quad \widetilde E_k=\chi_k E^{h_{n(k)}}.$$
By appealing to the polar
factorization for matrices with non-negative determinant, there exists
a matrix field $\Ra^{h_n}:C^{h_n}\to\SO 3$ such that
\begin{equation*}
  \forall x\in C^{h_n}\,:\, \nabla_{h_n}\ya^{h_n}(x)=\Ra^{h_n}(x)\sqrt{(\nabla_{h_n}\ya^{h_n}(x))^t\nabla_{h_n}\ya^{h_n}(x)}.
\end{equation*}
Hence, by frame-indifference (see \eqref{ass:frame-indifference}),
non-negativity (see \eqref{ass:non-degenerate}) and
assumption \eqref{ass:stressfree} we
have
\begin{eqnarray*}
  W^{h_{n(k)}}(x,\nabla_{h_{n(k)}}\ya^{h_{n(k)}}(x))&\geq& \chi_{k}(x) W(x,\nabla_h\ya^h(x))\\ &=& W^{h_{n(k)}}(x,\id+h_{n(k)}^2 \widetilde E_k(x)).
\end{eqnarray*}
Thus,
\begin{eqnarray*}
  I^{h_{n(k)}}(\ya^{h_{n(k)}})&=&\frac{1}{h_{n(k)}^4}\int_\Omega W^{h_{n(k)}}(x,\nabla_{h_{n(k)}}\ya^{h_{n(k)}}(x))\,dx\\
  &\geq&\frac{1}{h_{n(k)}^4}\int_{\Omega} W^{h_{n(k)}}(x,\id+h_{n(k)}^2\widetilde E_k(x)\,\big)\,dx.
\end{eqnarray*}
Due to the truncation we have
$ \lim_{ k \to \infty }{h_{n(k)}}^2 \|\widetilde{E}_k\|_{L^\infty}=0$. Hence, using (\ref{ocjenakv1}),
Lemma~\ref{L:linearization} and the equi-integrability of   $(|\sym \nabla_{h(n(k))} \vartheta_k|^2)_{k \in \N}$
with (Q1) as well as (\ref{eq:22}), (\ref{eq:elias1}), (\ref{eq:elias2}), (\ref{konacno1}), (\ref{elias3})
and Proposition~\ref{identi} we get
\begin{eqnarray*}
  \liminf\limits_{n\to \infty}I^{{h_n}}(\ya^{h_n})&=& \liminf\limits_{k \to \infty }I^{h_{n(k)}}(\ya^{h_{n(k)}})\\
  &\geq&\liminf\limits_{k \to
    \infty}\int_\Omega Q^{h_{n(k)}}(x, \widetilde E_k(x))\,dx\\
  &=&\liminf_{k \to \infty} \int_{\Omega}\chi_k Q^{h_{n(k)}} \left(x,\sym G^{h_{n(k)}}-\tfrac{1}{2} (K^{h_{n(k)}})^2 \right)\,dx\\
  &=&\liminf_{k \to \infty} \int_{\Omega} \chi_k Q^{h_{n(k)}}\left(x,\iota(M_1+x_3M_2) +\nabla_{h_{n(k)}} \vartheta_k   \right)\,dx\\
  &=& \liminf_{k \to \infty} \int_{\Omega}  Q^{h_{n(k)}}\left(x,\iota(M_1+x_3M_2) +\nabla_{h_{n(k)}} \vartheta_k   \right)\,dx\\
  &\geq& \int_{\omega} Q(x',M_1,M_2)\, dx'=I^0(u,v).
\end{eqnarray*}
To deal with arbitrary $\omega$ Lipschitz one firstly takes $D\ll \omega$ of class $C^2$ and conclude that $u \in H^1(\omega,\R^2)$, $v \in H^1(\omega) \cap H^2(D)$. In the same way as above we conclude
\begin{equation} \label{sranje1}
 \liminf\limits_{n\to \infty}I^{h_n}(\ya^{h_n}) \geq \int_{D} Q(x',M_1,M_2)\, dx'\geq \tfrac{\alpha}{12} \|\nabla'^2 v\|_{L^2(D)},
 \end{equation}
where we have used (Q'1). Since the left hand side does not depend on $D$ we conclude that $v \in H^2(\omega)$. By exhausting $\omega$ with $D \ll \omega$ of regularity $C^2$ we have the claim.
\end{proof}

\subsection{Proof of Theorem \ref{thm:up1}}
\begin{proof}
The main point here is that we want to add the relaxation field $ \nabla_{h_{n}} \vartheta_{n}$, that is given by the expression (\ref{defKKK}), to standard von K\'arm\'an type expansion.  To make the procedure formally correct we would like to have that this relaxation field is bounded in $L^{\infty}$. But we only can guarantee the equi-integrability of
$ (|\sym \nabla_{h_{n}} \vartheta_{n}|^2)_{n\in\N}$. We show, using the results from Appendix, that we can replace these fields by bounded ones and then by diagonalization procedure approach asymptotic formulae.

Without any loss of generality we can assume that $\bar{R}=I$.
First we assume that $u \in C^1(\bar{\omega},\R^2)$, $v \in C^2(\bar{\omega})$. The general claim will follow by density argument and by diagonalization, which is standard in $\Gamma$-convergence.
Denote by $M_1=\sym \nabla' u-\tfrac{1}{2} \nabla' v \otimes \nabla' v$, $M_2=-\nabla'^2 v$.
By using Lemma \ref{lem:improveglavnazamjena} we find a subsequence, still denoted by $(h_n)_{n\in \N}$
and $(\varphi_{n})_{n\in \N} \subset H^2 (\omega)$ and $(\psi_{n})_{n\in N} \subset H^1 (\Omega,\R^3)$ such that such that $\varphi_{n} \to 0$ strongly in $H^1$, $\psi_{n} \to 0$ strongly in $L^2$.
Moreover, the following is valid
\begin{enumerate}[(a)]
\item  $\left(|\nabla'^2 \varphi_{n}|^2\right)_{n\in\N}$ and $\left(| \nabla_{h_n}\psi_{n} |^2\right)_{n \in \N}$ are equi-integrable,
\item $$\limsup_{n \to \infty} \left( \| \varphi_{n} \|_{H^2}+\|\nabla_{h_n}\psi_{n}\|_{L^2}\right)\leq C \left( \beta \|M_1+x_3 M_2\|^2_{L^2}+1 \right).$$
\item For $(\vartheta_{n})_{n \in \N} \subset H^1(\Omega;\R^3)$ defined by
$$ \vartheta_{n}= \psi_{n}+\left(\begin{array}{c} 0\\ 0\\ \tfrac{\varphi_{n}}{h_n} \end{array} \right)-x_3 \left(\begin{array}{c} \partial_1 \varphi_{n} \\ \partial_2 \varphi_{n} \\ 0 \end{array} \right),$$
we have
\begin{equation} \label{defKKK}
\int_{\omega} Q(x',M_1,M_2)\,dx'=\lim_{n\to \infty} \int_{\Omega} Q^{h_{n}}\left(x,\iota(M_1+x_3 M_2)
+\nabla_{h_{n}} \vartheta_{n}\right)\, dx.
\end{equation}

\end{enumerate}

We  know that $(\vartheta_{n,1},\vartheta_{n,2},h_{n} \vartheta_{n,3}) \to 0$ strongly in $L^2$
and that
 \begin{equation} \label{defsim}
  \sym \nabla_{h_{n}} \vartheta_{n}=-x_3\iota (\nabla'^2) \varphi_{n}+\sym \nabla_{h_{n}}\psi_{n},
  \end{equation}
is  $L^2$ equi-integrable.
In the same way as in the proof of Lemma \ref{lem:improveglavnazamjena} we can suppose that $\varphi_n=\nabla' \varphi_n=0$ in a neighborhood of $\partial \omega$ and that $\psi_n=0$ in a neighborhood of $\partial \omega \times I$. We extend $\varphi_n$, $\psi_n$ by zero on $\tilde{\omega}$, where $\tilde{\omega}$ has $C^{1,1}$ boundary and $\omega \subset \tilde{\omega}$.
By using Corollary \ref{kor:ekvi00} and Corollary \ref{nemoguce2} we find for each $\lambda>0$ and $n\in \N$, $\varphi^\lambda_{n} \in H^2 \left(\omega \right)$ and $\psi_{n}^\lambda \in H^1\left(\Omega,\R^3 \right)$ such that
\begin{eqnarray}
 \label{pod6}\sup_{n \in \N} \|\varphi^{\lambda}_{n}\|_{W^{2,\infty}} & \leq & C(\tilde{\omega}) \lambda, \\
\label{pod8} \lim_{\lambda \to \infty} \sup_{n \in \N} \|\varphi^{\lambda}_{n}-\varphi_{n}\|_{H^2}&=&0, \\
\label{pod7}\sup_{\lambda>0} \limsup_{n \to \infty}\|\varphi^{\lambda}_{n}\|_{H^2} & \leq & C(\tilde{\omega})\left( \beta \|M_1+x_3 M_2\|^2_{L^2}+1 \right).
\end{eqnarray}
and
\begin{eqnarray}
\label{pod2}\sup_{n \in \N}(\|\psi^{\lambda}_{n}\|_{L^\infty}+\|\nabla_{h_n} \psi^{\lambda}_{n}\|_{L^\infty}) & \leq & C(\tilde{\omega}) \lambda, \\
\label{pod3}& &\\\nonumber \lim_{\lambda \to \infty} \sup_{n \in \N} \left(\|\psi^{\lambda}_{n}-\psi_{n}\|_{L^2}+\|\nabla_{h_n} \psi^{\lambda}_{n}-\nabla_{h_n} \psi_{n}\|_{L^2}\right)&=&0, \\ \sup_{\lambda>0} \limsup_{n \to \infty}\left(\|\psi^{\lambda}_n\|_{L^2}+\|\nabla_{h_n} \psi^{\lambda}_{n}\|_{L^2}\right) & \leq & \\ \nonumber& &\hspace{-15ex}  C(\tilde{\omega})\left( \beta \|M_1+x_3 M_2\|^2_{L^2}+1 \right).
 \end{eqnarray}
Notice that as the consequence of (\ref{pod8}) and (\ref{pod3})  we have
\begin{equation}\label{teziu0}
\lim_{\lambda \to \infty} \limsup_{n \to \infty}\left( \|\varphi_{n}^\lambda\|_{H^1}+\|\psi^\lambda_{n}\|_{L^2}\right)=0.
\end{equation}
Define
$$ \vartheta_{n}^{\lambda}= \psi^{\lambda}_{n}+\left(\begin{array}{c} 0\\ 0\\ \tfrac{\varphi^{\lambda}_{n}}{h_n} \end{array} \right)-x_3 \left(\begin{array}{c} \partial_1 \varphi^{\lambda}_{n} \\ \partial_2 \varphi^{\lambda}_{n} \\ 0 \end{array} \right).$$
Again we have
 \begin{equation} \label{defsim1}
  \sym \nabla_{h_{n}} \vartheta^{\lambda}_{n}=-x_3\iota( \nabla'^2 \varphi^{\lambda}_{n})+\sym \nabla_{h_{n}}\psi^{\lambda}_{n}.
  \end{equation}
 Notice that due to (\ref{pod8}), (\ref{pod3}) we have
 \begin{equation}
\label{razlikasim} \lim_{\lambda \to \infty} \sup_{n\in \N}\|\sym \nabla_{h_{n}} \vartheta^{\lambda}_{n}-\sym \nabla_{h_{n}} \vartheta_{n}\|_{L^2}=0.
 \end{equation}
Define also for every $n,\lambda$ the function $y^{\lambda}_{n}:\Omega \to \R^3$ by
\begin{eqnarray*}
 y^{\lambda}_{n} (x',x_3)&=&\left(\begin{array}{c} x' \\ h_n x_3 \end{array} \right)+\left(\begin{array}{c}h_n^2 u(x')\\ h_n \left (v(x')+\varphi^{\lambda}_{n}(x') \right) \end{array} \right)-h_n^2 x_3 \left(\begin{array}{c} -\partial_1 \left(v+\varphi^{\lambda}_{n}\right)(x') \\ -\partial_2 \left(v+ \varphi^{\lambda}_{n} \right)(x') \\ 0 \end{array}\right) \\& &+h_n^2 \psi^\lambda_{n}(x',x_3) +\tfrac{1}{2}h_n^3 x_3\left(\begin{array}{c}0 \\0\\ |\partial_1(v+\varphi^{\lambda}_{n})(x')|^2+|\partial_2(v+\varphi^{\lambda}_{n})(x')|^2 \end{array} \right).
\end{eqnarray*}
From (\ref{teziu0}) we have:
\begin{equation} \label{prviuvjet}
\lim_{\lambda\to \infty} \limsup_{n\to \infty} \left(\left\|  \tfrac{\int_I y'^{\lambda}_{n}-x'}{h_n^2}-u\right\|_{L^2}+\left\|  \tfrac{\int_I y ^{\lambda}_{n,3}}{h_n}-v\right\|_{L^2}\right)=0,
\end{equation}
where $y'^{\lambda}_{n}=(y^{\lambda}_{n,1}\, ,\, y^{\lambda}_{n,2})$.
Also we easily conclude, by the Taylor expansion, that for every $\lambda>0$
\begin{equation}\label{strain1}
\lim_{n\to \infty}\|\tfrac{\sqrt{(\nabla_{h_n} y^{\lambda}_{n})^t\nabla_{h_n} y^{\lambda}_{n}}-I}{h_n^2}-E^\lambda_{n}\|_{L^\infty}=0,
\end{equation}
where
\begin{equation}
E^\lambda_{n}=\iota\left(\sym \nabla' u-\tfrac{1}{2} \nabla'(v+ \varphi^{\lambda}_{n}) \otimes\nabla'(v+ \varphi^{\lambda}_{n})-x_3\nabla'^2 v \right)+\sym \nabla_{h_n}\vartheta^{\lambda}_{n}.
\end{equation}
From property (W1), Lemma \ref{L:linearization} and (\ref{ocjenakv1}) we conclude that for every $\lambda>0$ we have
\begin{equation} \label{energija1}
\lim\limits_{n\to \infty} \left| \frac{1}{h_n^4}\int_\Omega
      W^{h_n}(x,\nabla_{h_n} y^{\lambda}_{n})\,dx-\int_\Omega Q^{h_n}(x, E^{\lambda}_{n}(x))\,dx\right|=0.
\end{equation}
Notice also that as a consequence of  (\ref{pod8}), (\ref{pod7}), (\ref{teziu0}), (\ref{razlikasim}) and the interpolation we have
\begin{equation} \label{strain2}
\lim_{\lambda \to \infty} \limsup_{n \to \infty}\|E^\lambda_{n}-E_{n}\|_{L^2}=0,
\end{equation}
where
\begin{equation} \label{strain3}
E_{n}=\iota\left(\sym \nabla' u-\tfrac{1}{2} \nabla' v\otimes \nabla' v-x_3\nabla'^2 v \right)+\sym \nabla_{h_n}\vartheta_{n}.
\end{equation}
From (Q1), (\ref{ocjenakv1}) and (\ref{defKKK}) we have
\begin{eqnarray} \label{energija2}
& &\lim_{\lambda \to \infty}\limsup\limits_{n\to \infty} \left|\int_\Omega Q^{h_n}(x, E^{\lambda}_{n}(x))\,dx-\int_{\omega} Q(x',M_1,M_2)\, dx'\right| =0.
\end{eqnarray}
By forming the function
\begin{eqnarray*}
 g(\lambda,n)&=&\left\|  \tfrac{\int_I y'^{\lambda}_{n}-x'}{h_n^2}-u\right\|_{L^2(\omega)}+\left\|  \tfrac{\int_I y ^{\lambda}_{n,3}}{h_n}-v\right\|_{L^2(\omega)}\\ & &+ \left| \frac{1}{h^4}\int_\Omega
      W^{h_n}(x,\nabla_{h_n} y^{\lambda}_{n})\,dx-\int_\omega Q(x', M_1,M_2)\,dx'\right|,
\end{eqnarray*}
we conclude from (\ref{prviuvjet}), (\ref{energija1}) and (\ref{energija2}) that
$$  \lim_{\lambda \to \infty} \limsup_{n \to \infty} g(\lambda,n)=0.$$
By performing diagonalizing argument we find monotone function $\lambda(n)$, such that
$\lim_{n \to \infty}$ $g(\lambda(n),n)=0$. This gives the desired sequence. To deal with $u \in H^1(\omega,\R^2)$, $v \in H^2(\omega)$ we need to do the further diagonalization. Namely, first we choose
$u_k \in C^1(\bar{\omega},\R^2)$, $v_k \in C^2(\bar{\omega})$ such that
$$ \lim_{k\to\infty}\|u_k-u\|_{H^1}=0,\ \lim_{k \to \infty} \|v_k-v\|_{H^2}=0.$$
Denote by
$$M_{1,k}=\sym \nabla'u_k-\tfrac{1}{2} \nabla' v_k \otimes \nabla'v_k,\quad M_{2,k}=-\nabla'^2 v_k.$$
We have that $M_{1,k} \to M_1$, $M_{2,k} \to M_2$ strongly in $L^2$.
Then for each $k \in \N$ we choose a sequence  of functions $(y_{k,n})_{n\in\N}\subset H^1(\Omega,\R^3)$ such that
$$\lim_{n\to \infty} \left(\left\|  \tfrac{\int_I y'_{k,n}-x'}{h_n^2}-u_k\right\|_{L^2}+\left\|  \tfrac{\int_I y_{k,n,3}}{h_n}-v_k\right\|_{L^2}\right)=0,$$
and
$$
\lim\limits_{n\to \infty} \left| \frac{1}{h_n^4}\int_\Omega
      W^{h_n}(x,\nabla_{h_n} y_{k,n})\,dx-\int_\omega Q(x', M_{1,k},M_{2,k})\,dx'\right|=0.
$$
From (Q'1) we see that
$$
\lim\limits_{k\to \infty} \left|\int_\omega Q(x', M_{1,k},M_{2,k})\,dx'-\int_\omega Q(x', M_{1},M_{2})\,dx'\right|=0.
$$
Thus for the function formed by
\begin{eqnarray*}
 g(k,n)&=& \left\|  \tfrac{\int_I y'_{k,n}-x'}{h_n^2}-u\right\|_{L^2(\omega)}+\left\|  \tfrac{\int_I y_{k,n,3}}{h_n}-v\right\|_{L^2(\omega)} \\ & &+
 \left|\tfrac{1}{h_n^4}\int_\Omega
      W^{h_n}(x,\nabla_{h_n} y_{k,n})\,dx-\int_\omega Q(x', M_{1},M_{2})\,dx'\right|,
\end{eqnarray*}
we see that $\lim_{k \to \infty} \lim_{n \to \infty} g(k,n)=0$. Then, by diagonalizing, we obtain the sequence $k(n)$ such that $\lim_{n\to \infty} g(k(n),n)=0$.
\end{proof}

\appendix

\section{Auxiliary results}
\begin{proposition} \label{propa1}                                                                                                                                                                                                                                                                                   Let $1 \leq p \leq \infty$, $\lambda>0$. Let $A$ be a bounded open set in $\R^n$ with Lipschitz boundary
\begin{enumerate}
\item Suppose $u \in W^{1,p} (A)$
Then there exists $u^\lambda  \in W^{1,\infty}(A)$ such that
\begin{eqnarray*}
\|u^\lambda\|_{W^{1,\infty}} & \leq & C(n,p,A) \lambda \\
|\{x \in A: u^\lambda(x) \neq u(x) \}| &\leq& \frac{C(n,p,A)}{\lambda^p} \int_{\{ |u|+|\nabla u| \geq
\lambda/C(n,p,A) \}} \big(|u|+|\nabla u|\big)^p \ud x. \end{eqnarray*}
In particular,
$$ \lim_{\lambda \to \infty} \Big( \lambda^p \left|\{ x \in A:\ u^\lambda(x) \neq u(x) \}\right|\Big)=0. $$
If we define Hardy Littlewood maximal function
$$   Ma(x)=\sup_{r>0}\fint_{B(x,r)} a(y) dy, $$
where $a=|\tilde{u}|+|\nabla \tilde{u}|$   ($\tilde{u}$ is the extension of $u$ to $W^{2,2}(\R^n)$ which has
the compact support)
and
$$A^\lambda=\{x \in \R^n: \ Ma(x) < \lambda \textrm{ and $x$ is a Lebesgue point of} \ u, \ \nabla u \textrm{
and }  \nabla^2 u \}, $$
then we can construct $u^\lambda$ such that
$$\{u^\lambda \neq u\}=\tilde{A}^\lambda,$$
where $\tilde{A}^\lambda$ is a closed subset of $A^\lambda \cap A$ which satisfies
$|A \backslash \tilde{A}^\lambda| \leq C|A \backslash A^\lambda|$, for some $C>1$.
\item Assume additionally  that $A$ is has the boundary of class $C^{1,1}$ and $u \in W^{2,p} (A)$.
Then there exists $u^\lambda \in W^{2,\infty} (A)$  such that
\begin{eqnarray*}
\|u^\lambda\|_{W^{2,\infty}} & \leq & C(n,p,A) \lambda, \\
|\{x \in A: u^\lambda(x) \neq u(x) \}| &\leq& \\ & & \hspace{-20ex} \frac{C(n,p,A)}{\lambda^p} \int_{\{
(|u|+|\nabla u|+|\nabla^2 u|) \geq \lambda/C(n,p,A) \}} (|u|+|\nabla u|+|\nabla^2 u|)^p \ud x,
\end{eqnarray*}
where $a=|u|+|\nabla u|+|\nabla^2 u|$.
If we define
$$   Ma(x)=\sup_{r>0}\fint_{B(x,r)} a(y) dy, $$
and
$$A^\lambda=\{x \in A: \ Ma(x) < \lambda \textrm{ and $x$ is a Lebesgue point of} \ u, \ \nabla u \textrm{
and } \  \nabla^2 u   \}, $$
then we can construct $u^\lambda$ such that
$$\{u^\lambda \neq u\}=\tilde{A}^\lambda,$$
where $\tilde{A}^\lambda$ is a closed subset of $A^\lambda$ which satisfies
$|A \backslash \tilde{A}^\lambda| \leq C|A \backslash A^\lambda|$, for some $C>1$.
\end{enumerate}
\end{proposition}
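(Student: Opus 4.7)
The plan is to prove this by the classical Lusin-type approximation for Sobolev functions (in the spirit of Liu and Acerbi--Fusco), with the Hardy--Littlewood maximal function playing the central role. First I would apply a standard Stein-type extension operator (whose existence requires Lipschitz boundary for (1) and $C^{1,1}$ boundary for (2)) to extend $u$ to a compactly supported $\tilde u$ on $\R^n$ with $\|\tilde u\|_{W^{1,p}(\R^n)} \leq C(n,p,A)\|u\|_{W^{1,p}(A)}$, respectively $\|\tilde u\|_{W^{2,p}(\R^n)} \leq C(n,p,A)\|u\|_{W^{2,p}(A)}$. Then I would take the maximal function $Ma$ of $a=|\tilde u|+|\nabla \tilde u|$ (for (1)) or $a=|\tilde u|+|\nabla \tilde u|+|\nabla^2\tilde u|$ (for (2)), and work on the superlevel set $A^{\lambda}$ from the statement.

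The key technical step is the pointwise Lipschitz estimate
\begin{equation*}
|\tilde u(x)-\tilde u(y)| \leq C(n)\,|x-y|\,\bigl(M|\nabla\tilde u|(x)+M|\nabla\tilde u|(y)\bigr)
\end{equation*}
valid at Lebesgue points of $\tilde u$, which follows from a chain of Poincar\'e inequalities on balls of dyadically decreasing radii centered at $x$ and $y$, together with a telescoping argument estimating differences of ball averages by $r\cdot \fint |\nabla \tilde u|$. Consequently, $\tilde u$ restricted to $A^\lambda$ is $C\lambda$-Lipschitz, and I would then apply the McShane/Kirszbraun extension $u^\lambda(x):=\inf_{y\in A^\lambda}\{\tilde u(y)+C\lambda|x-y|\}$ capped by $\|\tilde u\|_{L^\infty(A^\lambda)}\leq C\lambda$ to obtain $u^\lambda\in W^{1,\infty}(\R^n)$ with $\|u^\lambda\|_{W^{1,\infty}}\leq C(n,p,A)\lambda$ and $u^\lambda=\tilde u$ on $A^\lambda$. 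The measure estimate for $\{u\neq u^\lambda\}\subset A\setminus A^\lambda$ then reduces to the weak-type bound $|\{Ma>\lambda\}|\leq \tfrac{C}{\lambda^p}\int_{\{a\geq \lambda/C\}}a^p\,dx$ for the Hardy--Littlewood maximal operator (valid for $p\geq 1$ by the standard Vitali covering/interpolation argument), which yields the stated decay, and in particular $\lambda^p|\{u\neq u^\lambda\}|\to 0$ by absolute continuity of the integral. The closedness claim on $\tilde A^\lambda$ is obtained by choosing a closed subset of $A^\lambda\cap A$ (which is open since $Ma$ is lower semicontinuous) whose complement in $A$ has measure bounded by $C|A\setminus A^\lambda|$; this is elementary inner regularity of Lebesgue measure.

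For part (2), I would apply the same scheme to $\nabla\tilde u$ in place of $\tilde u$: the above pointwise inequality applied to each component of $\nabla \tilde u$, using $M|\nabla^2\tilde u|$ on the right-hand side, shows that $\nabla\tilde u$ is $C\lambda$-Lipschitz on the modified set $A^\lambda$ defined with $a=|\tilde u|+|\nabla\tilde u|+|\nabla^2\tilde u|$. Extending $\nabla\tilde u$ by McShane componentwise and integrating (or, more cleanly, extending $\tilde u$ itself using a Whitney cube decomposition of $\R^n\setminus A^\lambda$ with a smooth partition of unity and affine approximations $\tilde u(x_Q)+\nabla\tilde u(x_Q)\cdot(x-x_Q)$ on each cube) produces $u^\lambda\in W^{2,\infty}(\R^n)$ with $\|u^\lambda\|_{W^{2,\infty}}\leq C(n,p,A)\lambda$. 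The measure estimate again follows from the weak-type maximal inequality applied to $a$. I expect the main obstacle to be the verification that the Whitney/McShane extension in part (2) genuinely delivers a $W^{2,\infty}$ extension with the right constant and the prescribed agreement set $\tilde A^\lambda$, since merely extending $\nabla\tilde u$ as a Lipschitz field does not automatically produce a scalar $u^\lambda$; this requires a careful Whitney-cube construction of the type used in Liu's paper, so that the agreement set remains a closed subset of $A^\lambda\cap A$ with the stated measure bound.
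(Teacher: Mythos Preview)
Your outline is correct and is precisely the Liu/Acerbi--Fusco argument that the paper invokes by citing Proposition~A.2 of \cite{FJM-02}; the paper's own proof is nothing more than that citation plus one remark.

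That remark is the one point where your reasoning is slightly off. You write that the Stein extension ``requires $C^{1,1}$ boundary for (2)'', but Stein's universal extension operator works for $W^{2,p}$ on Lipschitz domains as well. The actual reason the paper imposes $C^{1,1}$ in part~(2) is not the \emph{existence} of an extension but the \emph{quantitative} step you glossed over: the weak-type maximal bound gives
\[
|\{Ma>\lambda\}|\leq \frac{C}{\lambda^p}\int_{\{a\geq \lambda/C\}} a^p\,dx
\]
with $a=|\tilde u|+|\nabla\tilde u|+|\nabla^2\tilde u|$ on all of $\R^n$, whereas the statement requires the right-hand side to be an integral over a superlevel set of $|u|+|\nabla u|+|\nabla^2 u|$ \emph{on $A$}. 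Passing from one to the other needs the extension operator to satisfy a pointwise (or at least superlevel-set) estimate of the type $|E u|+|\nabla Eu|+|\nabla^2 Eu|\leq C$ on $\{x:|u|+|\nabla u|+|\nabla^2 u|\leq \lambda/C\text{ at the reflected point}\}$. The reflection-based extension available for $C^{1,1}$ domains has this property; for Stein's general Lipschitz extension it is, as the paper says, unclear. So in your write-up you should replace ``Stein-type extension'' in part~(2) by the reflection extension and justify the superlevel-set comparison explicitly; otherwise the measure bound in the form stated does not follow.
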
                                                                                                                                                                                                                                                                                                    \begin{proof}
See the proof of Proposition A2 in \cite{FJM-02}. The condition in (b) that the domain is of class $C^{1,1}$ is  not demanded there. The argument is that one can extend $W^{2,p}(A)$ to $W^{2,p}(\R^n)$ when $A$ is only Lipschitz (see  e.g. \cite{Stein70}). However, if for $u \in W^{2,p}(A)$ we denote this extension by $Eu$ then it is not clear to the author weather the term
$$\int_{\{
(|Eu|+|\nabla Eu|+|\nabla^2 Eu|) \geq \lambda/C_1(n,p,A) \}} (|Eu|+|\nabla Eu|+|\nabla^2 Eu|)^p \ud x,$$
can be controlled with the term
$$\int_{\{
(|u|+|\nabla u|+|\nabla^2 u|) \geq \lambda/C_2(n,p,A) \}} (|u|+|\nabla u|+|\nabla^2 u|)^p \ud x.$$
For the standard extension operator, constructed using the reflexion, this can be easily proved to be valid.
\end{proof}
\begin{remark}\label{rem:jednakost}
Notice that due to \cite[Theorem 3, Section 6]{Evans92}  for $u,v \in W^{1,p}(A)$ we have that
$$\{u=v\}=\{u=v, \nabla u=\nabla v\}\cup N,$$
where $N$ is the set of measure zero. From this it follows that for $u,v \in W^{2,p}(A)$ we have that
$$\{u=v\}=\{u=v, \nabla u=\nabla v, \nabla^2 u=\nabla^2 v\}\cup N,$$
where $N$ is the set of measure zero.
\end{remark}
\begin{corollary}\label{kor:ekvi00}
Let $1 \leq p \leq \infty$, $\lambda>0$ and let $A$ be a open bounded open set in $\R^n$ with Lipschitz boundary.
\begin{enumerate}[(a)]
\item
Suppose that $(u^h)_{h>0} \subset W^{1,p}(A)$ is a sequence such that $u^h \rightharpoonup u$ weakly in $W^{1,p}$ and  $(|\nabla u^h|^p)_{h>0}$ is equi-integrable. Then there exists $(u^{\lambda,h})_{\lambda,h>0}$ such that
\begin{eqnarray*}
 \|u^{\lambda,h}\|_{W^{1,\infty}} & \leq & C(n,p,A) \lambda, \\
\lim_{\lambda \to \infty} \sup_{h>0} \|u^{\lambda,h}-u^h\|_{W^{1,p}}&=&0, \\
\|u^{\lambda,h}\|_{W^{1,p}} & \leq & C(n,p,A) \| u^h \|_{W^{1,p}}.
 \end{eqnarray*}
\item Assume additionally that $A$ has the boundary of class $C^{1,1}$ and that $(u^h)_{h>0} \subset W^{2,p}(A)$ is a sequence such that $u^h \rightharpoonup u$ weakly in $W^{2,p}$ and $(|\nabla^2 u^h|^p)_{h>0}$ is equi-integrable. Then there exists $(u^{\lambda,h})_{\lambda,h>0}$ such that
\begin{eqnarray*}
\|u^{\lambda,h}\|_{W^{2,\infty}} & \leq & C(n,p,A) \lambda, \\
\lim_{\lambda \to \infty} \sup_{h>0} \|u^{\lambda,h}-u^h\|_{W^{2,p}}&=&0, \\
\|u^{\lambda,h}\|_{W^{2,p}} & \leq & C(n,p,A) \| u^h \|_{W^{2,p}}
\end{eqnarray*}
\end{enumerate}
\end{corollary}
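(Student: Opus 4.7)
The plan is to apply Proposition~\ref{propa1} pointwise in $h$ and to promote its pointwise estimates to uniform-in-$h$ statements via the equi-integrability hypothesis. I will describe part (a) in detail; part (b) is identical with the appropriate Sobolev order.

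First, I would upgrade the hypothesis from equi-integrability of $(|\nabla u^h|^p)_{h>0}$ to equi-integrability of $(f^h)^p$, where $f^h:=|u^h|+|\nabla u^h|$. This is automatic: since $u^h\rightharpoonup u$ weakly in $W^{1,p}(A)$ on a bounded Lipschitz domain, Rellich--Kondrachov yields $u^h\to u$ strongly in $L^p(A)$, and strong $L^p$ convergence of a sequence implies equi-integrability of the $p$-th powers. For part (b) the analogous upgrade is obtained via strong $W^{1,p}$ convergence inherited from weak $W^{2,p}$ convergence.

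Next, for each $h,\lambda>0$, let $u^{\lambda,h}$ be the function produced by Proposition~\ref{propa1}(1) applied to $u^h$, and set $E^{\lambda,h}:=\{u^{\lambda,h}\neq u^h\}$. The bound $\|u^{\lambda,h}\|_{W^{1,\infty}}\leq C(n,p,A)\lambda$ is immediate, and by Remark~\ref{rem:jednakost} the functions $u^{\lambda,h}$ and $u^h$ together with their gradients agree almost everywhere on $A\setminus E^{\lambda,h}$, so
\[
\|u^{\lambda,h}-u^h\|_{W^{1,p}}^p=\int_{E^{\lambda,h}}\bigl(|u^{\lambda,h}-u^h|^p+|\nabla u^{\lambda,h}-\nabla u^h|^p\bigr)\,dx.
\]
Combining $|u^{\lambda,h}|+|\nabla u^{\lambda,h}|\leq C\lambda$ with the measure estimate
\[
|E^{\lambda,h}|\leq \frac{C}{\lambda^p}\int_{\{f^h\geq \lambda/C\}}(f^h)^p\,dx
\]
supplied by Proposition~\ref{propa1}, the truncation contribution is bounded by $(C\lambda)^p|E^{\lambda,h}|\leq C\int_{\{f^h\geq\lambda/C\}}(f^h)^p\,dx$, while the contribution from $u^h$ itself is directly at most $\int_{\{f^h\geq\lambda/C\}}(f^h)^p\,dx$. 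Taking the supremum in $h$ and sending $\lambda\to\infty$, the right-hand side vanishes by the equi-integrability established in the first step, which gives the desired uniform convergence.

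Finally, for the third estimate I would split the $L^p$ norm of $u^{\lambda,h}$ (and of $\nabla u^{\lambda,h}$) into integrals over $A\setminus E^{\lambda,h}$ and $E^{\lambda,h}$: on the former set the integrand equals $|u^h|^p$ (resp.\ $|\nabla u^h|^p$), while on the latter the $L^\infty$ bound combined with $|E^{\lambda,h}|\leq C\lambda^{-p}\|u^h\|_{W^{1,p}}^p$ yields a contribution $\leq C\|u^h\|_{W^{1,p}}^p$. The main (and essentially the only) obstacle is bookkeeping: one must verify that every constant in the chain of estimates depends solely on $n,p,A$ and not on $h$, which is guaranteed by the uniform constants provided in Proposition~\ref{propa1}, so no genuine difficulty arises.
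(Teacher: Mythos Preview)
Your proposal is correct and follows essentially the same route as the paper: apply Proposition~\ref{propa1} to each $u^h$, upgrade the equi-integrability of $|\nabla u^h|^p$ to that of $(|u^h|+|\nabla u^h|)^p$ via Rellich--Kondrachov, and combine the $W^{1,\infty}$ bound with the measure estimate on $E^{\lambda,h}$ to control the difference on the bad set. The only place where your write-up is slightly looser than the paper is the claim that the $u^h$-contribution on $E^{\lambda,h}$ is ``directly'' bounded by $\int_{\{f^h\geq\lambda/C\}}(f^h)^p$: since $E^{\lambda,h}$ need not be contained in $\{f^h\geq\lambda/C\}$, you should either split $E^{\lambda,h}$ into $\{f^h<\lambda/C\}$ and its complement (both pieces then give the desired bound), or argue as the paper does via $\sup_h|E^{\lambda,h}|\to 0$ together with equi-integrability of $(f^h)^p$.
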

\begin{proof}
The proof is the direct consequence of Proposition \ref{propa1}. We will prove only (a).
For each $u^h$ and $\lambda>0$ we choose $u^{\lambda,h}$ such that
\begin{eqnarray}
\label{kor:up1}\|u^{\lambda,h}\|_{W^{1,\infty}} & \leq & C(n,p,A) \lambda \\
\label{kor:up2} |A^{\lambda,h}| &\leq& \frac{C(n,p,A)}{\lambda^p} \int_{\{ |u^h|+|\nabla u^h| \geq
\lambda/C(n,p,A) \}} \big(|u^h|+|\nabla u^h|\big)^p \ud x,
\end{eqnarray}
where $A^{\lambda,h}=\{x \in A: u^\lambda(x) \neq u(x) \}$.
Notice that since $u^h \to u$ strongly in $L^p$ and $(|\nabla u^h|^p)_{h>0}$ is equi-integrable we have that
\begin{equation}
\lim_{\lambda \to \infty} \sup_{h>0} \int_{\{ |u^h|+|\nabla u^h| \geq
\lambda/C(n,p,A) \}} \big(|u^h|+|\nabla u^h|\big)^p \ud x=0.
\end{equation}
From this we easily see that $\lim_{\lambda \to \infty} \sup_{h>0} \lambda^p |A^{\lambda,h}|=0$. Using (\ref{kor:up1}) we conclude that
\begin{eqnarray*}
\lim_{\lambda \to \infty}\sup_{h>0}\left(\| u^{h}\|_{L^p(A^{\lambda,h})}+\| \nabla u^{h}\|_{L^p(A^{\lambda,h})}\right) & \to& 0, \\
\lim_{\lambda \to \infty}\sup_{h>0}\left(\| u^{\lambda,h}\|_{L^p(A^{\lambda,h})}+\| \nabla u^{\lambda,h}\|_{L^p(A^{\lambda,h})}\right) & \to& 0.
\end{eqnarray*}
Notice also simple estimate
$$\|u^{\lambda,h}\|^p_{L^p (A^{\lambda,h}) }+\|\nabla u^{\lambda,h}\|^p_{L^p(A^{\lambda,h})} \leq  2 C(n,p,A)^2\|u^h\|^p_{W^{1,p}}        $$
From this we have the claim since
\begin{eqnarray*}
\| u^{\lambda,h}-u^h\|_{W^{1,p}} &=& \| u^{\lambda,h}-u^h\|_{L^p(A^{\lambda,h})}+\| \nabla u^{\lambda,h}-\nabla u^h\|_{L^p(A^{\lambda,h})} \\ & \leq& \| u^{\lambda,h}\|_{L^p(A^{\lambda,h})}+\| \nabla u^{\lambda,h}\|_{L^p(A^{\lambda,h})}+\|u^h\|_{L^p(A^{\lambda,h})}+\|\nabla u^h\|_{L^p(A^{\lambda,h})}. \\
\|u^{\lambda,h}\|_{W^{1,p}} &=& \|u^h\|_{L^p \left((A^{\lambda,h})^c \right)}+\|\nabla u^h\|_{L^p \left((A^{\lambda,h})^c \right)}\\ & & +\|u^{\lambda,h}\|_{L^p (A^{\lambda,h}) }+\|\nabla u^{\lambda,h}\|_{L^p(A^{\lambda,h})}.
\end{eqnarray*}
\end{proof}
The following proposition we prove by combining the ideas of extension given in \cite{BoceaFon02} and \cite{BraidesZeppieri07} with Proposition \ref{propa1}.
\begin{proposition} \label{prop:ex2}
 Let $1 \leq p \leq \infty$ and  $A$ be a bounded open set in $\R^2$ with Lipschitz boundary.
 Suppose that $u \in W^{1,p}(A \times I,\R^3)$.
Then for every $0<h<1$ there exists $u^{\lambda,h}  \in W^{1,\infty}(A \times I,\R^3)$ such that
\begin{eqnarray*}
\|u^{\lambda,h}\|_{L^\infty}+\|\nabla_h u^{\lambda,h}\|_{L^\infty} & \leq & C(n,p,A) \lambda, \\
|\{x \in A: u^{\lambda,h}(x) \neq u(x) \}| &\leq& \frac{C(n,p,A)}{\lambda^p} \int_{\{ |u|+|\nabla_h u| \geq
\lambda/C(n,p,A) \}} \big(|u|+|\nabla_h u|\big)^p \ud x. \end{eqnarray*}
\end{proposition}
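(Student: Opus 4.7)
The plan is to reduce the claim to Proposition~\ref{propa1}(a) in $\R^3$ via the change of variables $y_3 := h x_3$ that converts the scaled gradient $\nabla_h$ into an ordinary Euclidean gradient, while keeping every constant independent of $h$. The program has three stages: extend $u$ to a compactly supported function on $\R^3$ with $h$-independent scaled-gradient estimates, apply Proposition~\ref{propa1}(a) after unscaling the vertical variable, and then rescale back to obtain $u^{\lambda,h}$.

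For the extension I would first reflect $u$ in $x_3$ across the boundary planes $x_3=\pm 1/2$ to extend $u$ from $A\times I$ to $A\times 3I$. Reflection preserves all partial-derivative $L^p$ norms with universal constants; in particular it leaves $\|\nabla_h\cdot\|_{L^p}$ intact up to a factor of $3^{1/p}$ and maps superlevel sets of $|u|+|\nabla_h u|$ isometrically. Next, I would apply a Stein-type extension operator acting only in the $x'$ variables (well-defined since $\partial A$ is Lipschitz) and multiply by a smooth compactly supported cutoff in $x'$ to obtain a compactly supported extension. Because this operator commutes with $\partial_{x_3}$, it preserves $\|\partial_{x_3}\cdot\|_{L^p}$ exactly while the in-plane derivatives are controlled by the standard constant $C(A)$, independent of $h$. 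The outcome is a compactly supported $\bar u \in W^{1,p}(\R^3,\R^3)$ with $\bar u = u$ on $A\times I$ and $\|\bar u\|_{L^p(\R^3)} + \|\nabla_h \bar u\|_{L^p(\R^3)} \le C(A)\bigl(\|u\|_{L^p(A\times I)} + \|\nabla_h u\|_{L^p(A\times I)}\bigr)$.

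Define $v(y) := \bar u(y_1, y_2, y_3/h)$ on $\R^3$. A direct calculation gives $|\nabla v(y)| = |\nabla_h \bar u(y_1,y_2,y_3/h)|$, and $v$ has compact support in a fixed bounded ball $B\subset\R^3$ whose size can be chosen independently of $h\in(0,1)$. Applying Proposition~\ref{propa1}(a) to $v$ on $B$ produces $v^\lambda\in W^{1,\infty}(B,\R^3)$ satisfying $\|v^\lambda\|_{W^{1,\infty}} \le C\lambda$ and the standard measure estimate, with a constant $C$ depending only on $p$ and $A$. Setting $u^{\lambda,h}(x) := v^\lambda(x_1,x_2,hx_3)$ immediately gives $\|u^{\lambda,h}\|_{L^\infty}+\|\nabla_h u^{\lambda,h}\|_{L^\infty}\le C\lambda$. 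Changing variables back on both sides of the measure estimate cancels the common Jacobian factor $h$, yielding
\begin{equation*}
|\{x \in \R^3 : u^{\lambda,h}(x) \neq \bar u(x)\}| \le \tfrac{C}{\lambda^p}\int_{\{|\bar u|+|\nabla_h \bar u|\ge \lambda/C\}}(|\bar u|+|\nabla_h \bar u|)^p\,dx.
\end{equation*}

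The main obstacle I expect is the final bookkeeping step: restricting the left-hand side to $A\times I$ (where $\bar u = u$) is immediate, but reducing the right-hand integral, which involves $\bar u$ on $\R^3$, to one over $A\times I$ in terms of $u$ while preserving the tail form requires care. For the reflection portion of the extension this is trivial, since values are pointwise identified and superlevel sets transform bijectively. For the in-plane Stein extension one invokes its standard pointwise bound in terms of the Hardy--Littlewood maximal function of $|u|+|\nabla_h u|$, which controls the superlevel sets up to a multiplicative change of threshold and a constant factor; these are absorbed into the final $C(n,p,A)$. Tracking these constants carefully through the three stages then delivers the inequality in the exact form stated.
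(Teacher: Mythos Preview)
Your argument has a gap before the bookkeeping step you flag. After a single reflection in $x_3$ (to $A\times 3I$), Stein extension in $x'$, and cutoff in $x'$, the function $\bar u$ lives on $\R^2\times 3I$, and hence $v(y)=\bar u(y',y_3/h)$ is only defined on the thin slab $\R^2\times 3hI$, not on all of $\R^3$ as you assert. You cannot extend $v$ by zero across $y_3=\pm 3h/2$ without losing $W^{1,p}$ regularity, and multiplying by a cutoff $\chi(y_3)$ equal to $1$ on $hI$ and supported in $3hI$ introduces a term $|\chi'|\,|v|\sim h^{-1}|u|$ into $|\nabla v|$, which is not controlled by $|u|+|\nabla_h u|$ and destroys the tail estimate. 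Applying Proposition~\ref{propa1} directly on the $h$-dependent slab fails for the same reason: the constant in that proposition comes from an extension operator for the domain, and for slabs of thickness $O(h)$ this operator norm blows up like $h^{-1}$.

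The paper's proof avoids this by staying on the fixed domain $A\times I$ throughout. It rescales $u$ to the physical strip $A\times hI$, extends it $2h$-periodically by reflection in $x_3$ to fill all of $A\times I$, and applies Proposition~\ref{propa1} there with an $h$-independent constant. By periodicity the tail integral of the extended function over $A\times I$ equals, up to a fixed factor, $1/h$ times the tail integral of $|u|+|\nabla_h u|$ over a single strip, while the bad set $\{\tilde u^{\lambda,h}\neq\tilde u^h\}$ is distributed among the $O(1/h)$ strips in an uncontrolled way. A pigeonhole argument then selects one strip where the bad set carries at most its proportional share, and translating (and possibly reflecting) that strip back to $A\times I$ yields $u^{\lambda,h}$. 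Your rescaling idea is the natural first move, but it only closes once you introduce this periodic extension and good-strip selection---which is precisely the mechanism the paper uses. Incidentally, the in-plane extension you worry about is then unnecessary: Proposition~\ref{propa1} is applied on $A\times I$ itself and handles the Lipschitz boundary of $A$ internally.
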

\begin{proof}
The idea is to look the problem on the physical domain $A \times hI$, extend it by reflection and translation to the domain $A \times I$ and then apply  Proposition \ref{propa1} and choose the good strip.
Define $\tilde{u}:A \times I \to \R^3$ as $2h$ periodic function in the variable $x_3$ in the following way
$$ \tilde{u}^h(x',x_3)=\left\{ \begin{array}{lr}u(x',\tfrac{x_3}{h}), & \text{ if } x_3 \in hI, \\ u(x',1-\tfrac{x_3}{h}), & \text{ if } x_3 \in [h/2,3h/2], \end{array} \right.$$
and extend it by periodicity on $A \times I$.
This implies that we have $2l+1=2\lfloor\tfrac{1}{2h}-\tfrac{1}{2}\rfloor+1$ whole strips and at most $2$ strips   with the boundary $x_3=1/2$ i.e. $x_3=-1/2$ where the function $\tilde{u}^h$ does not exhaust the full period $2h$.
Denote for $i \in \{-l,\dots,l\}$ the sets
$K_i=[(2i-1)h/2,(2i+1)h/2]$ and $L_1=[(2l+1)h/2,1/2] $, $L_2=[-1/2,-(2l+1)h/2]$.
Notice that $I=\cup_{i=0}^l K_i\cup L_1\cup L_2$ and
$$\nabla \tilde{u}^h=\left\{ \begin{array}{lr} \nabla_h u(x',\tfrac{x_3}{h}), & \text{ if } x_3 \in hI, \\ \left(\partial_1 u(x',1-\tfrac{x_3}{h})\, , \,\partial_2 u(x',1-\tfrac{x_3}{h})\, ,\,-\tfrac{1}{h} \partial_3 u(x',1-\tfrac{x_3}{h}) \right)  , & \text{ if } x_3 \in [h/2,3h/2]. \end{array} \right. $$
Notice that $\tilde{u}^h \in W^{1,p}(A \times I,\R^3)$.
We apply Proposition \ref{propa1} on the function $\tilde{u}^h$ to obtain the function $\tilde{u}^{\lambda,h}$ such that
\begin{eqnarray*}
\|\tilde{u}^{\lambda,h}\|_{W^{1,\infty}} & \leq & C(n,p,A) \lambda \\
|\{x \in A: \tilde{u}^{\lambda,h}(x) \neq \tilde{u}(x) \}| &\leq& \frac{C(n,p,A)}{\lambda^p} \int_{\{ |\tilde{u}^h|+|\nabla \tilde{u}^h| \geq
\lambda/C(n,p,A) \}} \big(|\tilde{u}^h|+|\nabla \tilde{u}^h|\big)^p \ud x. \end{eqnarray*}
We want to show that there exists strip which satisfies the appropriate estimate. Notice that, due to our construction we have for every $i\in \{-l,\dots l\}$ and $j\in \{1,2\}$
\begin{eqnarray}
& &\int_{\{ |\tilde{u}^h|+|\nabla \tilde{u}^h| \geq
\lambda/C(n,p,A)  \} \cap A\times  K_i} \big(|\tilde{u}^h|+|\nabla \tilde{u}^h|\big)^p \, dx= \\ & & \hspace{15ex} \nonumber h\int_{\{ |u|+|\nabla_h u| \geq\lambda/C(n,p,A)  \}} \big(|u|+|\nabla_h u|\big)^p \, dx, \\ & &\nonumber
\int_{\{ |\tilde{u}^h|+|\nabla \tilde{u}^h| \geq
\lambda/C(n,p,A)  \} \cap A\times L_j} \big(|\tilde{u}^h|+|\nabla \tilde{u}^h|\big)^p \, dx \\ & &\hspace{15ex} \nonumber \leq h\int_{\{ |u|+|\nabla_h u| \geq\lambda/C(n,p,A)  \}} \big(|u|+|\nabla_h u|\big)^p \, dx.
\end{eqnarray}
From this we conclude that there exists strip i.e. $i\in \{-l,\dots, l\}$ and the set $A \times K_i$ such that
\begin{eqnarray*}
& &\tfrac{1}{h}|\{x \in A \times K_i: \tilde{u}^{\lambda,h}(x) \neq \tilde{u}(x) \}| \leq \\ & &\hspace{10ex} \frac{3C(n,p,A)}{\lambda^p}\int_{\{ |u|+|\nabla_h u| \geq\lambda/C(n,p,A)  \}} \big(|u|+|\nabla_h u|\big)^p \, dx.
\end{eqnarray*}
To obtain $u^{\lambda,h}$ we take $\tilde{u}^{\lambda,h}|_{A \times K_i}$, translate it to the strip $A \times [-h/2,h/2]$, if necessary reflect it, and then stretch it to the domain $A \times I$.
\end{proof}
The proof of the following corollary goes in the same way as the proof of Corollary \ref{kor:ekvi00}, using Proposition \ref{prop:ex2} instead of Proposition \ref{propa1}. We will just state the result.
\begin{corollary}\label{nemoguce2}
 Let $1 \leq p \leq \infty$ and  $A$ be a bounded open set in $\R^2$ with Lipschitz boundary.
Suppose that $(u^h)_{h>0} \subset W^{1,p}(A)$ is a sequence such that $u^h \rightharpoonup u$ weakly in $W^{1,p}$ and $(|\nabla_h u^h|^p)_{h>0}$ is equi-integrable. Then there exists $(u^{\lambda,h})_{\lambda,h>0}$ such that
\begin{eqnarray*}
\|u^{\lambda,h}\|_{L^\infty}+\|\nabla_h u^{\lambda,h}\|_{L^\infty} & \leq & C(n,p,A) \lambda, \\
\lim_{\lambda \to \infty} \sup_{h>0} \left(\|u^{\lambda,h}-u^h\|_{L^p}+\|\nabla_h u^{\lambda,h}-\nabla_h u^h\|_{L^p}\right)&=&0, \\
\|u^{\lambda,h}\|_{L^p}+\|\nabla_h u^{\lambda,h}\|_{L^p} & \leq & C(n,p,A) \left(\|u^{h}\|_{L^p}+\|\nabla_h u^{h}\|_{L^p}\right).
 \end{eqnarray*}
\end{corollary}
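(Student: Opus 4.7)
The proof follows the template of Corollary \ref{kor:ekvi00} essentially verbatim, with Proposition \ref{prop:ex2} replacing Proposition \ref{propa1}. For each $h>0$ and $\lambda>0$, I would invoke Proposition \ref{prop:ex2} on $u^h$ to obtain $u^{\lambda,h}\in W^{1,\infty}(A\times I,\R^3)$ with $\|u^{\lambda,h}\|_{L^\infty}+\|\nabla_h u^{\lambda,h}\|_{L^\infty}\le C(n,p,A)\lambda$ and, setting $A^{\lambda,h}=\{u^{\lambda,h}\ne u^h\}$,
\begin{equation*}
|A^{\lambda,h}|\le \frac{C(n,p,A)}{\lambda^p}\int_{\{|u^h|+|\nabla_h u^h|\ge\lambda/C\}}\bigl(|u^h|+|\nabla_h u^h|\bigr)^p\,dx.
\end{equation*}
This immediately yields the first bound. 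For the third bound, split $\|u^{\lambda,h}\|_{L^p}^p$ over $A^{\lambda,h}$ and its complement; on the complement $u^{\lambda,h}=u^h$ (and so $\nabla_h u^{\lambda,h}=\nabla_h u^h$ a.e.\ by Remark \ref{rem:jednakost}), while on $A^{\lambda,h}$ I use the $L^\infty$ bound together with $|A^{\lambda,h}|\lambda^p\le C\,\bigl(\|u^h\|_{L^p}^p+\|\nabla_h u^h\|_{L^p}^p\bigr)$, which gives the claim with a constant depending only on $n,p,A$.

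The crux is the uniform-in-$h$ smallness of $\|u^{\lambda,h}-u^h\|_{L^p}+\|\nabla_h u^{\lambda,h}-\nabla_h u^h\|_{L^p}$ as $\lambda\to\infty$. Since $u^h\rightharpoonup u$ weakly in $W^{1,p}$ (so that $(\|u^h\|_{L^p})_{h>0}$ is bounded) and $(|\nabla_h u^h|^p)_{h>0}$ is equi-integrable, the sequence $(|u^h|+|\nabla_h u^h|)^p$ is also equi-integrable; in particular
\begin{equation*}
\lim_{\lambda\to\infty}\sup_{h>0}\int_{\{|u^h|+|\nabla_h u^h|\ge\lambda/C\}}\bigl(|u^h|+|\nabla_h u^h|\bigr)^p\,dx=0,
\end{equation*}
because the measure of the super-level set goes to zero uniformly in $h$ by Chebyshev and the $L^p$-bound. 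Combining this with the estimate for $|A^{\lambda,h}|$ gives $\sup_{h>0}\lambda^p|A^{\lambda,h}|\to 0$, hence $\sup_{h>0}\bigl(\|u^{\lambda,h}\|_{L^p(A^{\lambda,h})}+\|\nabla_h u^{\lambda,h}\|_{L^p(A^{\lambda,h})}\bigr)\to 0$ via the $L^\infty$ bound, and $\sup_{h>0}\bigl(\|u^h\|_{L^p(A^{\lambda,h})}+\|\nabla_h u^h\|_{L^p(A^{\lambda,h})}\bigr)\to 0$ directly by equi-integrability.

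Finally I would write
\begin{equation*}
\|u^{\lambda,h}-u^h\|_{L^p}+\|\nabla_h u^{\lambda,h}-\nabla_h u^h\|_{L^p}\le \sum\bigl(\|u^{\lambda,h}\|_{L^p(A^{\lambda,h})}+\|u^h\|_{L^p(A^{\lambda,h})}+\|\nabla_h u^{\lambda,h}\|_{L^p(A^{\lambda,h})}+\|\nabla_h u^h\|_{L^p(A^{\lambda,h})}\bigr),
\end{equation*}
and conclude by taking $\sup_{h>0}$ and then $\lambda\to\infty$. The only nontrivial point — and the main place one has to be slightly careful — is that the equi-integrability of $(|\nabla_h u^h|^p)_{h>0}$ passes to $(|u^h|+|\nabla_h u^h|)^p$; this uses boundedness of $\|u^h\|_{L^p}$ (from weak convergence and Banach–Steinhaus) plus the elementary fact that a bounded sequence with strong $L^p$ convergence of the zero-order part and equi-integrable higher-order part is itself equi-integrable. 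Everything else is just bookkeeping identical to Corollary \ref{kor:ekvi00}.
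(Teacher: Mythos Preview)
Your proposal is correct and is exactly the approach the paper indicates: the paper does not spell out a proof but simply says the argument ``goes in the same way as the proof of Corollary~\ref{kor:ekvi00}, using Proposition~\ref{prop:ex2} instead of Proposition~\ref{propa1},'' which is precisely what you do. The only cosmetic difference is that the paper's proof of Corollary~\ref{kor:ekvi00} obtains equi-integrability of $(|u^h|^p)_{h>0}$ by directly invoking the compact embedding $W^{1,p}\hookrightarrow L^p$ (writing ``since $u^h\to u$ strongly in $L^p$''), whereas you phrase the same step slightly more verbosely; the content is identical.
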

The following proposition is just simple adaption of \cite[Lemma 1.2]{FoMuPe98}.
\begin{proposition} \label{ekvi1}
Let $p>1$. Let $A \subset \R^n$ be a open bounded set.
\begin{enumerate}                                                                                                                                                                                                                                                                                                    \item Let $(w_n)_{n \in \N}$  be a bounded sequence in $W^{1,p}(A)$. Then there exist a subsequence $(w_{n(k)})_{k \in \N}$
    and a sequence $(z_k)_{k \in \N} \subset W^{1,p}(A)$ such that
\begin{equation}\label{stefan1} |\{z_k \neq w_{n(k)} \}| \to 0  ,
\end{equation}
as $k \to \infty$ and $\big(|\nabla z_k|^p \big)_{k \in \N}$ is equi-integrable. Each $z_k$ may be chosen to be
Lipschitz function. If $w_n \rightharpoonup w$ weakly in $W^{1,p}$ then $z_k \rightharpoonup w$ weakly in $W^{1,p}$.
\item Let $(w_n)_{n \in \N}$  be a bounded sequence in $W^{2,p}(A)$. Then there exist a subsequence $(w_{n(k)})_{k \in \N}$
    and a sequence $(z_k)_{k \in \N} \subset W^{2,p}(A)$ such that
 \begin{equation} \label{stefan2}
 |\{z_k \neq w_{n(k)} \}|                                                                                                                                                                                                        \to 0  ,
 \end{equation}                                                                                                                                                                                                                                                                                                           as $k \to \infty$ and $\big(|\nabla^2 z_k|^p \big)_{k \in \N}$ is equi-integrable. Each  $z_k$ may be chosen such
that $z_k \in W^{2,\infty}(S)$. If $w_n \rightharpoonup w$ weakly in $W^{2,p}$ then $z_k \rightharpoonup w$ weakly in $W^{2,p}$.
\end{enumerate}
\end{proposition}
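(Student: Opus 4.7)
The plan is to invoke the Lipschitz (respectively $C^{1,1}$) truncation machinery of Proposition \ref{propa1}, following the strategy of Fonseca, Müller and Pedregal. I will describe part (1) in detail; part (2) is entirely analogous, using Proposition \ref{propa1}(2) in place of (1) and replacing $(|w_n|+|\nabla w_n|)^p$ by $(|w_n|+|\nabla w_n|+|\nabla^2 w_n|)^p$.

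Set $a_n := (|w_n|+|\nabla w_n|)^p$, a bounded sequence in $L^1(A)$. Up to a first subsequence I may assume $a_n \rightharpoonup \mu$ weakly-$*$ in the space of Radon measures on $\bar A$. Applying Proposition \ref{propa1}(1) with parameter $\lambda_n>0$ produces $z_n := w_n^{\lambda_n} \in W^{1,\infty}(A)$ with
\[
\|z_n\|_{W^{1,\infty}} \le C\lambda_n, \qquad \lambda_n^p\,|E_n| \le C \int_{\{a_n \ge (\lambda_n/C)^p\}} a_n\,dx,
\]
where $E_n := \{z_n \ne w_n\}$. Choose $\lambda_n\to\infty$ slowly enough that $\int_{\{a_n \ge (\lambda_n/C)^p\}} a_n\,dx \to 0$; this is possible by a diagonal argument, since for each fixed $n$ the tail vanishes as $\lambda_n\to\infty$ (dominated convergence). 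Property (\ref{stefan1}) then follows immediately from the second displayed inequality, and each $z_n$ is Lipschitz by construction.

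The main obstacle is equi-integrability of $(|\nabla z_n|^p)$. On $A\setminus E_n$, Remark \ref{rem:jednakost} gives $\nabla z_n = \nabla w_n$ a.e., while on $E_n$ one has $|\nabla z_n|\le C\lambda_n$. Hence for any measurable $S\subset A$,
\[
\int_S |\nabla z_n|^p\,dx \;\le\; \int_{S\setminus E_n} a_n\,dx + C \int_{\{a_n \ge (\lambda_n/C)^p\}} a_n\,dx,
\]
and the second term tends to zero uniformly in $S$. Using the sharper description of $E_n$ in Proposition \ref{propa1}(1) via the Hardy-Littlewood maximal function, one has $a_n \le M a_n \le (\lambda_n/C)^p$ a.e.\ on $A\setminus E_n$, so the first integrand equals the truncated sequence $\tilde a_n := a_n\wedge(\lambda_n/C)^p$. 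After one further extraction, I may arrange that $\tilde a_n$ converges weakly in $L^1(A)$: the truncation at a diverging threshold precisely excises the singular part of $\mu$ (the remaining mass is absolutely continuous with $L^1$ density), and by choosing $\lambda_n$ slightly more slowly one obtains uniform equi-integrability of $(\tilde a_n)$ by Dunford-Pettis. This is the delicate step of the argument.

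Finally, if $w_n \rightharpoonup w$ in $W^{1,p}$, then $(z_n)$ is bounded in $W^{1,p}$ (the bound $\lambda_n|E_n|^{1/p}$ being small by construction), and $z_n - w_{n(k)} \to 0$ in $L^p$ since $|E_n|\to 0$ together with a uniform $L^p$ bound; a standard weak/strong argument then yields $z_n \rightharpoonup w$ in $W^{1,p}$. Part (2) goes through identically, producing $z_k\in W^{2,\infty}$ with $(|\nabla^2 z_k|^p)$ equi-integrable, the only change being that the $C^{1,1}$ regularity of $\partial A$ is needed to invoke Proposition \ref{propa1}(2).
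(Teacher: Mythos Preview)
Your equi-integrability step contains a genuine gap. You claim that by choosing $\lambda_n\to\infty$ appropriately one can simultaneously achieve the tail condition $\int_{\{a_n\ge(\lambda_n/C)^p\}}a_n\to 0$ and the equi-integrability of $\tilde a_n:=a_n\wedge(\lambda_n/C)^p$. These two requirements are in general \emph{incompatible}. Take $A=(-1,1)$ and $w_n$ the piecewise affine tent of height $n^{1/p-1}$ supported on $[0,2/n]$; then $(w_n)$ is bounded in $W^{1,p}$, and $a_n=(|w_n|+|w_n'|)^p$ equals (up to constants) $n\,1_{[0,2/n]}$. The tail integral equals a fixed positive constant whenever $(\lambda_n/C)^p\le \|a_n\|_\infty\sim n$, so the tail condition forces $(\lambda_n/C)^p>n$. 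But then the truncation is vacuous, $\tilde a_n=a_n$, and $a_n$ concentrates at the origin and is not equi-integrable. No further subsequence rescues this. The heuristic ``truncation at a diverging threshold excises the singular part of $\mu$'' is therefore false as stated.

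The paper, following \cite{FoMuPe98}, proceeds differently at precisely this point. With $b_n:=|w_n|+|\nabla w_n|$ (no $p$-th power) and its maximal function $Mb_n$, the Lipschitz truncation $z_k$ at level $k$ satisfies the \emph{pointwise} bound $|\nabla z_k|\le C\,T_k(Mb_{n(k)})$ on the whole of $A$ (up to a set of measure $\le k^{-(p+1)}$), not merely on $A\setminus E_k$: on the bad set one has both $|\nabla z_k|\le Ck$ and $Mb_{n(k)}\ge k$, so $T_k(Mb_{n(k)})=k$ dominates there as well. Equi-integrability is then obtained for the dominating sequence: after passing to a subsequence so that $(Mb_n)$ generates a Young measure $\{\mu_x\}$ with $\int_A\int_\R|s|^p\,d\mu_x(s)<\infty$, a diagonal choice of $n(k)$ gives $|T_k(Mb_{n(k)})|^p\rightharpoonup\langle\mu_\cdot,|\cdot|^p\rangle$ weakly in $L^1(A)$, and Dunford--Pettis yields equi-integrability. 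This maximal-function domination is exactly the mechanism your argument lacks; simply truncating $a_n$ itself cannot produce a dominating sequence on the bad set.

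A secondary point: for part (2) you invoke Proposition~\ref{propa1}(2), which requires $\partial A\in C^{1,1}$, whereas the statement is for arbitrary open bounded $A$. The paper reduces to the smooth case by the localisation argument in Step~2 of \cite[Lemma~1.2]{FoMuPe98}; you should indicate how the general case is handled.
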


\begin{proof}
Proof of (i) is given in \cite[Lemma 1.2]{FoMuPe98}. The proof of (ii) goes in the same way. We can assume that the boundary of $A$ is of class $C^{1,1}$ (to deal with general open bounded set see the proof of Step 2 in \cite[Lemma 1.2]{FoMuPe98}.
Namely, we extend each $w_n$ on $\R^n$ such that the support of each $w_n$ lies in a fixed compact subset $K                                                                                                                                                                                                         \subset \R^n$. We denote this extension also by $w_n$.
Denote by $a_n=|w_n|+|\nabla w_n|+|\nabla^2 w_n|$ and by                                                                                                                                                                                                                                                             $$   Ma_n(x)=\sup_{r>0}\fint_{B(x,r)} a_n(y) dy, $$
the Hardy Littlewood maximal function. It is well known that
\begin{equation}\label{HLes} \|M(a_{n(k)}) \|_{L^p(\R^n)} \leq C(n,p) \|w_{n(k)}\|_{W^{2,p}(\R^n)} \leq C(n,p,A)
\|w_{n(k)}\|_{W^{2,p}(A)}
\end{equation}
We denote by $\mu=\{\mu_x\}_{x \in \Omega}$  the Young measures associated with the converging subsequence of                                                                                                                                                                                                        $(M(\nabla a_n))_{n \in \mathbb{N}}$. We have the following properties
\begin{enumerate}
\item $\int_\Omega \int_{\R} |s|^p d \mu_x < +\infty$.                                                                                                                                                                                                                                                               \item whenever $(f(M(a_n))_{n \in \mathbb{N}}$ converges weakly in $L^1(\Omega)$, its weak limit is given by
    $$\bar{f}(x):=\langle \mu_x,f \rangle, \textrm{ a.e.} x \in \Omega. $$                                                                                                                                                                                                                                           \end{enumerate}
For $k \in \mathbb{N}$ we consider the truncation map $T_k: \mathbb{R} \to \mathbb{R}$ given by
$$ T_k:= \left\{\begin{array}{ll} x, & |x| \leq k, \\ k\frac{x}{|x|}, & |x|>k \end{array} \right. $$                                                                                                                                                                                                                 In the same way as in proof of Lemma \cite[Lemma 1.2]{FoMuPe98} we obtain a subsequence $w_{n(k)}$ such that                                                                                                                                                                                                         $$|T_k (M(a_{n(k)}))|^p \rightharpoonup \overline{f} \textrm{ weakly in } L^1(A),$$                                                                                                                                                                                                                                  where
$$ \overline{f}(x)=\int_{\R} |s|^p d \mu_x(s).  $$
Set
$$ \tilde{R}_k:=\{ x \in \R^n: M(a_{n(k)}) < k\}.$$
Notice that for $k$ large enough, since the support of $w_{n(k)}$ lies in $K$, we have that $\tilde{R}_k \subset
K_1$ where $K_1$ is a compact subset of $\R^n$, $K_1 \supset K$.
So without the loss of generality we can assume that for each $k$ we have $\tilde{R}_k \subset K_1$.
Denote by $R_k$ the closed subset of $\tilde{R}_k \cap A$ such that
$$ |A \backslash  \tilde{R}_k|  \leq 2 |A \backslash  R_k|, $$
and
$$ | \tilde{R}_k \backslash R_k| \leq \frac{1}{k^{p+1}}. $$
By Proposition \ref{propa1} (ii) there exists $z_k \in W^{2,\infty}(A)$  such that
$$ z_k= w_{n(k)} \textrm{ a.e. on } R_k, \ \|z_k\|_{W^{2,\infty}} \leq C(n,p,A)k.   $$
We have
\begin{eqnarray*}
|\{x \in \Omega: z_k \neq w_{n(k)} \}| &\leq& |\tilde{R}_k|+\frac{1}{k^{p+1}} \\ &\leq& \frac{1}{k^p}
\|M a_{n(k)}\|^p_{L^p}+\frac{1}{k^{p+1}},
\end{eqnarray*}
and this term tends to zero as $k \to \infty$. For a.e. $x \in  R_k$ we have
$$ |\nabla^2 z_k|=|\nabla^2 w_{n(k)}| \leq \left|M(a_{n(k)})\right|=\left|T_k(M(a_{n(k)}))\right|, $$
while if $x \in A \cap \tilde{R}_k^c$ we have
$$ \left| \nabla^2 z_{k}(x)\right| \leq C(n,p,A)k \leq C(n,p,A)\left|T_k(M(a_{n(k)}))(x)\right|.  $$
For $x \in (A \cap \tilde{R}_k) \backslash R_k)$ we can only conclude $$ |\nabla^2 z_k (x)| \leq C(n,p,A) k.   $$
Since we have
$$ \int_A |\nabla^2 z_k| dx=\int_{R_k} |\nabla^2 z_k| dx+\int_{A \backslash R_k} |\nabla^2 z_k| dx
+\int_{(A \cap \tilde{R}_k)\backslash R_k} |\nabla^2 z_k| dx. $$
and
\begin{eqnarray*}
 \int_{R_k} |\nabla^2 z_k|^p dx & \leq & \int_{R_k} |T_k(M(a_{n(k)}))|^p  ,\\
 \int_{A \backslash R_k} |\nabla^2 z_k|^p & \leq & \int_{A \backslash R_k} |T_k(M(a_{n(k)}))|^p, \\
 \int_{(A \cap \tilde{R}_k)\backslash R_k} |\nabla^2 z_k| dx &\leq&  \frac{C(n,p,A)}{k},
\end{eqnarray*}
taking into account that $T_k(M(a_{n(k)}))$ is equi-integrable, we have the claim. It is easy to see that from
the property (\ref{stefan2}) it follows  that $(z_k)_{k \in \N}$ has the same weak limit as $(w_k)_{k \in \N}$.
\end{proof}

The following proposition can be found in \cite{BoceaFon02} (see also \cite{BraidesZeppieri07}).
\begin{proposition}\label{ekvi2}
 Let $1<p<+\infty$ and $A \subset \R^2$ be a open bounded set with Lipschitz boundary. Let $(h_n)_{n\in \N}$ be a sequence of positive numbers converging to zero and let $(w_n)_{n \in\N}$ be a bounded sequence in $W^{1,p}(A \times I,\R^3)$ satisfying:
$$ \limsup_{n \in \N} \int_{A \times I} \left| \left( \partial_1 w_{n} \ \partial_2 w_n \ \tfrac{1}{h_n} \partial_3 w_n   \right)\right|^p \, d x<+\infty.$$
Suppose further that $w_n \rightharpoonup \psi$ weakly in $W^{1,p}(A \times I,\R^3)$. Then there exists a subsequence
$(w_{n(k)})_{k \in \N}$ and a sequence $(z_k)_{k \in \N}$ such that
\begin{enumerate}[(a)]
\item $\lim_{k \to \infty} |x \in A \times I: \ z_k(x) \neq w_{n(k)}(x) | =0$,
\item $\left \{\left( \partial_1 z_k \ \partial_2 z_k \ \tfrac{1}{h_k} \partial_3 z_k   \right)\right\}$ is equi-integrable,
\item $z_k \rightharpoonup \psi$  weakly in  $W^{1,p}(A \times I,\R^3)$
\end{enumerate}
\end{proposition}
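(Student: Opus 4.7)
The plan is to reduce to the case of ordinary (unscaled) $W^{1,p}$ gradients via the periodic reflection construction already exploited in Proposition \ref{prop:ex2}, then apply the standard Fonseca--M\"uller--Pedregal equi-integrability result (Proposition \ref{ekvi1}(i)), and finally select a good thin strip by a pigeonhole argument and rescale back.

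For each $n$, define $\tilde{w}_n : A \times I \to \R^3$ as the $2h_n$-periodic extension in $x_3$ of $w_n(x', x_3/h_n)$ built by successive reflections over each half-period, exactly as in the proof of Proposition \ref{prop:ex2}. By construction $|\nabla \tilde{w}_n(x', x_3)| = |(\nabla_{h_n} w_n)(x', \tau_n(x_3))|$ pointwise a.e., where $\tau_n : I \to I$ is a piecewise affine folding map. Hence $(\tilde{w}_n)_{n \in \N}$ is bounded in $W^{1,p}(A \times I, \R^3)$, and up to a subsequence one obtains a weak limit $\tilde{\psi} \in W^{1,p}(A \times I, \R^3)$. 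Observe also that the hypothesis forces $\partial_3 w_n \to 0$ strongly in $L^p$, so $\psi$ is independent of $x_3$, and a direct check shows $\tilde{\psi} = \psi$.

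Next I would apply Proposition \ref{ekvi1}(i) to $(\tilde{w}_n)_{n \in \N}$ to extract a further subsequence $(\tilde{w}_{n(k)})_k$ and Lipschitz functions $(\tilde{z}_k)_k$ such that $(|\nabla \tilde{z}_k|^p)_k$ is equi-integrable on $A \times I$, $|\{\tilde{z}_k \neq \tilde{w}_{n(k)}\}| \to 0$, and $\tilde{z}_k \rightharpoonup \psi$ weakly in $W^{1,p}(A \times I, \R^3)$. Now partition $I$ as in the proof of Proposition \ref{prop:ex2} into $2l_k + 1 \sim 1/h_{n(k)}$ reflection cells $K_i^{(k)}$ of width $h_{n(k)}$ plus two smaller boundary strips. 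Writing $E_k := \{\tilde{z}_k \neq \tilde{w}_{n(k)}\}$, the pigeonhole principle produces an index $i(k)$ such that
\[
\frac{1}{h_{n(k)}}\bigl|E_k \cap (A \times K_{i(k)}^{(k)})\bigr| \;\le\; \frac{3}{|I|}\,|E_k| \;\to\; 0,
\]
and simultaneously (using the equi-integrability already established for $|\nabla \tilde{z}_k|^p$ across the strips, so that the average per-strip contribution controls any particular strip up to a uniform constant) the scaled integrals $\int_{A \times K_{i(k)}^{(k)}} |\nabla \tilde{z}_k|^p\,dx / h_{n(k)}$ remain equi-integrable in $k$.

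Finally define $z_k$ on $A \times I$ by translating $K_{i(k)}^{(k)}$ to the origin, reflecting if necessary to restore the original orientation, and stretching by $1/h_{n(k)}$ in $x_3$. Conclusion (a) is immediate from the pigeonhole bound; (b) follows because the chain rule gives $|(\partial_1 z_k, \partial_2 z_k, h_{n(k)}^{-1} \partial_3 z_k)| = |\nabla \tilde{z}_k|$ at the corresponding preimage points, and equi-integrability is transported from $A \times I$ (for $\tilde{z}_k$) to $A \times I$ (for the scaled gradient of $z_k$) through the measure-preserving change of variables on the strip; (c) follows since $z_k$ is, up to the vanishing exceptional set, a rescaled section of $\tilde{w}_{n(k)}$ whose weak limit $\psi$ does not depend on $x_3$ (so it is invariant under the translation, reflection and stretching). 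The main obstacle I expect is Step (b): one must verify that the equi-integrability of $|\nabla \tilde{z}_k|^p$ on the whole $A \times I$ really passes to equi-integrability of the scaled gradient of $z_k$ after the $1/h_{n(k)}$ dilation, which amplifies measures. The key point is to use the Young measure framework of Proposition \ref{ekvi1} applied uniformly in the pigeonhole selection, ensuring no level set of the maximal function can concentrate on any single strip.
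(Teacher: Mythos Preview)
The paper does not prove this proposition; it simply cites \cite{BoceaFon02} and \cite{BraidesZeppieri07}. Your proposal follows the Braides--Zeppieri strategy (periodic reflection in $x_3$, then apply the unscaled Fonseca--M\"uller--Pedregal lemma, then select a strip), which is indeed one of the two standard routes.

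Your sketch is correct up to the point you yourself flag, but the resolution you offer for step (b) does not close the gap. The difficulty is genuine: equi-integrability of $(|\nabla\tilde z_k|^p)_k$ on $A\times I$ only says that $\int_S|\nabla\tilde z_k|^p<\varepsilon$ whenever $|S|<\delta$, uniformly in $k$. After rescaling a strip of width $h_{n(k)}$ back to $A\times I$, you need the much stronger statement that $\int_S|\nabla\tilde z_k|^p<h_{n(k)}\varepsilon$ whenever $S$ lies in the chosen strip and $|S|<h_{n(k)}\delta$. A pigeonhole argument over strips cannot give this: you would have to select one strip that is simultaneously good for \emph{every} scale $\delta>0$, i.e.\ for countably many averaging conditions at once, and your phrase ``the average per-strip contribution controls any particular strip up to a uniform constant'' is exactly the step that fails---averaging says nothing about any individual strip.

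The actual mechanism (this is how \cite{BraidesZeppieri07} proceeds) is not a second pigeonhole but periodicity. One extends $\tilde w_n$ periodically in $x_3$ to all of $\R$ before taking the maximal function; since $a_n=|\tilde w_n|+|\nabla\tilde w_n|$ is then $2h_n$-periodic in $x_3$, so is $M a_n$, and hence so is the dominating function $g_k=|T_{\lambda_k}(Ma_{n(k)})|^p$ appearing in the proof of Proposition~\ref{ekvi1}. For a periodic $g_k$, equi-integrability over one full period (or over $A\times I$) transfers automatically to \emph{every} strip with the correct $h_{n(k)}$ scaling: given $S$ in a single period with $|S|<h_{n(k)}\delta$, tile $A\times I$ by its $\sim 1/h_{n(k)}$ translates to get $\tilde S$ with $|\tilde S|\lesssim\delta$, and use $\int_{\tilde S}g_k\gtrsim h_{n(k)}^{-1}\int_S g_k$. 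Since $|\nabla\tilde z_k|^p\leq C g_k$ pointwise, this yields the required scaled equi-integrability on \emph{every} strip, so the strip selection can be driven solely by the measure of the exceptional set (and the $L^p$ norm of $\tilde z_k$), where ordinary pigeonholing works.
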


The following proposition is the simple case of \cite[Proposition 11.9.]{DM93}
\begin{proposition} \label{kvforme}                                                                                                                                                                                                                                                                                                  Let $X$ be a finite dimensional vector space over the real numbers and $F:X \to [0,+\infty)$  an arbitrary function.
If
\begin{enumerate}[a)]
\item $F(0)=0$,
\item $F(tx) \leq t^2 F(x)$ for every $x \in X$ and for every $t>0$,
\item $F(x+y)+F(x-y) \leq 2 F(x)+2F(y)$ for every $x,y \in X$,                                                                                                                                                                                                                                                       \end{enumerate}
then $F$ is a quadratic form. Conversely if $F$ is a quadratic form then (a), (b), (c) are satisfied, and, in                                                                                                                                                                                                        addition,
\begin{enumerate}[a)]                                                                                                                                                                                                                                                                                                \setcounter{enumi}{3}
\item $F(tx)=t^2 F(x)$ for every $x \in X$ and for every $t \in \R$ with $t \neq 0$,
\item $F(x+y)+F(x-y)=2F(x)+2F(y)$, for every $x,y \in X$.
\end{enumerate}
\end{proposition}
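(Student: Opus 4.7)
The converse direction is immediate: if $F$ is a quadratic form, meaning $F(x)=B(x,x)$ for some symmetric bilinear form $B:X\times X\to\R$, then (a) is trivial, (d) holds by bilinearity and therefore (b) holds with equality, and (e) (the parallelogram identity) follows from expanding $B(x\pm y,x\pm y)$, which gives (c). So the real content is the forward implication: (a)+(b)+(c) $\Rightarrow$ $F$ is a quadratic form.

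The plan is to first upgrade the two inequalities in (b) and (c) to equalities, and then realize $F$ as $B(x,x)$ for a suitable symmetric bilinear form $B$. First, applying (b) to the pair $(tx,1/t)$ yields $F(x)=F(\tfrac{1}{t}(tx))\leq t^{-2}F(tx)\leq t^{-2}\cdot t^2 F(x)=F(x)$, so $F(tx)=t^2F(x)$ for all $t>0$. Next, specializing (c) to $y=-x$ together with (a) gives $F(2x)\leq 2F(x)+2F(-x)$, and since $F(2x)=4F(x)$ (by the previous step), we obtain $F(x)\leq F(-x)$; swapping roles gives $F(-x)=F(x)$, hence $F(tx)=t^2F(x)$ for every $t\in\R$, which is (d). Applying (c) to the pair $(x+y,x-y)$ and using (d) yields $F(2x)+F(2y)\leq 2F(x+y)+2F(x-y)$, i.e.\ $2F(x)+2F(y)\leq F(x+y)+F(x-y)$; combined with (c) this gives the parallelogram identity (e).

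Now define the candidate polarization
\[
 B(x,y):=\tfrac{1}{4}\bigl(F(x+y)-F(x-y)\bigr).
\]
Symmetry $B(x,y)=B(y,x)$ follows from $F(-z)=F(z)$, and $B(x,x)=\tfrac{1}{4}(F(2x)-F(0))=F(x)$ by (d) and (a). For additivity in the first argument, apply the parallelogram identity (e) to the pairs $(u,v)$ with $u=\tfrac{1}{2}(x_1+x_2)\pm y$, $v=\tfrac{1}{2}(x_1-x_2)$, which yields $F(x_1\pm y)+F(x_2\pm y)=2F(\tfrac{x_1+x_2}{2}\pm y)+2F(\tfrac{x_1-x_2}{2})$; subtracting the two signs gives $B(x_1,y)+B(x_2,y)=2B(\tfrac{x_1+x_2}{2},y)$, and setting $x_2=0$ (using $B(0,y)=0$ by $F(y)=F(-y)$) shows $B(\tfrac{x}{2},y)=\tfrac{1}{2}B(x,y)$, whence $B(x_1+x_2,y)=B(x_1,y)+B(x_2,y)$. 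Together with $F(qx)=q^2F(x)$ this gives $B(qx,y)=qB(x,y)$ for every $q\in\Q$ by the usual induction and division argument.

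The main obstacle, and the key use of the hypothesis $F\geq 0$, is the passage from $\Q$-homogeneity to $\R$-homogeneity in the absence of any a priori continuity of $F$. Here the trick is to derive a Cauchy--Schwarz inequality. For $t\in\Q$, $\Q$-bilinearity gives $F(x-ty)=F(x)-2tB(x,y)+t^2F(y)$, and the nonnegativity $F(x-ty)\geq 0$ forces the discriminant of this quadratic in $t$ to be nonpositive, so
\[
 B(x,y)^2\leq F(x)\,F(y)\qquad\text{for all }x,y\in X.
\]
Given $t\in\R$ and a sequence $t_n\in\Q$ with $t_n\to t$, using the additivity already established,
\[
 B(tx,y)-t_nB(x,y)=B(tx,y)-B(t_nx,y)=B\bigl((t-t_n)x,\,y\bigr),
\]
and the Cauchy--Schwarz bound together with $F((t-t_n)x)=(t-t_n)^2F(x)$ yields $|B((t-t_n)x,y)|\leq|t-t_n|\sqrt{F(x)F(y)}\to 0$. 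Hence $B(tx,y)=tB(x,y)$ for every $t\in\R$, so $B$ is $\R$-bilinear and $F(x)=B(x,x)$ is a genuine quadratic form. Properties (d) and (e) then follow from this representation as noted at the outset.
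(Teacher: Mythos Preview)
Your proof is correct and self-contained. The paper does not give a proof of this proposition at all; it merely states that it is a simple case of \cite[Proposition 11.9]{DM93} and cites it as a black box. So there is nothing to compare on the level of argument---you have supplied a complete Jordan--von Neumann style proof (upgrade (b) and (c) to equalities, polarize, use nonnegativity to get Cauchy--Schwarz and hence $\R$-linearity) where the paper simply defers to the literature.
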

If $\omega$ is a Lipschitz domain by $\mathcal{A}=\mathcal{A}(\omega)$ we denote the class of all open subsets of $\omega$; while by                                                                                                                                                                                                          $\mathcal{B}=\mathcal{B}(\omega)$ we denote the class of all Borel subsets of $\omega$.  By $\mathcal{A}_0$ we denote the class of all open sets of $\omega$ that are compactly contained in $\omega$.
The following definitions, Lemma and theorem can be found in \cite[Chapter 14]{DM93}.
\begin{definition}
For a function $\alpha:\mathcal{A} \to \overline{\R}$ we say that it is increasing if $\alpha(A) \leq \alpha(B)$, whenever $A,B \in \mathcal{A}$, $A \subset B$. We say that the increasing function
$\alpha: \mathcal{A} \to \overline{\R}$ is inner regular if
$$ \alpha(A)=\sup\{\alpha(B): B \in \mathcal{A},\ B \ll A \}.  $$
\end{definition}
\begin{definition}\label{defap00}
We say that a subset $\mathcal{D}$ of $\mathcal{A}$ is dense in $\mathcal{A}$ if for every $A,B \in \mathcal{A}$, with $A \ll B$, there exists $D \in \mathcal{D}$, such that $A \ll D \ll B$.
\end{definition}
\begin{remark}
If $\alpha:\mathcal{A} \to\overline{\R}$ is an increasing function and $\mathcal{D}$ is the dense subset
of $\mathcal{A}$ then we have that
$$\alpha(A)=\sup\{\alpha(D): D \in \mathcal{D}, \ D \ll A \}. $$
\end{remark}
\begin{definition}
Let $\alpha:\mathcal{A} \to \overline{\R}$ be non-negative increasing function. We say that
\begin{enumerate}[a)]
\item $\alpha$ is subadditive on $\mathcal{A}$ if $\alpha(A) \leq \alpha(A_1)+\alpha(A_2)$ for every $A,A_1,A_2 \in \mathcal{A}$ with $A \subset A_1 \cup A_2$;
\item $\alpha$ is superadditive on $\mathcal{A}$ if $\alpha(A) \geq \alpha(A_1)+\alpha(A_2)$ for every
$A,A_1,A_2 \in \mathcal{A}$ with $A_1 \cup A_2 \subset A$ and $A_1 \cap A_2 =\emptyset$;
\item $\alpha$ is a measure on $\mathcal{A}$ if there exists a Borel measure $\mu:\mathcal{B} \to [0,+\infty]$
such that $\alpha(A)= \mu(A)$ for every $A \in \mathcal{A}$.
\end{enumerate}
\end{definition}
The following is \cite[Lemma 14.20]{DM93}
\begin{lemma} \label{DMlemma}
Let $A,B,C \in \mathcal{A}$ with $C \ll A \cup B$. Then there exist $A', B' \in \mathcal{A}_0$ such that
$C \ll A' \cup B'$, $A' \ll A$, $B' \ll B$.
\end{lemma}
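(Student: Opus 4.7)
\textbf{Proof proposal for Lemma \ref{DMlemma}.} The plan is a direct compactness argument using the fact that $\bar{C}$ is a compact subset of $\R^n$ covered by the two open sets $A$ and $B$. The strategy will be to produce a finite cover of $\bar{C}$ by small closed balls, each of which is contained in $A$ or in $B$, and then to declare $A'$ and $B'$ to be the unions of the interiors of these balls, grouped by which of $A$, $B$ they sit in.

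First I would observe that $C \ll A \cup B$ gives $\bar{C} \subset A \cup B$, and since $C \subset \omega$ (bounded), $\bar{C}$ is compact in $\R^n$. For each $x \in \bar{C}$ we have $x \in A$ or $x \in B$; in either case, since $A, B$ are open subsets of the open bounded set $\omega$, one can choose an open ball $U_x$ centered at $x$ with $\bar{U_x}$ compact and
\begin{equation*}
\bar{U_x} \subset A \quad \text{or} \quad \bar{U_x} \subset B.
\end{equation*}
(If $x \in A \cap B$, either choice is admissible.) By compactness of $\bar{C}$, extract a finite subcover $U_{x_1}, \dots, U_{x_N}$. Partition the indices into $I_A := \{i : \bar{U_{x_i}} \subset A\}$ and $I_B := \{1,\dots,N\} \setminus I_A$ (so that the $B$-indices satisfy $\bar{U_{x_i}} \subset B$ by construction). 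Define
\begin{equation*}
A' := \bigcup_{i \in I_A} U_{x_i}, \qquad B' := \bigcup_{i \in I_B} U_{x_i}.
\end{equation*}

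The verifications are routine: $A'$ and $B'$ are open in $\omega$; the closures $\bar{A'} = \bigcup_{i \in I_A} \bar{U_{x_i}}$ and $\bar{B'} = \bigcup_{i \in I_B} \bar{U_{x_i}}$ are finite unions of compact sets, hence compact, and they lie in $A$, respectively $B$, which shows both $A' \ll A$, $B' \ll B$ and (since $A \cup B \subset \omega$) that $\bar{A'}, \bar{B'} \subset \omega$, i.e.\ $A', B' \in \mathcal A_0$. Finally $\bar{C} \subset \bigcup_{i=1}^N U_{x_i} = A' \cup B'$, and $\bar{C}$ is compact while $A' \cup B'$ is open, so $C \ll A' \cup B'$.

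There is no genuine obstacle here; the only point that needs care is to make sure the balls $U_{x}$ are chosen small enough that their closures sit \emph{inside} $A$ or $B$ (not just meet them), so that $\bar{A'} \subset A$ and $\bar{B'} \subset B$ hold rather than just $\bar{A'} \subset \bar{A}$, etc. This is guaranteed by choosing each $U_x$ as an open ball of radius strictly less than $\dist(x, \partial A)$ or $\dist(x, \partial B)$, which is positive because $x$ lies in the open set $A$ or $B$.
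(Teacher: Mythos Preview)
Your argument is correct and is the standard compactness proof of this fact. The paper itself does not supply a proof but simply cites the result as \cite[Lemma 14.20]{DM93}; your write-up fills in exactly the elementary argument one expects there.
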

The following is Theorem \cite[Theorem 14.23]{DM93}.
\begin{theorem} \label{DMtm}
Let $\alpha:\mathcal{A} \to [0,+\infty]$ be a non-negative increasing function such that $\alpha(\emptyset)=0$. The following conditions are equivalent.
\begin{enumerate}[(i)]
\item $\alpha$ is a measure on $\mathcal{A}$; \item $\alpha$ is subadditive, superadditive and inner regular on $\mathcal{A}$.
\end{enumerate}
\end{theorem}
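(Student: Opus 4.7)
The statement is the classical De Giorgi--Letta measure criterion; my plan is to prove the two implications separately, with (ii)$\Rightarrow$(i) being the substantive direction.

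For (i)$\Rightarrow$(ii), if $\alpha$ is the restriction to $\mathcal{A}$ of a Borel measure $\mu$, then superadditivity and subadditivity on open sets are immediate from finite additivity and subadditivity of $\mu$. For inner regularity, I would invoke the standard Borel regularity on the ambient domain $\omega\subset\R^2$: for every open $A\subset \omega$ one has $\mu(A)=\sup\{\mu(K):K\subset A,\,K\text{ compact}\}$, and each such compact $K$ may be enclosed in some open $B$ with $K\subset B\ll A$, so that $\alpha(A)=\sup\{\alpha(B):B\in\mathcal{A},\,B\ll A\}$.

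For the converse (ii)$\Rightarrow$(i), the plan is to run a Carathéodory construction. Define
\begin{equation*}
\alpha^{*}(E):=\inf\{\alpha(A):A\in\mathcal{A},\,E\subset A\},\qquad E\subset\omega.
\end{equation*}
I would verify in order: (a) $\alpha^{*}$ extends $\alpha$ on $\mathcal{A}$ (immediate from monotonicity); (b) $\alpha^{*}$ is monotone and $\alpha^{*}(\emptyset)=0$; (c) $\alpha^{*}$ is countably subadditive --- here I use subadditivity of $\alpha$ plus inner regularity: given $E\subset\bigcup_n E_n$ and coverings $E_n\subset A_n$ with $\alpha(A_n)\leq \alpha^{*}(E_n)+\eps 2^{-n}$, for each $C\ll\bigcup_n A_n$ compactness reduces the cover to finitely many $A_{n_1},\dots,A_{n_k}$, and iterated application of subadditivity of $\alpha$ on the finite union together with the inner regularity representation $\alpha(\bigcup_n A_n)=\sup_{C\ll\bigcup_n A_n}\alpha(C)$ gives the desired bound $\sum_n\alpha(A_n)+\eps$; (d) every open set $U\in\mathcal{A}$ is $\alpha^{*}$-measurable. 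Then by Carathéodory the $\alpha^{*}$-measurable sets form a $\sigma$-algebra containing the Borel $\sigma$-algebra, and the restriction of $\alpha^{*}$ is the required Borel measure $\mu$ with $\mu(A)=\alpha(A)$ for all $A\in\mathcal{A}$.

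The main obstacle is step (d), the Carathéodory condition $\alpha^{*}(E)\geq\alpha^{*}(E\cap U)+\alpha^{*}(E\setminus U)$ for an arbitrary open $U$ and an arbitrary test set $E$. By the definition of $\alpha^{*}$ it suffices to prove, for every $A\in\mathcal{A}$ containing $E$, the inequality $\alpha(A)\geq \alpha^{*}(A\cap U)+\alpha^{*}(A\setminus U)$. Since $A\setminus U$ is only closed, I would approximate it from outside by open sets $V\in\mathcal{A}$ with $A\setminus U\subset V$, and use inner regularity to write $\alpha(A)=\sup\{\alpha(C):C\in\mathcal{A},\,C\ll A\}$. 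For a fixed $C\ll A$ one has $C\ll (A\cap U)\cup V$, and Lemma \ref{DMlemma} yields $A'\ll A\cap U$ and $B'\ll V$ with $C\ll A'\cup B'$. Now superadditivity applied to the disjoint open sets $A'$ and $B'$ (which sit inside an open $A''\supset A'\cup B'$, $A''\subset A$) combined with subadditivity gives $\alpha(C)\leq \alpha(A')+\alpha(B')\leq \alpha^{*}(A\cap U)+\alpha(V)$. Taking the infimum over $V$ and then the supremum over $C\ll A$ yields the Carathéodory inequality. The delicate point here is the consistent interplay of Lemma \ref{DMlemma}, superadditivity on disjoint open sets, and inner regularity; this is precisely why all three hypotheses in (ii) are needed.
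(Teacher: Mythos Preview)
The paper does not prove Theorem~\ref{DMtm}; it simply records it as Theorem~14.23 of Dal~Maso's book and uses it as a black box. So there is no ``paper's proof'' to compare against beyond noting that your outline follows the standard De~Giorgi--Letta/Carath\'eodory route, which is indeed how the result is established in that reference.

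Your steps (a)--(c) are fine. Step~(d), however, has a genuine gap. Two problems: first, the sets $A'\ll A\cap U$ and $B'\ll V$ produced by Lemma~\ref{DMlemma} have no reason to be disjoint (nothing prevents $V$ from overlapping $A\cap U$), so you cannot invoke superadditivity on them. Second, and more seriously, the chain of inequalities you actually write,
\[
\alpha(C)\leq \alpha(A')+\alpha(B')\leq \alpha^{*}(A\cap U)+\alpha(V),
\]
after taking $\sup_{C\ll A}$ and $\inf_V$, yields $\alpha(A)\leq \alpha^{*}(A\cap U)+\alpha^{*}(A\setminus U)$. That is the \emph{wrong direction}: it is just subadditivity of $\alpha^{*}$, which you already have from~(c). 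The Carath\'eodory condition requires the opposite inequality $\alpha(A)\geq \alpha^{*}(A\cap U)+\alpha^{*}(A\setminus U)$, and superadditivity must enter as a \emph{lower} bound on $\alpha(A)$, not an upper bound on $\alpha(C)$.

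The correct argument bypasses Lemma~\ref{DMlemma} entirely. Fix $A'\ll A\cap U$; then $\bar{A'}\subset U$, so $\bar{A'}$ and $A\setminus U$ are disjoint. Set $B':=A\setminus \bar{A'}$, which is open, disjoint from $A'$, contained in $A$, and contains $A\setminus U$. Superadditivity now legitimately gives $\alpha(A)\geq \alpha(A')+\alpha(B')\geq \alpha(A')+\alpha^{*}(A\setminus U)$. Taking the supremum over all $A'\ll A\cap U$ and using inner regularity of $\alpha$ yields $\alpha(A)\geq \alpha(A\cap U)+\alpha^{*}(A\setminus U)$, which is the required inequality.
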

\begin{remark}
It can be seen that the measure $\mu$ which extends $\alpha$ is given by
$$ \mu(E)=\inf\{ \alpha(A): A \in \mathcal{A}, \ E \subset A \}. $$
\end{remark}

We give a simple lemma about the sets with Lipschitz boundary.
\begin{lemma}\label{zzzadnje}
Let $A \subset \R^n$ be an open, bounded set with Lipschitz boundary. Then $A$ has finite number of connected components.
\end{lemma}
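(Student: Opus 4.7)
The plan is to use the Lipschitz structure to obtain a finite cover of $\partial A$ by balls whose intersections with $A$ are connected, and then show that each connected component of $A$ must ``contain'' at least one such ball, giving a finite upper bound.

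First, I would use the definition of Lipschitz boundary: for every $x \in \partial A$ there exists a radius $r_x>0$, a rotation, and a Lipschitz function $\phi_x$ such that in the rotated coordinates $A \cap B(x,r_x) = \{y \in B(x,r_x) : y_n > \phi_x(y_1,\ldots,y_{n-1})\}$. In particular, $A \cap B(x,r_x)$ is connected (it is the intersection of an open ball with the strict epigraph of a continuous function, both convex in the vertical direction, hence path-connected by vertical segments followed by horizontal paths in the graph). Since $A$ is bounded, $\partial A$ is compact, and I extract a finite subcover $B(x_1,r_1),\ldots,B(x_N,r_N)$ of $\partial A$.

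Second, let $C$ be any connected component of $A$. Since $A$ is open, $C$ is open; and since $A$ is bounded and nonempty, $C$ is bounded and nonempty, so $\partial C \neq \emptyset$ (otherwise $C$ would be both open and closed in $\R^n$, forcing $C=\R^n$). A standard argument shows $\partial C \subset \partial A$: any $y \in \partial C$ lies in $\overline{A}$; if $y \in A$ then openness of $A$ would give a ball around $y$ contained in $A$, and adjoining it to $C$ would yield a larger connected subset of $A$, contradicting maximality of $C$. Hence $\partial C \subset \partial A \subset \bigcup_{i=1}^N B(x_i,r_i)$, so some $B(x_i,r_i)$ contains a point of $\partial C$, and in particular $B(x_i,r_i) \cap C \neq \emptyset$.

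Third, $A \cap B(x_i,r_i)$ is connected and meets $C$, so by maximality of the component $C$ we have $A \cap B(x_i,r_i) \subset C$. Assigning to each component $C$ such an index $i(C) \in \{1,\ldots,N\}$ gives a map from the set of components of $A$ to $\{1,\ldots,N\}$. This map is injective: if two distinct components $C \ne C'$ shared an index $i$, then the nonempty set $A \cap B(x_i,r_i)$ would lie in both $C$ and $C'$, contradicting $C \cap C' = \emptyset$. Hence the number of components is at most $N$, which proves the lemma. The only subtlety in the argument is verifying that the Lipschitz-graph representation yields a connected local slice $A \cap B(x,r_x)$, but this follows immediately from the graph structure.
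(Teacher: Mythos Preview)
Your covering strategy is sound and is different from the paper's approach: the paper first proves that $\partial A$ has finitely many connected components (by a compactness--contradiction argument) and then shows that distinct components of $A$ have disjoint boundaries, each a union of boundary components. Your argument is more direct and gives an explicit bound on the number of components (the number of charts in a finite cover).

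There is, however, a real gap at the step you yourself flag as ``the only subtlety.'' With the epigraph form of the Lipschitz-boundary definition you use, the set $A\cap B(x,r_x)=\{y\in B(x,r_x):y_n>\phi_x(y')\}$ need \emph{not} be connected for the radius $r_x$ supplied by the definition. Your justification (``vertical segments followed by horizontal paths'') breaks down when the graph of $\phi_x$ rises above the upper hemisphere of $B(x,r_x)$: then some vertical slices of $A\cap B(x,r_x)$ are empty and the set can split into two pieces. A concrete planar example: set $\phi(t)=0$ for $t\le 0$ and $t\ge 2/L$, $\phi(t)=Lt$ on $[0,1/L]$, $\phi(t)=2-Lt$ on $[1/L,2/L]$. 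For $L$ large the peak value $\phi(1/L)=1$ exceeds $\sqrt{1-1/L^2}$, so the projection of $\{y\in B(0,1):y_2>\phi(y_1)\}$ onto the $y_1$-axis is a disjoint union of two intervals, and the set is disconnected.

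The repair is painless and leaves the rest of your argument intact: replace balls by cylinders $C_x=B'(x',\rho)\times(x_n-h,x_n+h)$ with $h>L_x\rho$, so that the graph of $\phi_x$ lies strictly inside $C_x$. Then $A\cap C_x=\{(y',y_n):|y'-x'|<\rho,\ \phi_x(y')<y_n<x_n+h\}$ is the region between two continuous graphs over a ball and is trivially path-connected (go up to a fixed height just below $x_n+h$, move horizontally, go down). Equivalently, invoke the bi-Lipschitz chart formulation of Lipschitz boundary (the one the paper actually uses in its proof), under which $A$ near $x$ is the homeomorphic image of an open half-ball and hence connected. With either modification your Steps~2 and~3 go through verbatim.
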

\begin{proof}
Denote by $\{\Gamma_\alpha\}_{\alpha \in \Lambda}$ the connected components of $\partial A$.
We want to prove that there is only finitely many such components.
Suppose that there is infinite many such components. Then for each $n \in \N$ we can find $x_n \in \Gamma_n$, where $\Gamma_i \neq \Gamma_j$, for all $i \neq j$. Since $\partial A$ is compact we have that at least on a subsequence, still denoted by $(x_n)_{n \in \N}$ $x_n \to x \in \partial A$. Since $A$ has Lipschitz boundary we can find a Lipschitz frame around point $x \in \partial A$, with radius $\eps>0$. This means that there exists a
 bijective map $f_x: B(x,\eps) \to B(0,1)$ such that $f_x$ and $f_x^{-1}$ are Lipschitz continuous and such that
$f_x \left(\partial A \cap B(x,\eps)\right)=B(0,1) \cap \{x_n=0\}$ and $A\cap B(x,\eps)=B(0,1) \cap \{x_n>0\}$.  This contradicts the fact that $x_n \to x$ and that $x_n$ belong to different connected components of $\partial A$. Thus we have proved that $\partial A$ has finitely many connected components. Take now all the connected components of the set $A$ and denote them by $\{A_\alpha\}_{\alpha \in \Lambda}$.
Using that $A$ has Lipschitz boundary it is easy to see that $\partial A \subset \cup_{\alpha \in \Lambda}\partial A_{\alpha}$. Then it is easy to check that $\partial A = \cup_{\alpha \in \Lambda}\partial A_{\alpha}$. Moreover it is easy to see that for $\alpha \neq  \beta$, $\partial A_\alpha \cap \partial A_\beta =\emptyset$. Namely, if there is $x  \in \partial A_\alpha \cap \partial A_\beta$, then by taking Lipschitz frame around $x$, it can be seen that $A_\alpha$ and $A_\beta$ would be connected. Also it is easy to see that every connected component of the boundary can be part of the boundary at most one of the connected component of $A$. This implies that there can be only finitely many connected components of $A$ and that they have disjoint closure.
\end{proof}

{\bf Acknowledgement.}
The work on this paper was supported by Deutsche Forschungsgemeinschaft grant no. HO-4697/1-1 and also partially by Croatian Science Foundation grant number 9477.
\\
\bibliographystyle{alpha}

\begin{thebibliography}{FMP98}

\bibitem[BB06]{Babadjian-Baia-06}
Jean-Fran{\c{c}}ois Babadjian and Margarida Ba{\'{\i}}a.
\newblock 3{D}--2{D} analysis of a thin film with periodic microstructure.
\newblock {\em Proc. Roy. Soc. Edinburgh Sect. A}, 136(2):223--243, 2006.

\bibitem[BF02]{BoceaFon02}
Marian Bocea and Irene Fonseca.
\newblock Equi-integrability results for 3{D}-2{D} dimension reduction
  problems.
\newblock {\em ESAIM Control Optim. Calc. Var.}, 7:443--470, 2002.

\bibitem[BFF00]{Braides-Fonseca-Francfort-00}
Andrea Braides, Irene Fonseca, and Gilles Francfort.
\newblock 3{D}-2{D} asymptotic analysis for inhomogeneous thin films.
\newblock {\em Indiana Univ. Math. J.}, 49(4):1367--1404, 2000.

\bibitem[Bra02]{Braidesgamma02}
Andrea Braides.
\newblock {\em {$\Gamma$}-convergence for beginners}, volume~22 of {\em Oxford
  Lecture Series in Mathematics and its Applications}.
\newblock Oxford University Press, Oxford, 2002.

\bibitem[BZ07]{BraidesZeppieri07}
Andrea Braides and Caterina~Ida Zeppieri.
\newblock A note on equi-integrability in dimension reduction problems.
\newblock {\em Calc. Var. Partial Differential Equations}, 29(2):231--238,
  2007.

\bibitem[Cia97]{Ciarletplates}
Philippe~G. Ciarlet.
\newblock {\em Mathematical elasticity. {V}ol. {II}}, volume~27 of {\em Studies
  in Mathematics and its Applications}.
\newblock North-Holland Publishing Co., Amsterdam, 1997.
\newblock Theory of plates.

\bibitem[CM04]{Courilleau04}
P.~Courilleau and J.~Mossino.
\newblock Compensated compactness for nonlinear homogenization and reduction of
  dimension.
\newblock {\em Calc. Var. Partial Differential Equations}, 20(1):65--91, 2004.

\bibitem[DM93]{DM93}
Gianni Dal~Maso.
\newblock {\em An introduction to {$\Gamma$}-convergence}.
\newblock Progress in Nonlinear Differential Equations and their Applications,
  8. Birkh\"auser Boston Inc., Boston, MA, 1993.

\bibitem[EG92]{Evans92}
Lawrence~C. Evans and Ronald~F. Gariepy.
\newblock {\em Measure theory and fine properties of functions}.
\newblock Studies in Advanced Mathematics. CRC Press, Boca Raton, FL, 1992.

\bibitem[FJM02]{FJM-02}
Gero Friesecke, Richard~D. James, and Stefan M{\"u}ller.
\newblock A theorem on geometric rigidity and the derivation of nonlinear plate
  theory from three-dimensional elasticity.
\newblock {\em Comm. Pure Appl. Math.}, 55(11):1461--1506, 2002.

\bibitem[FJM06]{FJM-06}
Gero Friesecke, Richard~D. James, and Stefan M{\"u}ller.
\newblock A hierarchy of plate models derived from nonlinear elasticity by
  gamma-convergence.
\newblock {\em Arch. Ration. Mech. Anal.}, 180(2):183--236, 2006.

\bibitem[FMP98]{FoMuPe98}
Irene Fonseca, Stefan M{\"u}ller, and Pablo Pedregal.
\newblock Analysis of concentration and oscillation effects generated by
  gradients.
\newblock {\em SIAM J. Math. Anal.}, 29(3):736--756 (electronic), 1998.

\bibitem[GM06]{gustafsson06}
Bj{\"o}rn Gustafsson and Jacqueline Mossino.
\newblock Compensated compactness for homogenization and reduction of
  dimension: the case of elastic laminates.
\newblock {\em Asymptot. Anal.}, 47(1-2):139--169, 2006.

\bibitem[Gri05]{griso05}
Georges Griso.
\newblock Asymptotic behavior of structures made of plates.
\newblock {\em Anal. Appl. (Singap.)}, 3(4):325--356, 2005.

\bibitem[HNV14]{Horneuvel12}
Peter Hornung, Stefan Neukamm, and Igor Vel\v{c}i\'{c}.
\newblock Derivation of a homogenized nonlinear plate theory from 3d
  elasticity.
\newblock {\em Calculus of Variations and Partial Differential Equations},
  pages 1--23, 2014.

\bibitem[HV14]{Hornungvel12}
Peter Hornung and Igor Vel\v{c}i\'{c}.
\newblock Derivation of a homogenized von-k\'arm\'an shell theory from 3d
  elasticity.
\newblock {\em Annales de l'Institut Henri Poincare (C) Non Linear Analysis},
  (0):--, 2014.

\bibitem[LDR95]{LDRa95}
Herv{\'e} Le~Dret and Annie Raoult.
\newblock The nonlinear membrane model as variational limit of nonlinear
  three-dimensional elasticity.
\newblock {\em J. Math. Pures Appl. (9)}, 74(6):549--578, 1995.

\bibitem[MT97]{TarMur97}
Fran{\c{c}}ois Murat and Luc Tartar.
\newblock {$H$}-convergence.
\newblock In {\em Topics in the mathematical modelling of composite materials},
  volume~31 of {\em Progr. Nonlinear Differential Equations Appl.}, pages
  21--43. Birkh\"auser Boston, Boston, MA, 1997.

\bibitem[MV]{MarVel-14}
Marohni\'{c} Maroje and Igor Vel{\v{c}}i{\'{c}}.
\newblock General homogenization of bending-torsion theory for inextensible
  rods from 3d elasticity.
\newblock {\em http://arxiv.org/pdf/1402.4514v1.pdf}.

\bibitem[Neu12]{Neukamm-11}
Stefan Neukamm.
\newblock Rigorous derivation of a homogenized bending-torsion theory for
  inextensible rods from three-dimensional elasticity.
\newblock {\em Arch. Ration. Mech. Anal.}, 206(2):645--706, 2012.

\bibitem[NV13]{NeuVel-12}
Stefan Neukamm and Igor Vel{\v{c}}i{\'{c}}.
\newblock Derivation of a homogenized von {K}\'arm\'an plate theory from 3{D}
  elasticity.
\newblock {\em M3AS}, 23(14):2701--2748, 2013.

\bibitem[Ste70]{Stein70}
Elias~M. Stein.
\newblock {\em Singular integrals and differentiability properties of
  functions}.
\newblock Princeton Mathematical Series, No. 30. Princeton University Press,
  Princeton, N.J., 1970.

\bibitem[SW94]{SW94}
Peter Shi and Steve Wright.
\newblock Higher integrability of the gradient in linear elasticity.
\newblock {\em Math. Ann.}, 299(3):435--448, 1994.

\bibitem[Vel13]{Velcic-12}
Igor Vel{\v{c}}i{\'{c}}.
\newblock Periodically wrinkled plate of {F}{\"o}ppl von {K}\'arm\'an type.
\newblock {\em Ann. Sc. Norm. Super. Pisa Cl. Sci.(5)}, 12(2):275--307, 2013.

\bibitem[Vel14]{Vel13}
Igor Vel\v{c}i\'{c}.
\newblock On the derivation of homogenized bending plate model.
\newblock {\em Calculus of Variations and Partial Differential Equations},
  pages 1--26, 2014.

\end{thebibliography}

\end{document}